\documentclass[12pt]{amsart}

\usepackage{amssymb,color}

\textwidth=6.3in 
\textheight=8.35in 
\topmargin=0.5cm
\oddsidemargin=0.5cm
\evensidemargin=0.5cm

\newtheorem{theorem}{Theorem}[section]

\newtheorem{lemma}[theorem]{Lemma}
\newtheorem{corollary}[theorem]{Corollary}
\newtheorem{conjecture}[theorem]{Conjecture}

\theoremstyle{definition}

\newtheorem{remark}[theorem]{Remark}

\newtheorem*{remark2}{Remark}


\newcommand{\ZZ}{ \ensuremath{\mathbb{Z}}}

\newcommand{\FF}{ \ensuremath{\mathbb{F}}}

\newcommand{\depth}{{\mathop{\mathrm{depth}}}}

\newcommand{\Ker}{\mathop{\mathrm{Ker}}}
\newcommand{\Image}{\mathop{\mathrm{Im}}}

\newcommand{\lk}{{\mathrm{lk}}}
\newcommand{\st}{\mathrm{st}}
\newcommand{\sd}{\mathrm{sd}}

\newcommand{\mideal}{\ensuremath{\mathfrak{m}}}
\newcommand{\field}{\ensuremath{\mathbb F}}

\def\cocoa{{\hbox{\rm C\kern-.13em o\kern-.07em C\kern-.13em o\kern-.15em A}}}

\newcommand{\ddd}{\delta} 

\begin{document}

\title[A Duality in Buchsbaum rings and triangulated manifolds]{A Duality in Buchsbaum rings and \\ triangulated manifolds}

\author{Satoshi Murai}\thanks{Murai's research is partially
supported by Grant-in-Aid for Scientific Research (C) 25400043}
\address{
Satoshi Murai,
Department of Pure and Applied Mathematics,
Graduate School of Information Science and Technology,
Osaka University,
Suita, Osaka, 565-0871, Japan}
\email{s-murai@ist.osaka-u.ac.jp
}

\author{Isabella Novik}\thanks{Novik's research is partially
supported by NSF grant DMS-1361423}
\address{
Isabella Novik,
Department of Mathematics,
University of Washington,
Seattle, WA 98195-4350, USA.
}
\email{novik@math.washington.edu}

\author{Ken-ichi Yoshida}\thanks{Yoshida's research is partially 
supported by Grant-in-Aid for Scientific Research (C) 25400050}
\address{Ken-ichi Yoshida, Department of Mathematics, College of Humanities and Sciences,
Nihon University, Setagaya-ku, Tokyo, 156-8550, Japan
}
\email{yoshida@math.chs.nihon-u.ac.jp}


\begin{abstract}
Let $\Delta$ be a triangulated homology ball whose boundary complex is $\partial\Delta$. A result of Hochster asserts that the canonical module of the Stanley--Reisner ring of $\Delta$, $\FF[\Delta]$,  is isomorphic to the Stanley--Reisner module of the pair $(\Delta, \partial\Delta)$,  $\field[\Delta,\partial \Delta]$. This result implies that an  Artinian reduction of $\FF[\Delta,\partial \Delta]$ is (up to a shift in grading) isomorphic to the Matlis dual of the corresponding Artinian reduction of $\FF[\Delta]$. We establish a generalization of this duality to all triangulations of connected orientable homology manifolds with boundary.
We also provide an explicit algebraic interpretation of the $h''$-numbers of Buchsbaum complexes and use it to prove the monotonicity of $h''$-numbers for pairs of Buchsbaum complexes as well as the unimodality of $h''$-vectors of barycentric subdivisions of Buchsbaum polyhedral complexes. We close with applications to the algebraic manifold $g$-conjecture.
\end{abstract}
\maketitle
\section{Introduction}

In this paper, we study an algebraic duality of Stanley--Reisner rings of triangulated homology manifolds with non-empty boundary.
Our starting point is the following (unpublished) result of Hochster --- see \cite[Ch.~II, \S 7]{St}. (We defer most of definitions until later sections.) Let $\Delta$ be a triangulated $(d-1)$-dimensional homology ball whose boundary complex is $\partial\Delta$, let $\FF[\Delta]$ be the Stanley--Reisner ring of $\Delta$, and let $\FF[\Delta,\partial\Delta]$ be the Stanley--Reisner module of $(\Delta,\partial\Delta)$.
(Throughout the paper $\field$ denotes an infinite field.) 
Hochster's result asserts that the canonical module $\omega_{\FF[\Delta]}$ of $\field[\Delta]$ is isomorphic to $\FF[\Delta,\partial\Delta]$. In the last decade or so, this result had a lot of impact on the study of face numbers of simplicial complexes, especially in connection with the $g$-conjecture for spheres, see, for instance, the proof of Theorem 3.1 in a recent survey paper by Swartz \cite{Sw}. 

One numerical consequence of Hochster's result is the following symmetry of $h$-numbers of homology balls: $h_i(\Delta, \partial\Delta)=h_{d-i}(\Delta)$. The $h$-numbers are certain linear combinations of the face numbers; they are usually arranged in a vector called the $h$-vector. In fact, Hochster's result implies a  stronger statement:
it implies that there is an isomorphism
\begin{align}
\label{ball}
\field[\Delta,\partial \Delta]/ \Theta \field[\Delta,\partial \Delta]
\cong \big(\field[\Delta]/\Theta \field[\Delta]\big)^\vee (-d),
\end{align}
where $\Theta$ is a linear system of parameters for $\field[\Delta]$
and $N^\vee$ is the (graded) Matlis dual of $N$ (see e.g.\ \cite[Lemma 3.6]{MY}). As the $\FF$-dimensions of the $i$th graded components of modules in \eqref{ball} are equal to $h_i(\Delta,\partial \Delta)$ and $h_{d-i}(\Delta)$, respectively, the above-mentioned symmetry,  $h_i(\Delta, \partial\Delta)=h_{d-i}(\Delta)$, follows.

Hochster's result was generalized to homology manifolds with boundary by Gr\"abe \cite{Gr84}.
To state Gr\"abe's result, we recall the definition of homology manifolds.
We denote by $\mathbb{S}^d$ and $\mathbb B^d$ the $d$-dimensional sphere and ball, respectively. A pure $d$-dimensional simplicial complex $\Delta$ is an \textit{$\field$-homology $d$-manifold without boundary} if the link of each nonempty face $\tau$ of $\Delta$ has the homology of  $\mathbb{S}^{d-|\tau|}$ (over $\field$) .
An \textit{$\field$-homology $d$-manifold with boundary} is a pure $d$-dimensional simplicial complex $\Delta$ such that (i) the link of each nonempty face $\tau$ of $\Delta$ has the homology of either $\mathbb{S}^{d-|\tau|}$ or $\mathbb{B}^{d-|\tau|}$, and (ii) the set of all \textit{boundary faces}, that is,
\[
\partial\Delta:= \left\{\tau\in\Delta \ : \ 
\mbox{the link of $\tau$ has the same homology as $\mathbb{B}^{d-|\tau|}$}
\right\} \cup \{\emptyset\}
\]
is a $(d-1)$-dimensional $\field$-homology manifold without boundary. 
A connected  $\field$-homology $d$-manifold with boundary is said to be \textit{orientable} if the top homology  $\widetilde {H}_d(\Delta,\partial\Delta)$ is isomorphic to $\field$.

Gr\"abe \cite{Gr84} proved that if $\Delta$ is an orientable homology manifold with boundary, then $\field[\Delta,\partial \Delta]$ is the canonical module of $\field [\Delta]$.
Gr\"abe also established a symmetry of $h$-numbers for such a $\Delta$ (see \cite{Gr87}). While Gr\"abe's original statement of symmetry is somewhat complicated, it was recently observed by the first two authors \cite{MN} that it takes the following simple form when expressed in the language of $h''$-numbers.

\begin{theorem}
\label{MuraiNovik}
Let $\Delta$ be a connected orientable $\field$-homology $(d-1)$-manifold with non-empty boundary $\partial \Delta$. Then $h_i''(\Delta,\partial \Delta)=h_{d-i}''(\Delta)$ for all $i=0,1,\dots,d$.
\end{theorem}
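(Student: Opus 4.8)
The plan is to reduce the theorem to a module-theoretic duality that generalizes the isomorphism \eqref{ball}, together with an algebraic description of the $h''$-numbers. Recall that for a $(d-1)$-dimensional Buchsbaum complex $\Gamma$ and a sufficiently generic linear system of parameters $\Theta$ for $\field[\Gamma]$, the Artinian reduction $\field[\Gamma]/\Theta\field[\Gamma]$ carries a canonical graded submodule $\Sigma_\Gamma$, supported in degrees $1,\dots,d-1$ inside the socle, that is assembled out of the finite-length local cohomology modules $H^j_\mideal(\field[\Gamma])$ with $j<d$; writing $\overline{\field[\Gamma]}:=\big(\field[\Gamma]/\Theta\field[\Gamma]\big)/\Sigma_\Gamma$ one has $\dim_\field\overline{\field[\Gamma]}_i=h_i''(\Gamma)$ for all $i$. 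This explicit algebraic interpretation of the $h''$-numbers is one of the things the paper develops, and the very same construction applies verbatim to the relative Stanley--Reisner module $\field[\Delta,\partial\Delta]$, which is again Buchsbaum of Krull dimension $d$: it produces a canonical submodule $\Sigma_{(\Delta,\partial\Delta)}$ and a module $\overline{\field[\Delta,\partial\Delta]}$ with $\dim_\field\overline{\field[\Delta,\partial\Delta]}_i=h_i''(\Delta,\partial\Delta)$. Granting this, the theorem follows once one produces a grading-reversing isomorphism
\begin{equation}\label{plandual}
\overline{\field[\Delta,\partial\Delta]}\ \cong\ \big(\overline{\field[\Delta]}\big)^\vee(-d),
\end{equation}
since taking $\field$-dimensions in degree $i$ on both sides of \eqref{plandual} yields exactly $h_i''(\Delta,\partial\Delta)=h_{d-i}''(\Delta)$ for every $i=0,1,\dots,d$.

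To prove \eqref{plandual} I would start from Gr\"abe's theorem: since $\Delta$ is a connected orientable homology manifold with boundary, $\field[\Delta,\partial\Delta]\cong\omega_{\field[\Delta]}$ as graded $\field[\Delta]$-modules, hence $\field[\Delta,\partial\Delta]/\Theta\field[\Delta,\partial\Delta]\cong\omega_{\field[\Delta]}/\Theta\omega_{\field[\Delta]}$. By graded local duality $\omega_{\field[\Delta]}=H^d_\mideal(\field[\Delta])^\vee$, and the two modules $\omega_{\field[\Delta]}/\Theta\omega_{\field[\Delta]}$ and $\big(\field[\Delta]/\Theta\field[\Delta]\big)^\vee(-d)$ coincide when $\field[\Delta]$ is Cohen--Macaulay --- this is precisely \eqref{ball} in the ball case --- but in the Buchsbaum case they differ by finite-length modules built from the $H^j_\mideal(\field[\Delta])$ with $j<d$. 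Hochster's formula pins these down: $H^j_\mideal(\field[\Delta])$ is concentrated in degree $0$ with $\dim_\field H^j_\mideal(\field[\Delta])_0=\dim_\field\widetilde H_{j-1}(\Delta;\field)$, and the relative version of Hochster's formula does the same for $\field[\Delta,\partial\Delta]$, with $\widetilde H_{j-1}(\Delta,\partial\Delta;\field)$ in place of $\widetilde H_{j-1}(\Delta;\field)$. Orientability now enters decisively: Lefschetz duality identifies $\widetilde H_{j-1}(\Delta,\partial\Delta;\field)$ with $H^{d-j}(\Delta;\field)\cong H_{d-j}(\Delta;\field)$, and the hypothesis $\widetilde H_{d-1}(\Delta,\partial\Delta;\field)\cong\field$ pins down the top local cohomology; together these force $\field[\Delta]$ and $\field[\Delta,\partial\Delta]$ to be each other's canonical modules and their lower local cohomology modules to be exchanged under $j\leftrightarrow d+1-j$ and Matlis duality. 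Consequently the canonical submodules $\Sigma_\Delta$ and $\Sigma_{(\Delta,\partial\Delta)}$, which by design absorb exactly these finite-length discrepancies, correspond to one another under the duality, and passing to the quotients $\overline{\field[\Delta]}$, $\overline{\field[\Delta,\partial\Delta]}$ upgrades the near-isomorphism to the honest isomorphism \eqref{plandual}.

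I expect the second paragraph to be the main obstacle, and the difficulty is twofold. First, one must construct $\Sigma_\Gamma$ intrinsically enough --- independently of the choice of $\Theta$ and of any auxiliary identification --- that the Gr\"abe isomorphism is forced to carry $\Sigma_{(\Delta,\partial\Delta)}$ onto the Matlis dual of $\Sigma_\Delta$. Second, because $\field[\Delta]$ is only Buchsbaum and not Cohen--Macaulay, $\Theta$ is a zerodivisor on both $\field[\Delta]$ and $\omega_{\field[\Delta]}$, so one cannot appeal to the Cohen--Macaulay fact that Artinian reduction commutes with passing to the canonical module; the finite-length error terms --- governed, via Hochster's and Lefschetz's formulas, by the Betti numbers of $\Delta$ and of the pair $(\Delta,\partial\Delta)$ --- have to be tracked explicitly, and it is exactly orientability that makes them cancel once one passes to $\overline{(-)}$. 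Finally, I note that there is a more computational route which bypasses \eqref{plandual}: evaluate both sides directly, combining Schenzel's formula for $\dim_\field\big(\field[\Delta,\partial\Delta]/\Theta\field[\Delta,\partial\Delta]\big)_i$, Lefschetz duality, and the classical Euler-type relation tying the $h$-numbers of $\Delta$, of $\partial\Delta$ and of $(\Delta,\partial\Delta)$ --- but this reproves Gr\"abe's symmetry by bookkeeping rather than explaining it, so I would favor the module-theoretic argument above.
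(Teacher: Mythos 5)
Your overall architecture does match the paper's: interpret the $h''$-numbers as the Hilbert function of a quotient of the Artinian reduction by a socle submodule built from the lower local cohomology (the paper's Theorem 1.2), invoke Gr\"abe's identification $\field[\Delta,\partial\Delta]\cong\omega_{\field[\Delta]}$, and prove a Matlis duality between the two resulting quotients (Corollary 1.5 via Theorem 1.4). But the proposal stops precisely at the step you yourself flag as the crux: the duality isomorphism is asserted, not constructed, and the missing piece is exactly what the paper's main algebraic input supplies.

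Concretely, the paper does not work with an abstractly-characterized socle submodule; it uses Goto's explicit module
\[
\Sigma(\Theta;M)=\Theta M+\sum_{i=1}^d\big((\theta_1,\dots,\hat\theta_i,\dots,\theta_d)M:_M\theta_i\big)\subseteq M,
\]
whose image in $M/\Theta M$ is your $\Sigma_\Gamma$. Two facts about it (Theorem 2.3, a graded version of Goto's result, proved in the Appendix) carry the whole argument: $\Sigma(\Theta;M)/\Theta M\cong\bigoplus_{C\subsetneq[d]}H^{|C|}_\mideal(M)(-\ddd_C)$, and there is an \emph{injection} $M/\Sigma(\Theta;M)\hookrightarrow H^d_\mideal(M)(-\ddd_{[d]})$. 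The proof of the duality then has two genuinely different halves: a dimension count showing $\dim_\field R/\Sigma(\Theta;R)=\dim_\field\omega_R/\Sigma(\Theta;\omega_R)$ (using the structure theorem, the Buchsbaum multiplicity formula, and $H^i_\mideal(\omega_R)\cong H^{d-i+1}_\mideal(R)^\vee$); and the construction of a surjection $R/\Sigma(\Theta;R)(+\ddd)\twoheadrightarrow(\omega_R/\Sigma(\Theta;\omega_R))^\vee$, obtained by dualizing the injection above and observing that $\Sigma(\Theta;R)$ annihilates the target --- which uses the colon-ideal definition, since $\Sigma(\Theta;R)\cdot\omega_R\subseteq\Sigma(\Theta;\omega_R)$. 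Your sketch --- the lower local cohomology modules exchange under $j\leftrightarrow d+1-j$, so the submodules ``correspond under duality'' --- only yields equality of graded dimensions; it produces no map, and matching Hilbert functions is far from an isomorphism. The ``intrinsicness'' worry you raise is answered precisely by the colon-ideal presentation, which is what makes $\Sigma(\Theta;-)$ behave well under the module structure and lets the surjection descend.

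Two smaller notes: the paper's Theorem 1.4 is proved for \emph{all} Buchsbaum graded algebras of depth $\geq 2$, so Lefschetz duality is not needed (the paper observes in Remark 3.11 that it does give a shortcut to Lemma 2.2(ii) in the manifold case). And your fallback ``computational route'' is essentially the proof in [MN] that the paper cites for Theorem 1.1; it gives the numerics but, as you say, not the explanation.
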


The $h''$-numbers are certain modifications of $h$-numbers (see Section 3 for their definition). Similarly to the $h$-numbers, the $h''$-numbers are usually arranged in a vector, called the $h''$-vector. For homology manifolds, this vector appears to be a ``correct" analog of the $h$-vector. Indeed, many properties of $h$-vectors of homology balls and spheres are now known to hold for the $h''$-vectors of homology manifolds (with and without boundary), see recent survey articles \cite{KN,Sw}. 
In light of Gr\"abe's result from \cite{Gr84} and Theorem \ref{MuraiNovik},
it is natural to ask if Theorem \ref{MuraiNovik}
can be explained by Matlis duality.
The first goal of this paper is to provide such an explanation.

To this end, the key object is the submodule  $\Sigma(\Theta;M)$
defined by Goto \cite{Go}. Several definitions are in order.
Let $S=\field[x_1,\dots,x_n]$ be a graded polynomial ring over a field $\field$ with $\deg x_i=1$ for $i=1,2,\dots,n$.
Let $M$ be a finitely generated graded $S$-module of Krull dimension $d$ and
let $\Theta=\theta_1,\dots,\theta_d$ be a homogeneous system of parameters for $M$.
The module $\Sigma(\Theta;M)$ is defined as follows:
$$\Sigma(\Theta;M)=\Theta M + \left(\sum_{i=1}^d (\theta_1,\dots,\hat \theta_i,\dots,\theta_d) M :_M \theta_i\right) \subseteq M.$$
This module was introduced by Goto in \cite{Go} and has been used in the study of Buchsbaum local rings. Note that if $M$ is a Cohen--Macaulay module, then $\Sigma(\Theta;M)=\Theta M$.
We first show that this submodule is closely related to the $h''$-vectors.
Specifically, we establish the following explicit algebraic interpretation of $h''$-numbers.

\begin{theorem}
\label{3.3}
Let $(\Delta,\Gamma)$ be a Buchsbaum relative simplicial complex of dimension $d-1$ and let $\Theta$ be a linear system of parameters for $\field[\Delta,\Gamma]$. Then
$$\dim_\field \big( \field[\Delta,\Gamma]/ \Sigma \big(\Theta;\field[\Delta,\Gamma])\big)_j =h_j''(\Delta,\Gamma) \quad \mbox{ for all $j=0,1,\dots,d$}.$$
\end{theorem}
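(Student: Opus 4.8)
The plan is to compute the Hilbert function of $\field[\Delta,\Gamma]/\Sigma(\Theta;\field[\Delta,\Gamma])$ by passing to the Artinian reduction $N:=\field[\Delta,\Gamma]/\Theta\field[\Delta,\Gamma]$ and subtracting the contribution of its submodule $\Sigma(\Theta;\field[\Delta,\Gamma])/\Theta\field[\Delta,\Gamma]$. Write $M:=\field[\Delta,\Gamma]$. Since $(\Delta,\Gamma)$ is Buchsbaum, Schenzel's formula for the Hilbert function of an Artinian reduction of a Buchsbaum module gives $\dim_\field N_j=h_j'(\Delta,\Gamma)$, and the definition of the $h''$-numbers amounts to $h_j''(\Delta,\Gamma)=h_j'(\Delta,\Gamma)-\binom{d}{j}\dim_\field\widetilde H_{j-1}(\Delta,\Gamma;\field)$ for $1\le j\le d-1$, with $h_j''(\Delta,\Gamma)=h_j'(\Delta,\Gamma)$ for $j\in\{0,d\}$. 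Also, being Buchsbaum, $M$ has $H^i_\mathfrak{m}(M)$ concentrated in degree $0$ for $i<d$, with $\dim_\field[H^i_\mathfrak{m}(M)]_0=\dim_\field\widetilde H_{i-1}(\Delta,\Gamma;\field)$ by the relative form of Hochster's formula (Gr\"abe \cite{Gr84}), and $H^0_\mathfrak{m}(M)=0$ because $\Delta\setminus\Gamma$ contains a vertex. Hence it suffices to prove the following purely algebraic statement: if $M$ is a Buchsbaum $S$-module of dimension $d$ with $H^i_\mathfrak{m}(M)$ concentrated in degree $0$ for $i<d$ and $H^0_\mathfrak{m}(M)=0$, and $\Theta$ is any linear system of parameters for $M$, then
\[
\dim_\field\big(\Sigma(\Theta;M)/\Theta M\big)_j=\binom{d}{j}\dim_\field[H^j_\mathfrak{m}(M)]_0\quad(1\le j\le d-1),
\]
and $\big(\Sigma(\Theta;M)/\Theta M\big)_j=0$ for $j=0$ and for $j\ge d$.

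To analyze $\Sigma(\Theta;M)/\Theta M$ I would work one colon module at a time. Fix $i$ and put $M^{(i)}:=M/(\theta_1,\dots,\hat\theta_i,\dots,\theta_d)M$, which is again Buchsbaum (a quotient of a Buchsbaum module by part of a system of parameters is Buchsbaum, by St\"uckrad--Vogel) and has Krull dimension $1$. Since $\mathfrak{m}$ annihilates $H^0_\mathfrak{m}(M^{(i)})$ and $\theta_i$ is a parameter on the one-dimensional module $M^{(i)}$, one has $(0:_{M^{(i)}}\theta_i)=H^0_\mathfrak{m}(M^{(i)})$; pulling this back along $M\twoheadrightarrow M^{(i)}$ identifies $\big((\theta_1,\dots,\hat\theta_i,\dots,\theta_d)M:_M\theta_i\big)$ modulo $(\theta_1,\dots,\hat\theta_i,\dots,\theta_d)M$ with $H^0_\mathfrak{m}(M^{(i)})$. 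Moreover $\theta_iM^{(i)}\cong\big(M^{(i)}/H^0_\mathfrak{m}(M^{(i)})\big)(-1)$ has no $\mathfrak{m}$-torsion, so the finite-length module $(0:_{M^{(i)}}\theta_i)\cap\theta_iM^{(i)}$ vanishes, and therefore the image $C_i$ of this colon module in $N=M^{(i)}/\theta_iM^{(i)}$ is isomorphic to $H^0_\mathfrak{m}(M^{(i)})$ as an $S$-module; in particular $\mathfrak{m}C_i=0$. Thus $\Sigma(\Theta;M)/\Theta M=\sum_{i=1}^d C_i$, and this sum lies inside $\mathrm{soc}(N)$. Next, by killing $\theta_1,\theta_2,\dots$ one at a time and splitting the resulting long exact sequences in local cohomology using the Buchsbaum vanishing (every element of $\mathfrak{m}$ acts as $0$ on $H^p_\mathfrak{m}$ in cohomological degree below the Krull dimension), one computes $\dim_\field[H^0_\mathfrak{m}(M^{(i)})]_j=\binom{d-1}{j}\dim_\field[H^j_\mathfrak{m}(M)]_0$ for all $j$. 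In particular $(C_i)_j=0$ for $j=0$ and for $j\ge d$, which settles those cases.

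The remaining, and hardest, step is to evaluate $\dim_\field\big(\sum_{i=1}^d C_i\big)_j$ for $1\le j\le d-1$. The summands overlap heavily, since $\sum_{i=1}^d\dim_\field(C_i)_j=d\binom{d-1}{j}\dim_\field[H^j_\mathfrak{m}(M)]_0=(d-j)\binom{d}{j}\dim_\field[H^j_\mathfrak{m}(M)]_0$, so one must show that the sum collapses to exactly $\binom{d}{j}\dim_\field[H^j_\mathfrak{m}(M)]_0$. I expect the intersection pattern to realize the combinatorial design in which the degree-$j$ component of $\sum_i C_i$ has a basis indexed by the $j$-element subsets of $\{1,\dots,d\}$ (tensored with $[H^j_\mathfrak{m}(M)]_0$), with $C_i$ spanned by those subsets that omit $i$: for $j\le d-1$ every such subset omits some index, so $\sum_i C_i$ fills the entire $\binom{d}{j}$-dimensional ambient space, whereas for $j=d$ no summand contributes. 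Making this precise is most naturally carried out through the Koszul complex $K_\bullet(\Theta;M)$, because the kernel of the surjection $\bigoplus_{i=1}^d H^0_\mathfrak{m}(M^{(i)})\twoheadrightarrow\Sigma(\Theta;M)/\Theta M$ is governed by the Koszul $1$-cycles of $\Theta$ on $M$, whose structure for a Buchsbaum module is again expressed through the $H^p_\mathfrak{m}(M)$ with binomial multiplicities; alternatively, this graded description of $\Sigma(\Theta;M)/\Theta M$ for Buchsbaum modules is essentially contained in Goto's analysis of the submodule $\Sigma$ in \cite{Go}, which one may invoke after checking the hypotheses above. I regard this overlap bookkeeping --- equivalently, the identification of $\Sigma(\Theta;M)/\Theta M$ with precisely the part of $\mathrm{soc}(N)$ accounted for by the modules $H^p_\mathfrak{m}(M)$ with $p<d$, carrying the correct multiplicities $\binom{d}{j}$ --- as the main obstacle of the proof.
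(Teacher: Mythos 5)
Your reduction is exactly the route the paper takes: combine Schenzel's formula $\dim_\field(M/\Theta M)_j = h_j'(\Delta,\Gamma)$ with the definition of the $h''$-numbers, so that everything hinges on the single identity $\dim_\field\big(\Sigma(\Theta;M)/\Theta M\big)_j = \binom{d}{j}\,\tilde\beta_{j-1}(\Delta,\Gamma)$ for $0\le j\le d-1$ (and $=0$ for $j\ge d$). Your analysis of each colon module is also on the right track: identifying $\big((\theta_1,\dots,\hat\theta_i,\dots,\theta_d)M:_M\theta_i\big)$ modulo $(\theta_1,\dots,\hat\theta_i,\dots,\theta_d)M$ with $H^0_\mideal(M^{(i)})$, and the count $\dim_\field\big[H^0_\mideal(M^{(i)})\big]_j = \binom{d-1}{j}\dim_\field\big[H^j_\mideal(M)\big]_0$, corresponds to Lemma \ref{A.1} in the paper's appendix.

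There is, however, a genuine gap at exactly the step you flag as ``the main obstacle.'' You correctly observe that the sum $\sum_i C_i$ overcounts, $\sum_i \dim_\field(C_i)_j = (d-j)\binom{d}{j}\dim_\field[H^j_\mideal(M)]_0$, and you describe the intended combinatorial structure (a basis indexed by $j$-subsets of $[d]$, with $C_i$ contributing the subsets omitting $i$) --- but ``I expect the intersection pattern to realize the combinatorial design\ldots'' is a prediction, not an argument, and you defer the actual work either to an unspecified Koszul-complex computation or to a citation of Goto. The citation is not immediate: Goto's \cite[Proposition 3.6]{Go} is stated for Buchsbaum \emph{local rings}, whereas what is needed here is the corresponding statement for graded modules with degree shifts tracked carefully; the paper explicitly remarks that no such graded version exists in the literature and supplies a proof of it (Theorem \ref{2.3}, proved in the Appendix via an iterated chain of connecting homomorphisms on the modules $M\langle C\rangle$, exploiting the Buchsbaum vanishing $\mideal\cdot H^i_\mideal(M\langle C\rangle)=0$ for $i<\dim$). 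The paper's proof of Theorem \ref{3.3} then consists precisely of the reduction you wrote down followed by an application of Theorem \ref{2.3}(i). In short, you have correctly diagnosed what must be proved and reproduced the preliminary reductions, but the central isomorphism $\Sigma(\Theta;M)/\Theta M \cong \bigoplus_{C\subsetneq[d]}H^{|C|}_\mideal(M)(-\ddd_C)$ is left unproven.
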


Theorems \ref{MuraiNovik} and \ref{3.3} suggest
that when $\Delta$ is a homology manifold with boundary there might be a duality between the quotients of
$\field[\Delta]$ and $\field[\Delta,\partial \Delta]$ by $\Sigma(\Theta;\field[\Delta])$ and
$\Sigma(\Theta; \field[\Delta,\partial \Delta])$, respectively.
We prove that this is indeed the case.
In fact, we prove a more general algebraic result on canonical modules of Buchsbaum graded algebras.
Let $\mideal=(x_1,\dots,x_n)$ be the graded maximal ideal of $S$.
If $M$ is a finitely generated graded $S$-module of Krull dimension $d$,
then the \textit{canonical module} of $M$, $\omega_M$, is the module
$$\omega_M := \left( H_\mideal^d(M)\right)^\vee,$$
where $H_\mideal^i(M)$ denotes the $i$th local cohomology module of $M$.
We prove that the following isomorphism holds for {\em all} Buchsbaum graded algebras.

\begin{theorem}\label{BBMdual}
Let $R=S/I$ be a Buchsbaum graded $\field$-algebra of Krull dimension $d \geq 2$, let
$\Theta=\theta_1,\dots,\theta_d \in S$ be a homogeneous system of parameters for $R$,
and let $\ddd=\sum_{i=1}^d \deg \theta_i$.
If $\depth\ \! R \geq 2$, then 
$$ \omega_R/\Sigma(\Theta;\omega_R) 
\cong \big(R/\Sigma(\Theta;R) \big)^\vee (-\ddd).$$
\end{theorem}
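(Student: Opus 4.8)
The plan is to reduce, via Noether normalization, to a duality of finite modules over a polynomial ring, realize the asserted isomorphism as the adjoint of an explicit pairing, and verify that the pairing is perfect using the structure theory of Buchsbaum modules.

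First I would set $A:=\field[\theta_1,\dots,\theta_d]\subseteq S$. Since $\Theta$ is a homogeneous system of parameters, $A$ is a graded polynomial ring with maximal graded ideal $\mathfrak{n}=\Theta A$, $R$ is a finitely generated graded $A$-module, and graded local duality over $A$ gives $\omega_R\cong\Hom_A(R,\omega_A)=\Hom_A(R,A)(-\ddd)$ because $\omega_A=A(-\ddd)$. As $R$ is Buchsbaum it is Cohen--Macaulay on the punctured spectrum of $A$, so $\depth R\ge 2$ (together with $d\ge 2$) forces $R$ to satisfy Serre's condition $(S_2)$; hence $R$ is a reflexive $A$-module and $\omega_{\omega_R}\cong R$. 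In particular the desired isomorphism is symmetric under interchanging $R$ and $\omega_R$, and (both being graded of finite dimension in each degree) it suffices to produce a \emph{natural perfect pairing}
$$R/\Sigma(\Theta;R)\ \times\ \omega_R/\Sigma(\Theta;\omega_R)\ \longrightarrow\ (A/\Theta A)(-\ddd)=\field(-\ddd).$$

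The pairing is the reduction modulo $\Theta$ of the evaluation pairing $R\times\Hom_A(R,A)(-\ddd)\to A(-\ddd)$. The routine half is that it descends to the quotients by $\Sigma(\Theta;\cdot)$: if $x\in(\theta_1,\dots,\widehat{\theta_i},\dots,\theta_d)R:_R\theta_i$ and $\phi\in\Hom_A(R,A)$, then $\theta_i\phi(x)=\phi(\theta_i x)\in(\theta_1,\dots,\widehat{\theta_i},\dots,\theta_d)A$, and since $\Theta$ is a regular sequence on the polynomial ring $A$ this forces $\phi(x)\in(\theta_1,\dots,\widehat{\theta_i},\dots,\theta_d)A\subseteq\Theta A$; combined with $\phi(\Theta R)\subseteq\Theta A$ and the symmetric statement for the second factor (using $\omega_{\omega_R}\cong R$), this shows the pairing is well defined on the $\Sigma$-quotients. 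Taking adjoints gives canonical maps $\omega_R/\Sigma(\Theta;\omega_R)\to(R/\Sigma(\Theta;R))^\vee(-\ddd)$ and $R/\Sigma(\Theta;R)\to(\omega_R/\Sigma(\Theta;\omega_R))^\vee(-\ddd)$, each the Matlis dual of the other.

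The crux --- and the step I expect to be the main obstacle --- is nondegeneracy of the pairing: that every $x\in R$ with $\phi(x)\in\mathfrak{n}$ for all $\phi\in\Hom_A(R,A)$ already lies in $\Sigma(\Theta;R)$. Granting this, the $R\leftrightarrow\omega_R$ symmetry (noting that $\omega_R$ is again a Buchsbaum module with $\depth\omega_R\ge 2$) yields the corresponding statement for $\omega_R$, so both canonical maps above are injective; being mutually dual maps of finite-dimensional graded vector spaces, they are then isomorphisms, which is the claim. To prove this reverse inclusion I would exploit the Buchsbaum hypothesis through the fact that each $H^i_{\mathfrak{n}}(R)$ with $i<d$ has finite length and is annihilated by $\mathfrak{n}$, so that over the regular ring $A$ the modules $\Ext^\bullet_A(R,A)$ --- equivalently, the local cohomology of $R$ and of $\omega_R$ --- are computed from the self-dual Koszul complex $K_\bullet(\Theta;A)$ resolving $\field$. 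Concretely, using reflexivity one identifies $N:=\{x\in R:\phi(x)\in\mathfrak{n}\ \forall\phi\}$ with $\Hom_A(\Hom_A(R,A),\mathfrak{n})$ and studies the exact sequence obtained by applying $\Hom_A(\Hom_A(R,A),-)$ to $0\to\mathfrak{n}\to A\to\field\to 0$; the connecting map lands in $\Ext^1_A(\Hom_A(R,A),\mathfrak{n})$, which by local duality is governed by the lower local cohomology of $\omega_R$, and Buchsbaumness makes all the relevant terms finite length and $\mathfrak{n}$-torsion. The real work is to carry out this computation while tracking the internal-degree shifts, so that $N/\Theta R$ is matched exactly with $\Sigma(\Theta;R)/\Theta R$; an alternative route I would keep in reserve is to first establish a functorial short exact sequence expressing $M/\Sigma(\Theta;M)$, for a Buchsbaum $A$-module $M$, in terms of $M/\Theta M$ and the modules $H^i_{\mathfrak{n}}(M)$ with $i<d$, apply it to $R$ and to $\omega_R$, Matlis-dualize the one for $\omega_R$, and compare using the identification $H^i_{\mathfrak{n}}(\omega_R)\cong H^{d+1-i}_{\mathfrak{n}}(R)^\vee$ (up to a degree shift, for $2\le i\le d-1$) that follows from local duality and the Buchsbaum property.
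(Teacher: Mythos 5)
Your proposal correctly identifies the ambient structure (Noether normalization $A=\field[\theta_1,\dots,\theta_d]$, local duality $\omega_R\cong\Hom_A(R,A)(-\ddd)$, the evaluation pairing) and the ``routine half'' is indeed fine: the computation showing that $\Sigma(\Theta;R)$ and $\Sigma(\Theta;\omega_R)$ pair into $\mathfrak{n}=\Theta A$ is a clean use of $\Theta$ being a regular sequence on $A$. But the step you yourself flag as the crux --- nondegeneracy of the descended pairing, equivalently that $N:=\{x\in R:\phi(x)\in\mathfrak{n}\ \forall\,\phi\in\Hom_A(R,A)\}$ equals $\Sigma(\Theta;R)$ rather than merely containing it --- is where the entire difficulty of the theorem lives, and you do not prove it. The sketch you offer (push $N$ through $\Hom_A(\Hom_A(R,A),-)$ applied to $0\to\mathfrak{n}\to A\to\field\to 0$ and control the connecting map by Buchsbaumness) is a program, not an argument: it leaves open exactly the quantitative matching of $N/\Theta R$ with $\Sigma(\Theta;R)/\Theta R$, including all internal degree shifts, that must be established. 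As it stands this is a genuine gap.

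It is worth noting that your ``reserve'' idea --- a functorial description of $M/\Sigma(\Theta;M)$ via $M/\Theta M$ and the lower local cohomology of $M$ --- is essentially what the paper uses, in the precise form of Goto's structure theorem (Theorem \ref{2.3}). Part (i) of that theorem gives the explicit isomorphism $\Sigma(\Theta;M)/\Theta M\cong\bigoplus_{C\subsetneq[d]}H^{|C|}_\mideal(M)(-\ddd_C)$ for Buchsbaum $M$, which together with the Buchsbaum multiplicity formula (Lemma \ref{2.1}) and the duality $H^i_\mideal(\omega_R)\cong H^{d-i+1}_\mideal(R)^\vee$ (Lemma \ref{2.2}(ii), valid here because $\depth R\geq 2$ and $\depth\omega_R\geq 2$) immediately yields the numerical equality $\dim_\field R/\Sigma(\Theta;R)=\dim_\field\omega_R/\Sigma(\Theta;\omega_R)$. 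Part (ii) gives an injection $\omega_R/\Sigma(\Theta;\omega_R)\hookrightarrow H^d_\mideal(\omega_R)(-\ddd)$; Matlis-dualizing and using $H^d_\mideal(\omega_R)^\vee\cong R$ (Lemma \ref{2.2}(iii)) produces a surjection $R(\ddd)\to(\omega_R/\Sigma(\Theta;\omega_R))^\vee$ that factors through $R/\Sigma(\Theta;R)$ because $\Sigma(\Theta;R)\cdot\omega_R\subseteq\Sigma(\Theta;\omega_R)$, and the dimension count then forces this surjection to be an isomorphism. This ``one map plus a dimension count'' route sidesteps the nondegeneracy question entirely; to make your pairing argument rigorous you would in effect have to reprove Theorem \ref{2.3}, so you should either cite (or prove) Goto's result or switch to this strategy.
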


As we mentioned above,
if $\Delta$ is a connected orientable homology manifold, then (by Gr\"abe's result) the module $\field[\Delta,\partial \Delta]$ is the canonical module of $\field [\Delta]$; furthermore it is not hard to see that  $\field[\Delta]$ satisfies the assumptions of Theorem \ref{BBMdual}. (Indeed, the connectivity of $\Delta$ implies that $\depth\ \! \field[\Delta]\geq 2$, and the fact that $\field[\Delta]$ is Buchsbaum follows from Schenzel's theorem --- see Theorem \ref{3.1} below.) 
Hence we obtain the following corollary
that generalizes \eqref{ball}.

\begin{corollary}
\label{MFDdual}
Let $\Delta$ be a connected orientable $\field$-homology $(d-1)$-manifold with non-empty boundary $\partial \Delta$ and let $\Theta$ be a linear system of parameters for $\field[\Delta]$.
Then
$$ \field[\Delta,\partial \Delta]/\Sigma(\Theta; \field[\Delta,\partial \Delta] ) \cong  \big( \field [\Delta] / \Sigma(\Theta;\field[\Delta]) \big) ^\vee(-d).$$
\end{corollary}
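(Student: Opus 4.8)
The plan is to obtain Corollary~\ref{MFDdual} as nothing more than the specialization of Theorem~\ref{BBMdual} to the graded $\field$-algebra $R=\field[\Delta]$ (which is of the form $S/I_\Delta$). Thus the whole argument reduces to three routine tasks: verifying that $\field[\Delta]$ satisfies the hypotheses of Theorem~\ref{BBMdual}, identifying its canonical module, and tracking the degree shift.

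First I would dispose of the numerics and the two ring-theoretic hypotheses. Since $\Delta$ is a pure $(d-1)$-dimensional complex, $\field[\Delta]$ is equidimensional of Krull dimension $d$; moreover a connected homology $(d-1)$-manifold with non-empty boundary forces $d\ge 2$, which is what Theorem~\ref{BBMdual} needs. A linear system of parameters $\Theta=\theta_1,\dots,\theta_d$ is in particular a homogeneous system of parameters, and $\ddd=\sum_{i=1}^d\deg\theta_i=d$. That $\field[\Delta]$ is Buchsbaum follows from Schenzel's theorem (Theorem~\ref{3.1} below), since $\Delta$ is a homology manifold. Finally, $\depth\field[\Delta]\ge 2$: by the Reisner/Hochster-type cohomological criterion this reduces to $\widetilde H_0(\Delta;\field)=0$ together with $\widetilde H_{-1}(\lk_\Delta\{v\};\field)=0$ for every vertex $v$, i.e.\ to $\Delta$ being connected with no isolated vertex, which is immediate because $\Delta$ is connected of positive dimension.

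The one genuinely structural input is the identification of the canonical module, and here I invoke Gr\"abe's theorem~\cite{Gr84}: for a connected orientable homology manifold with non-empty boundary one has $\omega_{\field[\Delta]}\cong\field[\Delta,\partial\Delta]$. This is exactly the step that uses connectivity, orientability, and $\partial\Delta\neq\emptyset$. Substituting $R=\field[\Delta]$, $\omega_R=\field[\Delta,\partial\Delta]$, and $\ddd=d$ into the conclusion of Theorem~\ref{BBMdual} then yields
$$\field[\Delta,\partial\Delta]/\Sigma(\Theta;\field[\Delta,\partial\Delta])\cong\big(\field[\Delta]/\Sigma(\Theta;\field[\Delta])\big)^\vee(-d),$$
which is precisely the assertion.

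Once Theorem~\ref{BBMdual} is available the argument is essentially bookkeeping, the only non-formal ingredients being the cited results of Schenzel and Gr\"abe. I expect the mildest obstacle to be purely conventional: checking that the grading normalizations for the relative Stanley--Reisner module $\field[\Delta,\partial\Delta]$, for the canonical module $\omega_{\field[\Delta]}$, and for the Matlis dual are all consistent, so that the shift comes out to be exactly $(-d)$; and making sure the orientability hypothesis is inserted precisely at the point where Gr\"abe's identification of $\omega_{\field[\Delta]}$ requires it.
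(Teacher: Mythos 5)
Your proposal is correct and follows essentially the same route as the paper: invoke Gr\"abe's theorem to identify $\omega_{\field[\Delta]}\cong\field[\Delta,\partial\Delta]$, use Schenzel's criterion for the Buchsbaum property and connectivity for $\depth\geq 2$, and then specialize Theorem~\ref{BBMdual} with $\ddd=d$. The only differences are presentational — you spell out the depth computation and the $d\geq 2$ check a bit more explicitly than the paper does.
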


We also consider combinatorial and algebraic applications of Theorems \ref{3.3} and \ref{BBMdual}. Specifically, we prove the monotonicity of $h''$-vectors for pairs of Buchsbaum simplicial complexes, establish the unimodality of $h''$-vectors of barycentric subdivisions of Buchsbaum polyhedral complexes, provide a combinatorial formula for the $a$-invariant of Buchsbaum Stanley--Reisner rings, and extend the result of Swartz \cite[Theorem 3.1]{Sw} as well as the result of B\"ohm and Papadakis \cite[Corollary 4.5]{BP} related to the sphere $g$-conjecture to the generality of the manifold $g$-conjecture. More precisely, Swartz's result asserts that most of bistellar flips when applied to a homology sphere preserve the weak Lefschetz property while B\"ohm--Papadakis' result asserts that stellar subdivisions at large-dimensional faces of homology spheres preserve the weak Lefschetz property; we extend both of these results to bistellar flips and stellar subdivisions performed on connected orientable homology manifolds.

The structure of the paper is as follows. In Section 2 we prove Theorem \ref{BBMdual} (although we defer part of a proof to the Appendix). In Section 3, we study Stanley--Reisner rings and modules of Buchsbaum simplicial
complexes. There, after reviewing basics of simplicial complexes and Stanley--Reisner
rings and modules, we verify Theorem \ref{3.3} and derive several combinatorial consequences.
Section 4 is devoted to applications of our results to the manifold $g$-conjecture. Finally, in the Appendix, we prove a graded version of Goto's result \cite[Proposition 3.6]{Go} --- a result on which our proof of Theorem \ref{BBMdual} is based.

\section{Duality in Buchsbaum rings}

In this section, we prove Theorem \ref{BBMdual}.
We start by recalling some definitions and results pertaining to Buchsbaum rings and modules.

Let $S=\field[x_1,\dots,x_n]$ be a graded polynomial ring  with $\deg x_i=1$ for $i=1,2,\dots,n$ and let $\mideal=(x_1,\dots,x_n)$ be the graded maximal ideal of $S$.
Given a graded $S$-module $N$, we denote by $N(a)$ the module $N$ with grading shifted by $a \in \mathbb Z$, that is, $N(a)_j=N_{a+j}$.
If $M$ is a finitely generated graded $S$-module of Krull dimension $d$, then
a \textit{homogeneous system of parameters} (or h.s.o.p.) for $M$
is a sequence $\Theta=\theta_1,\dots,\theta_d \in \mideal$ of homogeneous elements such that $\dim_\field M/\Theta M < \infty$.
A sequence $\theta_1,\dots,\theta_r \in \mideal$ of homogeneous elements is said to be a \textit{weak $M$-sequence} if
$$(\theta_1,\dots,\theta_{i-1})M:_M \theta_i = (\theta_1,\dots,\theta_{i-1})M:_M\mideal$$
for all $i=1,2,\dots,r$.
We say that $M$ is \textit{Buchsbaum} if every h.s.o.p.~for $M$ is a weak $M$-sequence.

Let $M$ be a finitely generated graded $S$-module and let $\Theta$ be its h.s.o.p.
The function $P_{\Theta,M}:\ZZ \to \ZZ$ defined by
$$P_{\Theta,M}(n):= \dim_\field \big(M/ (\Theta)^{n+1} M \big)$$
is called the \textit{Hilbert--Samuel function of $M$ w.r.t.\ the ideal $(\Theta)$}.
It is known that there is a polynomial in $n$ of degree $d$, denoted by $p_{\Theta,M}(n)$, such that $P_{\Theta,M}(n)=p_{\Theta,M}(n)$ for $n \gg 0$ (see \cite[Proposition 4.6.2]{BH}).
The leading coefficient of the polynomial $p_{\Theta,M}(n)$ multiplied by $d!$ is called the \textit{multiplicity of $M$ w.r.t.\ the ideal $(\Theta)$}, and is denoted by $e_\Theta(M)$. We will make use of the following known characterization of Buchsbaum property,
see \cite[Theorem I.1.12 and Proposition I.2.6]{SV}.

\begin{lemma}
\label{2.1}
A finitely generated graded $S$-module $M$ of Krull dimension $d$ is Buchsbaum if and only if, for every h.s.o.p.\ $\Theta$ of $M$, 
$$\dim_\field \big(M/\Theta M\big)-e_\Theta(M)=\sum_{i=0}^{d-1} { d-1\choose i} \dim_\field H_\mideal^i(M).$$
\end{lemma}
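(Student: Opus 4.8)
The plan is to turn both sides of the claimed equality into Koszul data and then induct on $d$. Let $K_\bullet=K_\bullet(\Theta;M)$ be the Koszul complex of $\Theta$ on $M$. Since $H_0(K_\bullet)=M/\Theta M$ has finite $\field$-dimension and $\Theta$ is an h.s.o.p., all homology modules $H_i(K_\bullet)$ have finite length, and Serre's theorem (see, e.g., \cite[Section~4.7]{BH}) identifies the Koszul Euler characteristic with the multiplicity: $\sum_{i\ge 0}(-1)^i\dim_\field H_i(K_\bullet)=e_\Theta(M)$. Hence
\[
\dim_\field(M/\Theta M)-e_\Theta(M)=\sum_{i=1}^{d}(-1)^{i-1}\dim_\field H_i(\Theta;M),
\]
so the lemma becomes the assertion that this alternating sum equals $I(M):=\sum_{i=0}^{d-1}\binom{d-1}{i}\dim_\field H_\mideal^i(M)$ for every h.s.o.p.\ $\Theta$ exactly when $M$ is Buchsbaum.

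For the ``only if'' direction, assume $M$ is Buchsbaum. First I would record the standard consequences of the weak-sequence condition, each obtained by descending induction along the Koszul complex: (a) $\mideal H_\mideal^i(M)=0$ for $0\le i\le d-1$; (b) for the first member $\theta_1$ of any h.s.o.p.\ the colon relation with empty prefix gives $0:_M\theta_1=0:_M\mideal=H_\mideal^0(M)$, a finite-length module killed by $\mideal$; and (c) $\mideal H_i(\Theta;M)=0$ for $i\ge 1$, so that all modules below are $\field$-vector spaces. The binomial coefficients come from one clean observation: every parameter annihilates $N:=H_\mideal^0(M)$, so the Koszul complex $K_\bullet(\Theta';N)$, with $\Theta'=\theta_2,\dots,\theta_d$, has zero differential, whence $\dim_\field H_i(\Theta';N)=\binom{d-1}{i}\dim_\field H_\mideal^0(M)$. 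Now induct on $d$; the case $d=1$ is immediate from (a) and (b). In the inductive step one combines the long exact Koszul sequence expressing $H_i(\Theta;M)$ through the kernel and cokernel of $\theta_1$ acting on $H_i(\Theta';M)$, the long exact Koszul sequence of $0\to N\to M\to M/\theta_1 M\to 0$, the local-cohomology exact sequences coming from $0\to N\to M\xrightarrow{\theta_1}M\to M/\theta_1M\to 0$ (where (a) kills the relevant connecting maps, so the low-degree $H_\mideal^j(M/\theta_1M)$ are computed from the $H_\mideal^j(M)$), and the inductive hypothesis; the bookkeeping then collapses via the Pascal recursion $\binom{d-1}{i}=\binom{d-2}{i-1}+\binom{d-2}{i}$, the two terms accounting respectively for the $N$-contribution and the $M/\theta_1M$-contribution.

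For the ``if'' direction I would invoke the companion \emph{Buchsbaum inequality}: for every h.s.o.p.\ $\Theta$ of any finitely generated graded $S$-module of Krull dimension $d$ one has $\dim_\field(M/\Theta M)-e_\Theta(M)\le I(M)$, proved by the same inductive reduction, \emph{with} the refinement that equality holds precisely when the colon relations $(\theta_1,\dots,\theta_{i-1})M:_M\theta_i=(\theta_1,\dots,\theta_{i-1})M:_M\mideal$ hold for all $i$ and the analogous equality persists after passing to $M/\theta_1M$. Demanding equality for every h.s.o.p.\ therefore forces every h.s.o.p.\ to be a weak $M$-sequence, i.e.\ that $M$ is Buchsbaum.

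The main obstacle is the inductive step above, and its delicate feature is that for a general parameter $\theta$ the module $M/\theta M$ need not itself be Buchsbaum, so the induction cannot be run on the naive statement ``$M/\theta M$ is Buchsbaum''. The remedy, as in \cite{SV}, is to work with a \emph{standard} (equivalently, sufficiently generic) system of parameters, to keep careful track of the additional local-cohomology terms acquired by $M/\theta_1 M$, and to check that they collect exactly into the coefficients $\binom{d-1}{i}$; on the converse side one must likewise verify that the equality case of the Buchsbaum inequality is precisely the weak-sequence condition. That $\field$ is infinite is used to take $\theta_1$ generic among linear forms, the statement for an arbitrary h.s.o.p.\ being recovered afterwards from the fact that, for a Buchsbaum module, the quantity $\dim_\field(M/\Theta M)-e_\Theta(M)$ is independent of $\Theta$.
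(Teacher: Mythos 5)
The paper does not prove Lemma~\ref{2.1}; it quotes it from the St\"uckrad--Vogel book \cite[Theorem~I.1.12 and Proposition~I.2.6]{SV}. Your Koszul-homology plan --- use Serre's Euler-characteristic identity to turn the defect $\dim_\field(M/\Theta M)-e_\Theta(M)$ into the alternating sum $\sum_{i\ge 1}(-1)^{i-1}\dim_\field H_i(\Theta;M)$, then match this to $I(M)=\sum_{i<d}\binom{d-1}{i}\dim_\field H_\mideal^i(M)$ by induction on $d$, exploiting that $\mideal$ kills $H_\mideal^0(M)$ and the higher Koszul homologies of a Buchsbaum module --- is essentially the standard argument one finds in \cite{SV}, so the route is the right one.

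Two details are off, though, and both appear in your closing discussion. First, the ``delicate feature'' you flag as the main obstacle is not actually an obstacle in the direction where you raise it: if $M$ is Buchsbaum and $\theta$ is \emph{any} parameter, then $M/\theta M$ \emph{is} Buchsbaum. Indeed, for any h.s.o.p.\ $\eta_2,\dots,\eta_d$ of $M/\theta M$ the sequence $\theta,\eta_2,\dots,\eta_d$ is an h.s.o.p.\ of $M$, hence a weak $M$-sequence; reducing the colon conditions at stages $2,\dots,d$ modulo $\theta M$ gives precisely the weak-sequence conditions for $\eta_2,\dots,\eta_d$ on $M/\theta M$. So in the ``only if'' direction the naive induction does go through without any genericity, and no ``standard'' system of parameters is needed there. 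The genericity/standard-parameter device is genuinely needed only in the ``if'' direction, where $M$ is a priori merely generalized Cohen--Macaulay and quotients by arbitrary parameters do not behave well; you should separate the two directions on this point. Second, the last sentence is circular: you cannot ``recover'' the statement for an arbitrary h.s.o.p.\ by appealing to the $\Theta$-independence of the defect for Buchsbaum modules, since that independence (the right-hand side $I(M)$ does not involve $\Theta$) is exactly what Lemma~\ref{2.1} asserts. That reduction is also unnecessary --- an h.s.o.p.\ need not consist of linear forms, so restricting $\theta_1$ to be a generic linear form would leave a real gap --- and it becomes moot once the first point is corrected, since the induction then works directly for an arbitrary homogeneous system of parameters.
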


We also recall some known results on canonical modules of Buchsbaum modules.
For a finitely generated graded $S$-module $M$,
the \textit{depth} of $M$ is defined by $\depth(M):= \min\{i:H_\mideal^i(M)\ne 0\}$.

\begin{lemma}
\label{2.2}
Let $M$ be a finitely generated graded $S$-module of Krull dimension $d$. If $M$ is Buchsbaum, then the following properties hold:
\begin{itemize}
\item[(i)] $\omega_M$ is Buchsbaum.
\item[(ii)] $H_\mideal^i(\omega_M) \cong (H_\mideal^{d-i+1}(M))^\vee$ (as graded modules)
for all $i=2,3,\dots,d-1$. 
\item[(iii)]
If $\depth(M) \geq 2$, then $(H_\mideal^d(\omega_M))^\vee \cong M$ (as graded modules).
\item[(iv)] If $\Theta$ is an h.s.o.p.\ for $M$, then $\Theta$ is also an h.s.o.p.\ for $\omega_M$
and $e_\Theta(\omega_M)=e_\Theta(M)$.
\end{itemize}
\end{lemma}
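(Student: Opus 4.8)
The tool I would build everything on is graded local duality over $S$: for every finitely generated graded $S$-module $N$ one has graded isomorphisms $\mathrm{Ext}^{n-i}_S(N,S(-n))\cong\big(H_\mideal^i(N)\big)^\vee$, equivalently $\mathbf{R}\mathrm{Hom}_S(N,\omega_S^\bullet)$ has cohomology $\big(H_\mideal^i(N)\big)^\vee$ in cohomological degree $-i$, where $\omega_S^\bullet:=S(-n)[n]$ is the normalized dualizing complex; biduality $\mathbf{R}\mathrm{Hom}_S(\mathbf{R}\mathrm{Hom}_S(N,\omega_S^\bullet),\omega_S^\bullet)\cong N$ holds since $S$ is Gorenstein. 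I would also use the standard fact (see \cite{SV}) that a Buchsbaum module $M$ of dimension $d$ satisfies $\mideal\cdot H_\mideal^i(M)=0$ for all $i<d$; in particular each $H_\mideal^i(M)$ with $i<d$ is a finite-dimensional $\field$-vector space. All isomorphisms below are graded, and I may assume $d\geq2$ (the statements are otherwise vacuous).

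I would clear (iv) first. That $\dim\omega_M=d$ is standard, since $H_\mideal^d(M)$ has $d$-dimensional support; and because $\mathrm{Supp}(\omega_M)\subseteq\mathrm{Supp}(M)$, any h.s.o.p.\ $\Theta$ of $M$ forces $\dim\big(\omega_M/\Theta\omega_M\big)<\infty$, i.e.\ $\Theta$ is an h.s.o.p.\ for $\omega_M$. For the equality $e_\Theta(\omega_M)=e_\Theta(M)$ I would invoke the associativity formula $e_\Theta(N)=\sum_{\mathfrak p}\mathrm{length}_{S_\mathfrak p}(N_\mathfrak p)\,e_\Theta(S/\mathfrak p)$, the sum over primes $\mathfrak p$ with $\dim S/\mathfrak p=d$, together with the fact that at each such $\mathfrak p$ the module $(\omega_M)_\mathfrak p$ is the Matlis dual over $S_\mathfrak p$ of the finite-length module $M_\mathfrak p$, hence has the same length.

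For (ii) and (iii) I would argue in the derived category. Write $D(M):=\mathbf{R}\mathrm{Hom}_S(M,\omega_S^\bullet)$; its cohomology is $\big(H_\mideal^i(M)\big)^\vee$ in degree $-i$ for $\depth M\leq i\leq d$, the bottom one (degree $-d$) being $\omega_M$ and all the rest finite-dimensional $\field$-vector spaces. Apply $\mathbf{R}\mathrm{Hom}_S(-,\omega_S^\bullet)$ to the truncation triangle $\omega_M[d]\to D(M)\to C^\bullet\to$ with $C^\bullet:=\tau_{\geq -d+1}D(M)$; using biduality $\mathbf{R}\mathrm{Hom}_S(D(M),\omega_S^\bullet)\cong M$ this yields a triangle
\[
\mathbf{R}\mathrm{Hom}_S(C^\bullet,\omega_S^\bullet)\ \longrightarrow\ M\ \longrightarrow\ \mathbf{R}\mathrm{Hom}_S(\omega_M,\omega_S^\bullet)[-d]\ \longrightarrow .
\]
Since every cohomology module of $C^\bullet$ is killed by $\mideal$, the hyper-$\mathrm{Ext}$ spectral sequence for $\mathbf{R}\mathrm{Hom}_S(C^\bullet,\omega_S^\bullet)$ degenerates onto a single column ($\mathrm{Ext}^p_S(\text{finite-length},S(-n))=0$ for $p\neq n$, and $\mathrm{Ext}^n_S(V,S(-n))\cong V^\vee$), showing that $\mathbf{R}\mathrm{Hom}_S(C^\bullet,\omega_S^\bullet)$ has cohomology $H_\mideal^m(M)$ in degree $m$ for $\depth M\leq m\leq d-1$. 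Reading the long exact cohomology sequence of the displayed triangle, and using that $\depth\omega_M\geq2$ (so $H_\mideal^0(\omega_M)=H_\mideal^1(\omega_M)=0$) together with the hypothesis $\depth M\geq2$ in (iii): the degree-$0$ piece gives $M\cong\big(H_\mideal^d(\omega_M)\big)^\vee$, which is (iii); the positive-degree pieces give $\big(H_\mideal^i(\omega_M)\big)^\vee\cong H_\mideal^{d-i+1}(M)$ for $2\leq i\leq d-1$, and dualizing (finite-dimensional graded modules being reflexive) turns this into (ii).

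The remaining part (i), that $\omega_M$ is Buchsbaum, is the genuinely delicate one. From (ii) one gets for free that $\mideal\cdot H_\mideal^i(\omega_M)=0$ for all $i<d$ (Matlis duality preserves annihilation by $\mideal$), i.e.\ $\omega_M$ is \emph{quasi}-Buchsbaum; but this is in general strictly weaker than Buchsbaum, so it does not conclude the argument. To upgrade it I would invoke Schenzel's refined criterion: $M$ is Buchsbaum if and only if the truncated dualizing complex $\tau_{\geq -d+1}D(M)$ is formal, i.e.\ isomorphic in the derived category to the direct sum of its cohomology modules placed in the appropriate degrees. Granting this, the Buchsbaum hypothesis makes $C^\bullet$ formal; carrying formality through the functor $\mathbf{R}\mathrm{Hom}_S(-,\omega_S^\bullet)$ and the truncation triangle above should then force $\tau_{\geq -d+1}\mathbf{R}\mathrm{Hom}_S(\omega_M,\omega_S^\bullet)$ to be formal, whence $\omega_M$ is Buchsbaum. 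The bookkeeping with the truncations, the grading/cohomological shifts, and the compatibility of formality with duality is where I expect the real work to lie; since Lemma~2.2 is only being recorded here as known input, one may equally well cite (i)--(iv) directly from Schenzel's monograph or from \cite{SV}, where these facts are established for local — hence also for graded — Buchsbaum modules.
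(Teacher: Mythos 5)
The paper does not actually prove Lemma~2.2 --- it records it as known and cites (i) to \cite[Theorem II.4.9]{SV}, (ii)--(iii) to \cite[Korollar 3.13]{Sc82} or \cite[(1.16)]{AG}, and (iv) to \cite[Lemma 2.2]{Su}. You are therefore reconstructing proofs rather than matching one.

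Your treatment of (ii), (iii), (iv) is essentially correct, and it is in fact the natural route (indeed, close to how these statements are obtained in Schenzel's and Aoyama's work). One small slip in the preamble to (iv): the phrase ``$H_\mideal^d(M)$ has $d$-dimensional support'' is not literally true --- $H_\mideal^d(M)$ is $\mideal$-torsion, hence supported at the closed point. What you mean, and what you need, is that the set of \emph{attached} primes of $H_\mideal^d(M)$ consists of the associated primes of $M$ of dimension $d$, and these become the associated primes of $\omega_M$ under Matlis duality (equivalently, $\omega_M \cong \mathrm{Ext}_S^{n-d}(M,S(-n))$ has support equal to the union of the $d$-dimensional components of $\mathrm{Supp}\,M$). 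With that correction, the containment $\mathrm{Supp}(\omega_M)\subseteq\mathrm{Supp}(M)$, the equality $\dim\omega_M=d$, and the localization/associativity argument for $e_\Theta(\omega_M)=e_\Theta(M)$ all go through. The derived-category argument for (ii) and (iii) is clean: the degeneration of the spectral sequence for $\mathbf{R}\mathrm{Hom}_S(C^\bullet,\omega_S^\bullet)$ uses precisely that each $H^{-q}(C^\bullet)$ is a finite-length module, which holds because $M$ is Buchsbaum; the long exact sequence of the triangle then yields $(H_\mideal^i(\omega_M))^\vee\cong H_\mideal^{d-i+1}(M)$ for $2\le i\le d-1$ and, when $\depth M\ge 2$, the isomorphism $M\cong(H_\mideal^d(\omega_M))^\vee$. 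Since Matlis duality is an involution on graded modules with finite-dimensional graded pieces, dualizing recovers the form stated in (ii). Note also that you do not actually need to cite $\depth\omega_M\ge2$ from \cite{Ao80}: the same long exact sequence, read at cohomological degrees $d-1$ and $d$, forces $H_\mideal^0(\omega_M)=H_\mideal^1(\omega_M)=0$.

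The genuine gap is (i), and you correctly flag it as such. Deducing from (ii) that $\mideal\cdot H_\mideal^i(\omega_M)=0$ for $i<d$ only gives quasi-Buchsbaumness, which is strictly weaker. Your appeal to Schenzel's criterion --- that $M$ is Buchsbaum iff the truncation $\tau_{>-d}D(M)$ is quasi-isomorphic to a complex of $\field$-vector spaces, equivalently is formal with $\mideal$-annihilated cohomology --- is the right idea and is indeed the mechanism behind \cite[Theorem II.4.9]{SV}. But the sentence ``carrying formality through the functor $\mathbf{R}\mathrm{Hom}_S(-,\omega_S^\bullet)$ and the truncation triangle should then force $\tau_{>-d}\mathbf{R}\mathrm{Hom}_S(\omega_M,\omega_S^\bullet)$ to be formal'' is exactly where a real argument is needed and is not supplied: the triangle $\mathbf{R}\mathrm{Hom}_S(C^\bullet,\omega_S^\bullet)\to M\to D(\omega_M)[-d]$ mixes $M$ (which is not a complex of vector spaces) into degrees $0$ and $1$ of $D(\omega_M)[-d]$, so one cannot read off formality of the relevant truncation directly; one must produce, from the splitting morphism for $D(M)$, a splitting morphism for $D(\omega_M)$, and this is the substantive content of Schenzel's proof. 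As written, your proof of (i) is a sketch, not a proof. Given that the paper itself treats (i) as a citation, ending by citing \cite{SV} or \cite{Sc82} for (i) is acceptable, but it would be clearer to state plainly that (i) is being taken from the literature rather than leaving the impression that the formality argument has been carried out.
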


\noindent See \cite[Theorem II.4.9]{SV} for (i),
\cite[Korollar 3.13]{Sc82} or \cite[(1.16)]{AG} for (ii) and (iii),
and \cite[Lemma 2.2]{Su} for (iv).

The following theorem is a graded version of Goto's
result \cite[Proposition 3.6]{Go}. The original result by Goto
is a statement about Buchsbaum {\em local rings}.
It may be possible to prove Theorem \ref{2.3} in the same way as in \cite{Go} by replacing rings with modules and by carefully keeping track of grading. 
However, since we could not find any literature allowing us to easily check this statement, we will provide its proof in the Appendix.

\begin{theorem}
\label{2.3}
Let $M$ be a finitely generated graded $S$-module of Krull dimension $d>0$. Assume further that $M$ is Buchsbaum and that
$\Theta=\theta_1,\dots,\theta_d$ is an h.s.o.p.\ for $M$ with $\deg \theta_i=\ddd_i$.
Let $\ddd_C := \sum_{i \in C} \ddd_i$ for $C \subseteq [d]=\{1,2,\dots,d\}$. Then
\begin{itemize}
\item[(i)] $\Sigma(\Theta;M)/\Theta M \cong \bigoplus_{C \subsetneq [d]} H_\mideal^{|C|} (M)(-\ddd_C)$, and
\item[(ii)] there is an injection $M/\Sigma(\Theta;M) \to H_\mideal^d(M)(-\ddd_{[d]})$.
\end{itemize}
\end{theorem}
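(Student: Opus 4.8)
\emph{Plan.} I would follow Goto's proof of \cite[Proposition 3.6]{Go}, transporting it from Buchsbaum local rings to finitely generated graded modules and keeping track of the grading shifts $\ddd_C$; the argument is an induction on $d$. The Buchsbaum hypothesis enters in three guises: every homogeneous system of parameters of $M$ --- together with every sequence obtained from it by permuting the entries or replacing entries by powers --- is a weak $M$-sequence; $\mideal H_\mideal^i(M)=0$ for all $i<d$; and, as a consequence of the first, for $i\notin C$ the colon module $(\theta_j:j\in C)M:_M\theta_i^{\,k}$ is independent of $k\ge1$ and equals $(\theta_j:j\in C)M:_M\mideal$.

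The first step is to rewrite $\Sigma(\Theta;M)$. Applying the weak-sequence identity to the ordering $\theta_1,\dots,\hat\theta_i,\dots,\theta_d,\theta_i$ of $\Theta$ gives $(\theta_1,\dots,\hat\theta_i,\dots,\theta_d)M:_M\theta_i=(\theta_1,\dots,\hat\theta_i,\dots,\theta_d)M:_M\mideal$, so
\[
\Sigma(\Theta;M)=\Theta M+\sum_{i=1}^d\big((\theta_1,\dots,\hat\theta_i,\dots,\theta_d)M:_M\mideal\big).
\]
Writing $\Theta_C=(\theta_j:j\in C)$ and $W_C=(\Theta_C M:_M\mideal)/\Theta_C M$, the colon-stability above gives $\Theta_C M:_M\mideal^{\,k}=\Theta_C M:_M\mideal$ for $|C|<d$, so $W_C=H_\mideal^0(M/\Theta_C M)$ and $\mideal W_C=0$; one also checks $W_C\cap\theta_j(M/\Theta_C M)=0$ for $j\notin C$, so that $W_C$ embeds into $W_{C\cup\{j\}}$ and the image of the $i$-th summand above in $M/\Theta M$ is isomorphic to $W_{[d]\setminus\{i\}}$. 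The base case $d=1$ is then immediate: $W_\emptyset=0:_M\theta_1=H_\mideal^0(M)$ gives (i), while the \v{C}ech complex $0\to M\to M_{\theta_1}\to 0$ identifies $H_\mideal^1(M)$ with $\varinjlim_k M/\theta_1^k M$ and shows that the natural degree-preserving map $M/\theta_1M\to H_\mideal^1(M)(-\ddd_1)$ has kernel $\bigcup_k\big(\theta_1^kM:_M\theta_1^{k-1}\big)/\theta_1M=\big(\theta_1M+H_\mideal^0(M)\big)/\theta_1M=\Sigma(\Theta;M)/\theta_1M$, which gives (ii).

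For the inductive step of (i), put $P=M/\theta_1 M$ and use the short exact sequences $0\to H_\mideal^0(M)(-\ddd_1)\to M(-\ddd_1)\xrightarrow{\theta_1}\theta_1M\to 0$ and $0\to\theta_1M\to M\to P\to 0$: since $\mideal H_\mideal^i(M)=0$ for $i<d$, their long exact sequences give $H_\mideal^i(\theta_1M)\cong H_\mideal^i(M)(-\ddd_1)$ for $i\ge1$ and realize $H_\mideal^i(P)$, for $i<d-1$, as an extension of $H_\mideal^{i+1}(M)(-\ddd_1)$ by $H_\mideal^i(M)$. Because each $W_C$ is killed by $\mideal$, the extensions that compute the $W_C$ split as sequences of $\field$-vector spaces, which is what produces the clean direct-sum form. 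Relating $\Sigma(\Theta;M)$ to $\Sigma(\bar\Theta;P)$, where $\bar\Theta$ is the image of $\theta_2,\dots,\theta_d$, through the rewriting above, reorganizing the modules $W_{[d]\setminus\{i\}}$ along the subset lattice --- each proper subset $D$ of $[d]$ contributing the summand $H_\mideal^{|D|}(M)(-\ddd_D)$ common to all $W_{[d]\setminus\{i\}}$ with $i\notin D$ --- and feeding in the case $d-1$ should yield $\Sigma(\Theta;M)/\Theta M\cong\bigoplus_{C\subsetneq[d]}H_\mideal^{|C|}(M)(-\ddd_C)$. For (ii), the \v{C}ech complex of $\Theta$ supplies a natural degree-preserving map $\psi\colon M/\Theta M\to H_\mideal^d(M)(-\ddd_{[d]})$ sending $m+\Theta M$ to the class of $m/(\theta_1\cdots\theta_d)$; a direct computation in $M_{\theta_1\cdots\theta_d}$ shows $\Sigma(\Theta;M)\subseteq\ker\psi$, and the reverse inclusion --- equivalently $\Theta^{[N]}M:_M(\theta_1\cdots\theta_d)^{N-1}=\Sigma(\Theta;M)$ for $N\gg0$, where $\Theta^{[N]}=(\theta_1^N,\dots,\theta_d^N)$ --- follows from the fact that a homogeneous system of parameters of a Buchsbaum module is an unconditioned strong $d$-sequence.

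\emph{Main obstacle.} The genuinely delicate points are the grading bookkeeping (flagged by the authors) and the failure of the Buchsbaum property to descend to the parameter quotient $P=M/\theta_1 M$, which is only quasi-Buchsbaum, so that the inductive hypothesis cannot be applied to $P$ directly. One must instead arrange for the Buchsbaum-ness of $M$ itself, not of any of its quotients, to supply every colon identity and $\mideal$-annihilation statement used, and then track carefully which shifted local-cohomology modules occur in each $W_C$ and how the $W_{[d]\setminus\{i\}}$ overlap inside $M/\Theta M$. Getting this combinatorial accounting correct, simultaneously with the grading, is the heart of the proof; the rest --- the base case, the rewriting of $\Sigma(\Theta;M)$, and the construction of $\psi$ together with the inclusion $\Sigma(\Theta;M)\subseteq\ker\psi$ --- is routine.
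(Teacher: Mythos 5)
Your proposal takes a genuinely different route from the paper's. The paper does not follow Goto's induction at all: its Appendix explicitly adapts the proof of \cite[Theorem 2.2]{NS}. It works with the quotients $M\langle C\rangle=M/((\theta_i:i\notin C)M)$, uses the short exact sequences coming from multiplication by $\theta_s$ on these quotients to build a ladder of connecting homomorphisms, defines the composites $\phi_k$ and $\psi_k$, and shows by a diagram chase that $\Ker(\phi_{d+1})=\Sigma(\Theta;M)/\Theta M$ while simultaneously identifying this kernel, via the injectivity of the maps $\pi_k^*$, with $\bigoplus_{k=1}^d H^{k-1}_\mideal(M\langle[k]\rangle)(-\ddd_{[k-1]})$; the clean direct sum over $C\subsetneq[d]$ then drops out by applying Lemma~\ref{A.1} to each summand and reindexing. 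Part (ii) is read off directly since $\phi_{d+1}\colon M/\Theta M\to H^d_\mideal(M)(-\ddd_{[d]})$ has kernel $\Sigma(\Theta;M)/\Theta M$.

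You, by contrast, propose an induction on $d$ through $P=M/\theta_1M$, as in Goto's local argument, and you correctly flag the chief difficulty: $P$ is only quasi-Buchsbaum, so the inductive hypothesis cannot simply be applied to $P$. But this is precisely the point at which your argument stops being a proof and becomes a plan. The sentence ``Relating $\Sigma(\Theta;M)$ to $\Sigma(\bar\Theta;P)$ \dots and feeding in the case $d-1$ should yield'' the direct-sum formula is where all the work lies, and it is not carried out. Moreover, there is a concrete obstruction hidden in your bookkeeping: you observe that each summand $H^{|D|}_\mideal(M)(-\ddd_D)$ occurs in $W_{[d]\setminus\{i\}}$ for \emph{every} $i\notin D$, i.e.\ in $d-|D|$ of the colon modules. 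Since $\Sigma(\Theta;M)/\Theta M$ is the \emph{sum} (not the direct sum) of the images of these modules inside $M/\Theta M$, one must show that these repeated copies are literally identified under the sum, not duplicated --- otherwise the dimension count would overshoot. This cancellation of overlaps is exactly what the paper's injective maps $\pi_k^*$ and the commuting squares \eqref{A-3}--\eqref{A-4} are engineered to control, and it is the step your sketch leaves entirely to the reader. Your treatment of the base case $d=1$ and of part~(ii) via the \v{C}ech complex and the u.s.d.-sequence property is plausible in outline, but as written the heart of part~(i) --- the reorganization of the $W_{[d]\setminus\{i\}}$ along the subset lattice --- remains an unresolved gap rather than a difference in method.
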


We are now in a position to prove Theorem \ref{BBMdual}.

\begin{proof}[Proof of Theorem \ref{BBMdual}]
We first prove that $R/\Sigma(\Theta;R)$ and $\omega_R/\Sigma(\Theta;\omega_R)$ have the same $\field$-dimension.
Indeed, 
\begin{align*}
\dim_\field \big( R /\Sigma(\Theta;R)\big)
&= \dim_\field (R/\Theta R) -\dim_\field \big(\Sigma(\Theta;R)/\Theta R\big)\\
&= e_\Theta(R) + \sum_{i=0}^{d-1} {d-1 \choose i} \dim_\field H_\mideal^i (R)-\sum_{i=0}^{d-1} { d \choose i} \dim_\field H_\mideal^i (R)\\
&=e_\Theta(R) -\sum_{i=1}^{d-1} {d-1 \choose i-1} \dim_\field H_\mideal^i(R),
\end{align*}
where we use Lemma \ref{2.1} and Theorem \ref{2.3}(i) for the second equality.
Similarly, since $\omega_R$ is Buchsbaum, the same computation yields
$$
\dim_\field \big( \omega_R /\Sigma(\Theta;\omega_R)\big)
=e_\Theta(\omega_R) -\sum_{i=1}^{d-1} {d-1 \choose i-1} \dim_\field H_\mideal^i(\omega_R).$$
Now, since $\depth(R) \geq 2$ by the assumptions of the theorem
and since $\depth(\omega_R)\geq 2$ always holds, see \cite[Lemma 1]{Ao80},
parts (ii) and (iv) of Lemma \ref{2.2} guarantee that
$$\dim_\field \big(R/\Sigma(\Theta;R) \big)= \dim_\field \big(\omega_R/\Sigma(\Theta;\omega_R)\big).$$

Thus, to complete the proof of the statement, it suffices to show that there is a surjection
from $R/\Sigma(\Theta;R)(+\ddd)$ to $(\omega_R/\Sigma(\Theta;\omega_R))^\vee$.
By Theorem \ref{2.3}(ii), there is an injection
$$\omega_R/\Sigma(\Theta;\omega_R) \to H_\mideal^d (\omega_R) (-\ddd).$$
 Dualizing and using Lemma \ref{2.2}(iii), we obtain a surjection
\begin{align}
\label{2-1}
R(+\ddd) \cong \left( H_\mideal^d(\omega_R)(-\ddd) \right)^\vee \to \big(\omega_R/\Sigma(\Theta;\omega_R)\big)^\vee.
\end{align}
Since $\omega_R$ is an $R$-module, it follows from the definition of $\Sigma(\Theta;\omega_R)$ that 
$\Sigma(\Theta;R)\cdot \omega_R \subseteq \Sigma(\Theta;\omega_R)$. Hence $$\Sigma(\Theta;R) \cdot \big(\omega_R/\Sigma(\Theta;\omega_R)\big)=0,$$ which in turn implies
\begin{align}
\label{2-2}
\Sigma(\Theta;R) \cdot \big(\omega_R/\Sigma(\Theta;\omega_R)\big)^\vee=0.
\end{align}
Now \eqref{2-1} and \eqref{2-2} put together guarantee the existence of a surjection
$$\big(R/\Sigma(\Theta;R)\big)(+\ddd) \to \big(\omega_R/\Sigma(\Theta;\omega_R)\big)^\vee,$$
as desired.
\end{proof}

\begin{remark}
The above proof also works in the local setting: it
shows that if $R$ is a Buchsbaum Noetherian local ring with $\depth\ \! R\geq 2$ (and if the canonical module of $R$ exists),
then $\omega_R/\Sigma(\Theta;\omega_R)$ is isomorphic to the Matlis dual of $R/\Sigma(\Theta;R)$. 
\end{remark}

\section{Duality in Stanley--Reisner rings of manifolds}

In this section, we study Buchsbaum Stanley--Reisner rings and modules.
Some objects in this section  such as homology groups, Betti numbers,  $h'$- and $h''$-numbers depend on the characteristic of $\field$; however, we fix  a field $\field$ throughout this section, and omit $\field$ from our notation.

We start by reviewing basics of simplicial complexes and Stanley--Reisner rings. A \textit{simplicial complex} $\Delta$ on $[n]$ is a collection of subsets of $[n]$ that is closed under inclusion. A \textit{relative simplicial complex} $\Psi$ on $[n]$ is a collection of subsets of $[n]$ with the property that there are simplicial complexes $\Delta \supseteq \Gamma$ such that $\Psi=\Delta\setminus \Gamma$. We identify such a pair of simplicial complexes $(\Delta,\Gamma)$ with the relative simplicial complex $\Delta \setminus \Gamma$.
Also, a simplicial complex $\Delta$ will be identified with $(\Delta,\emptyset)$.
A \textit{face} of $(\Delta,\Gamma)$ is an element of $\Delta \setminus \Gamma$. The \textit{dimension} of a face $\tau$ is its cardinality minus one, and the dimension of $(\Delta,\Gamma)$ is the maximal dimension of its faces.
A relative simplicial complex is said to be \textit{pure} if all its maximal faces have the same dimension.

We denote by $\widetilde H_i(\Delta,\Gamma)$ the $i$th reduced homology group of the pair $(\Delta,\Gamma)$ computed with coefficients in $\field$: when $\Gamma\ne\emptyset$, $\widetilde H_*(\Delta,\Gamma)$ is the usual relative homology of a pair and when $\Gamma=\emptyset$, $\widetilde H_*(\Delta,\Gamma)=\widetilde H_*(\Delta)$ is the reduced homology of $\Delta$.
 The Betti numbers of $(\Delta,\Gamma)$ are defined by $\tilde \beta_i(\Delta,\Gamma):=\dim_\field \widetilde H_i(\Delta,\Gamma)$.
If $\Delta$ is a simplicial complex on $[n]$ and $\tau \in \Delta$ is a face of $\Delta$, then the \textit{link} of $\tau$ in $\Delta$ is 
$$\lk_\Delta(\tau):=\{\sigma \in \Delta: \tau\cup\sigma \in \Delta,\ \tau \cap \sigma = \emptyset\}.$$
For convenience, we also define $\lk_\Delta(\tau)=\emptyset$ if $\tau \not \in \Delta$.

Let $\Delta$ be a simplicial complex on $[n]$.
The Stanley--Reisner ideal of $\Delta$ (in $S$) is the ideal
$$I_\Delta=(x^\tau: \tau \subseteq [n],\ \tau \not \in \Delta) \subseteq S,$$
where $x^\tau = \prod_{i \in \tau} x_i$.
If $(\Delta,\Gamma)$ is a relative simplicial complex, then the \textit{Stanley--Reisner module} of $(\Delta,\Gamma)$ is the $S$-module
$$\field[\Delta,\Gamma]=I_\Gamma/I_\Delta.$$
When $\Gamma = \emptyset$, the ring $\field [\Delta]=\field[\Delta,\emptyset]=S/I_\Delta$ is called the \textit{Stanley--Reisner ring} of $\Delta$.

A relative simplicial complex $(\Delta,\Gamma)$ is said to be \textit{Buchsbaum} if $\field[\Delta,\Gamma]$ is a Buchsbaum module.
The following characterization of Buchsbaum property was given by Schenzel \cite[Theorem 3.2]{Sc}; a proof for relative simplicial complexes appears in \cite[Theorem 1.11]{AS}.

\begin{theorem}[Schenzel]  \label{3.1}
A pure relative simplicial complex $(\Delta,\Gamma)$ of dimension $d$ is Buchsbaum if and only if, $\widetilde H_i\big(\lk_\Delta(\tau),\lk_\Gamma(\tau)\big)=0$ for every non-empty face $\tau \in \Delta\setminus \Gamma$ and all $i \ne d-|\tau|$.
\end{theorem}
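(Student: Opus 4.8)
The plan is to go through local cohomology: characterize the Buchsbaum property cohomologically, compute the multigraded local cohomology of $\field[\Delta,\Gamma]$ by a Hochster-type formula, and then read off the claimed topological condition. Write $D=d+1$ for the Krull dimension of $\field[\Delta,\Gamma]$. I would use the standard fact that a finitely generated graded $S$-module $M$ of Krull dimension $D$ that is Buchsbaum has \emph{finite local cohomology} (FLC), i.e.\ $\dim_\field H_\mideal^i(M)<\infty$ for all $i<D$, and that for Stanley--Reisner modules the converse also holds, so that FLC is equivalent to the Buchsbaum property (this equivalence is exactly what Schenzel's analysis supplies, and it uses the $\ZZ^n$-grading essentially). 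Thus the plan reduces to showing that the stated vanishing of relative homology of links is equivalent to $\field[\Delta,\Gamma]$ having FLC.

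The computational heart is a Hochster-type formula for the relative Stanley--Reisner module: for $\mathbf a\in\ZZ^n$ one has $H_\mideal^i(\field[\Delta,\Gamma])_{\mathbf a}=0$ unless $\mathbf a\le\mathbf 0$, and
\[
H_\mideal^i(\field[\Delta,\Gamma])_{-\mathbf b}\;\cong\;\widetilde H^{\,i-|\tau|-1}\!\big(\lk_\Delta(\tau),\lk_\Gamma(\tau)\big),\qquad \tau:=\supp(\mathbf b),
\]
for every $\mathbf b\in\NN^n$, where $\widetilde H^\bullet$ denotes reduced relative simplicial cohomology over $\field$ (whose dimensions agree with those of $\widetilde H_\bullet$). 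I would deduce this from the classical formula for $\field[\Delta]$ by feeding the short exact sequence $0\to\field[\Delta,\Gamma]\to\field[\Delta]\to\field[\Gamma]\to 0$ into the long exact local cohomology sequence: in each multidegree $-\mathbf b$ this sequence is, by naturality of Hochster's formula and because $\field[\Delta]\to\field[\Gamma]$ restricts to the inclusion-induced map on links, precisely the long exact cohomology sequence of the pair $(\lk_\Delta\tau,\lk_\Gamma\tau)$ up to the same shift, which forces the displayed isomorphism. Alternatively one computes directly with the \v{C}ech (Ishida) complex, identifying its $(-\mathbf b)$-strand with the shifted reduced relative cochain complex of $(\lk_\Delta\tau,\lk_\Gamma\tau)$.

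Finally I would read off the equivalence. Fix $i<D$. The degree-$\mathbf 0$ component of $H_\mideal^i(\field[\Delta,\Gamma])$ is $\widetilde H^{i-1}(\Delta,\Gamma)$, always finite-dimensional; for a nonempty face $\tau$ and $\mathbf b$ with $\supp(\mathbf b)=\tau$, the $(-\mathbf b)$-component is $\widetilde H^{i-|\tau|-1}(\lk_\Delta\tau,\lk_\Gamma\tau)$, and multiplication by $x_j$ (for $j\in\tau$, $b_j\ge2$) is an isomorphism onto the neighbouring piece of the same strand — the module is periodic of period one along each negative coordinate direction past the first step. Hence $H_\mideal^i(\field[\Delta,\Gamma])$ is finite-dimensional if and only if every such strand vanishes, i.e.\ $\widetilde H^{i-|\tau|-1}(\lk_\Delta\tau,\lk_\Gamma\tau)=0$ for all nonempty faces $\tau$. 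Letting $i$ range over $0,\dots,d$ this says $\widetilde H_j(\lk_\Delta\tau,\lk_\Gamma\tau)=0$ for all nonempty $\tau$ and all $j<d-|\tau|$; and since purity of $(\Delta,\Gamma)$ forces the relative complex $(\lk_\Delta\tau,\lk_\Gamma\tau)$ to have dimension at most $d-|\tau|$, the vanishing for $j>d-|\tau|$ is automatic, yielding the "$j\ne d-|\tau|$" form of the statement. The hard part will be the first paragraph's input — pinning down precisely why FLC coincides with the Buchsbaum property for relative Stanley--Reisner modules, together with the periodicity assertion in the Hochster formula; once that $\ZZ^n$-graded bookkeeping is in place, the rest is routine.
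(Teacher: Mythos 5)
The paper does not prove Theorem~\ref{3.1}; it is stated as a known result, with a citation to Schenzel and, for the relative case, to \cite[Theorem 1.11]{AS}. So there is no in-paper argument to compare against, and your proposal has to be judged on its own terms. Your strategy is the standard one: pair a Hochster-type $\ZZ^n$-graded description of $H^\bullet_\mideal(\field[\Delta,\Gamma])$ with a criterion for Buchsbaumness in terms of local cohomology. The computational steps you sketch are sound --- deriving the relative Hochster formula from $0\to\field[\Delta,\Gamma]\to\field[\Delta]\to\field[\Gamma]\to 0$ (or from the \v{C}ech complex directly), the periodicity of the negative strands, and the observation that the links can only have homology up to dimension $d-|\tau|$, which converts the vanishing range to the stated ``$i\ne d-|\tau|$'' form.

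The honest gap, which you flag, is the claim that FLC is equivalent to the Buchsbaum property for relative Stanley--Reisner modules, and you should be careful how you phrase it. In general FLC is strictly weaker than Buchsbaum, and the equivalence here is not a free-standing black box that one can invoke before doing the Hochster computation. What the Hochster formula actually buys you is stronger than FLC: it shows that the link vanishing is equivalent to $H^i_\mideal(\field[\Delta,\Gamma])$ being concentrated in multidegree $\mathbf 0$ for all $i<d+1$. Concentration in internal degree $0$ (in the coarse $\ZZ$-grading) then implies Buchsbaum by a criterion of St\"uckrad--Vogel; the paper itself appeals to exactly this in a later remark, citing \cite[Proposition I.3.10]{SV}. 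So the clean logical loop is: link vanishing $\Leftrightarrow$ concentration in degree $\mathbf 0$ $\Rightarrow$ Buchsbaum $\Rightarrow$ FLC $\Leftrightarrow$ link vanishing. Framing it as ``FLC $\Leftrightarrow$ Buchsbaum, plus FLC $\Leftrightarrow$ link vanishing'' obscures that the first equivalence is a \emph{consequence} of the second rather than an independent input. Once you supply the St\"uckrad--Vogel criterion (and note that the purity hypothesis guarantees equidimensionality, which the link condition alone need not force), the sketch becomes a complete and correct proof along the same lines as the cited sources.
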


\noindent In particular, if $\Delta$ is a homology manifold with boundary, then 
$\Delta$ and $(\Delta,\partial\Delta)$ are Buchsbaum (relative) simplicial complexes.  (However, most of Buchsbaum complexes are not homology manifolds.)

Next, we discuss face numbers of Buchsbaum simplicial complexes.
For a relative simplicial complex $(\Delta,\Gamma)$ of dimension $d-1$, let $f_i(\Delta,\Gamma)$ be the number of $i$-dimensional faces of $(\Delta,\Gamma)$ and let
$$h_j(\Delta,\Gamma)= \sum_{i=0}^j (-1)^{j-i} {d-i \choose d-j} f_{i-1}(\Delta,\Gamma) \quad \mbox{for $j=0,1,\dots,d$}. $$
For convenience, we also  define $h_j(\Delta,\Gamma)=0$ for $j> \dim(\Delta,\Gamma)+1$.
The $h$-numbers play a central role in the study of face numbers of Cohen--Macaulay simplicial complexes. On the other hand, for Buchsbaum simplicial complexes, the following modifications of  $h$-numbers, called $h'$-numbers and $h''$-numbers, behave better than the usual $h$-numbers.

Recall that a \textit{linear system of parameters} (or l.s.o.p.) is an h.s.o.p.\ $\Theta=\theta_1,\dots,\theta_d$ consisting of linear forms.
Note that when $\field$ is infinite, any finitely generated graded $S$-module has an l.s.o.p.
The following result, established by Schenzel \cite[\S 4]{Sc} (a proof for Stanley--Reisner modules appears in \cite[Theorem 2.5]{AS}), is known as Schenzel's formula.

\begin{theorem}[Schenzel]
\label{3.2}
Let $(\Delta,\Gamma)$ be a Buchsbaum relative simplicial complex of dimension $d-1$ and let $\Theta$ be an l.s.o.p.\ for $\field[\Delta,\Gamma]$. Then, for $j=0,1,\dots,d$,
$$\dim_\field \big(\field[\Delta,\Gamma]/ \Theta \field[\Delta,\Gamma] \big)_j=h_j(\Delta,\Gamma)
- {d \choose j} \sum_{i=1}^{j-1} (-1)^{j-i} \tilde \beta_{i-1}(\Delta,\Gamma).$$
\end{theorem}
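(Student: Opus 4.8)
Write $M=\field[\Delta,\Gamma]$; this is a Buchsbaum $S$-module of Krull dimension $d$, with Hilbert series $\Hilb(M,t)=\sum_{\sigma\in\Delta\setminus\Gamma}\bigl(t/(1-t)\bigr)^{|\sigma|}$, so that $(1-t)^d\Hilb(M,t)=\sum_{j\ge 0}h_j(\Delta,\Gamma)\,t^j$. Thus the asserted identity is equivalent to
$$\Hilb\bigl(M/\Theta M,\,t\bigr)=(1-t)^d\Hilb(M,t)-\sum_{j\ge 0}\binom{d}{j}\Bigl(\sum_{i=1}^{j-1}(-1)^{j-i}\tilde\beta_{i-1}(\Delta,\Gamma)\Bigr)t^j,$$
and the plan is to establish this by comparing $M/\Theta M$ with $(1-t)^d\Hilb(M,t)$ through the Koszul complex on $\Theta$, the discrepancy being governed by the local cohomology of $M$.

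The first ingredient I would record is the local cohomology of $M$: by the (relative) Hochster formula (see \cite{Gr84,AS}) combined with Schenzel's criterion (Theorem~\ref{3.1}), for each $i<d$ the module $H^i_\mideal(M)$ is concentrated in degree $0$, with $\dim_\field H^i_\mideal(M)_0=\tilde\beta_{i-1}(\Delta,\Gamma)$. Indeed, Hochster's formula writes $\dim_\field H^i_\mideal(M)_{-a}$ as a sum over faces $\tau$ with $|\tau|=a$ of Betti numbers of the relative links $(\lk_\Delta(\tau),\lk_\Gamma(\tau))$, and by Theorem~\ref{3.1} these relative links have nonzero reduced homology only in degree $d-1-|\tau|$; for $i<d$ this forces $a=0$ and $\tau=\emptyset$, leaving $\widetilde H^{i-1}(\Delta,\Gamma)$.

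Next I would run the Koszul-complex computation. Let $K_\bullet=K_\bullet(\Theta;M)$; since the $\theta_i$ are linear, $K_p\cong M(-p)^{\binom{d}{p}}$, and equating graded Euler characteristics of $K_\bullet$ with those of its homology gives $(1-t)^d\Hilb(M,t)=\sum_{p=0}^{d}(-1)^p\Hilb\bigl(H_p(K_\bullet),t\bigr)$, where $H_0(K_\bullet)=M/\Theta M$. For the higher Koszul homology I would invoke the known structure of the Koszul homology of a Buchsbaum module (St\"uckrad--Vogel \cite{SV}): for $p\ge 1$,
$$H_p(K_\bullet)\cong\bigoplus_{i\ge 0}\bigl(H^i_\mideal(M)(-p-i)\bigr)^{\binom{d}{p+i}}.$$
(Should a reference in this precise graded form be lacking, this can be derived by induction on $d$: the higher Koszul homology of a Buchsbaum module is annihilated by $\mideal$, so in the usual short exact sequences linking the Koszul homology of $\theta_1,\dots,\theta_k$ to that of $\theta_1,\dots,\theta_{k-1}$ the new parameter acts as $0$, and a degree count using the previous paragraph together with $\Hilb\bigl(H^0_\mideal(M/\theta M),t\bigr)=\Hilb\bigl(H^0_\mideal(M),t\bigr)+t\,\Hilb\bigl(H^1_\mideal(M),t\bigr)$ pins down the multiplicities and shifts.) Combining this with the previous paragraph, $\Hilb\bigl(H_p(K_\bullet),t\bigr)=\sum_{i=0}^{d-p}\binom{d}{p+i}\tilde\beta_{i-1}(\Delta,\Gamma)\,t^{p+i}$ for $p\ge 1$, so
$$\Hilb\bigl(M/\Theta M,t\bigr)=(1-t)^d\Hilb(M,t)-\sum_{p=1}^{d}(-1)^p\sum_{i=0}^{d-p}\binom{d}{p+i}\tilde\beta_{i-1}(\Delta,\Gamma)\,t^{p+i}.$$

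It then remains to read off the coefficient of $t^j$ for $0\le j\le d$. The terms of the double sum with $p+i=j$ contribute $\binom{d}{j}\sum_{p=1}^{j}(-1)^p\tilde\beta_{j-p-1}(\Delta,\Gamma)$, which on substituting $i=j-p$ equals $\binom{d}{j}\sum_{i=0}^{j-1}(-1)^{j-i}\tilde\beta_{i-1}(\Delta,\Gamma)$; the $i=0$ summand drops out because $\tilde\beta_{-1}(\Delta,\Gamma)=0$ (as $(\Delta,\Gamma)$ has dimension $\ge 0$), and comparison with $(1-t)^d\Hilb(M,t)=\sum_j h_j(\Delta,\Gamma)t^j$ yields the claim. (Consistency check: evaluating the last displayed identity at $t=1$ and using $\sum_{q=0}^{i}(-1)^q\binom{d}{q}=(-1)^i\binom{d-1}{i}$ recovers the length formula of Lemma~\ref{2.1}, as it must.) I expect the only genuinely non-formal step to be the cited structure of the Koszul homology of a Buchsbaum module; granting that, the rest is bookkeeping with Hilbert series and binomial identities.
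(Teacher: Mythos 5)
Your proof is correct, and it is necessarily ``different from the paper'' in the sense that the paper does not prove Theorem~\ref{3.2} at all: it is attributed to Schenzel~\cite{Sc} with a reference to~\cite{AS} for the relative version. So what you have written is a genuine, self-contained derivation rather than a re-proof.

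The route you take --- graded Euler characteristic of the Koszul complex, combined with the known graded structure of the Koszul homology of a Buchsbaum module --- is sound. Your identification of the local cohomology via the relative Hochster formula plus Schenzel's criterion is exactly the fact the paper itself uses (in the proof of Theorem~\ref{3.3}, citing~\cite[Theorem~1.8]{AS}), and the Euler characteristic of $K_\bullet(\Theta;M)$ is standard. You are also right that the only genuinely non-formal step is the isomorphism
\[
H_p(K_\bullet)\cong\bigoplus_{i\ge 0}\bigl(H^i_\mideal(M)(-p-i)\bigr)^{\binom{d}{p+i}}\qquad(p\ge 1),
\]
and your inductive derivation of it is valid: the short exact sequences
\[
0\to H_p(\theta';M)/\theta_k H_p(\theta';M)\to H_p(\theta;M)\to\bigl(0:_{H_{p-1}(\theta';M)}\theta_k\bigr)(-1)\to 0
\]
degenerate because for Buchsbaum $M$ the higher Koszul homology is $\mideal$-torsion, and because $0:_{M/(\theta')M}\theta_k=H^0_\mideal(M/(\theta')M)$; iterating the graded short exact sequence $0\to H^i_\mideal(M)\to H^i_\mideal(M/\theta M)\to H^{i+1}_\mideal(M)(-1)\to 0$ (for $i<d-1$) is precisely what the paper's Lemma~\ref{A.1} (from \cite[Lemma~II.4.14$'$]{SV}) packages, and it pins down the multiplicities and shifts as you say. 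Your final bookkeeping --- including the remark that the $i=0$ summand vanishes because $\tilde\beta_{-1}(\Delta,\Gamma)=0$ --- is correct. One small caveat: I would not lean too heavily on \cite{SV} as a reference for the Koszul homology formula in this precise graded form; your hedge (``should a reference be lacking, this can be derived\dots'') is appropriate, and if you wanted a citable statement you could instead invoke Lemma~\ref{A.1} and run the induction you sketched, which uses only facts already quoted in the paper.
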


In view of Schenzel's formula, we define the \textit{$h'$-numbers} of a $(d-1)$-dimensional relative simplicial complex $(\Delta,\Gamma)$ by
$$h_j'(\Delta,\Gamma)= h_j(\Delta,\Gamma) - {d \choose j} \sum_{i=1}^{j-1} (-1)^{j-i} \tilde \beta_{i-1}(\Delta,\Gamma).$$
Furthermore, we define the \textit{$h''$-numbers} of $(\Delta,\Gamma)$ by
\begin{align*}
h_j''(\Delta,\Gamma)=
\begin{cases}
h_j'(\Delta,\Gamma) - {d \choose j} \tilde \beta_{j-1}(\Delta,\Gamma), &\mbox{if } 0 \leq j <d,\\
h_d'(\Delta,\Gamma), &\mbox{if } j=d.
\end{cases}
\end{align*}

We are now ready to prove Theorem \ref{3.3}.

\begin{proof}[Proof of Theorem \ref{3.3}]
Let $M=\field[\Delta,\Gamma]$.
Observe that
\begin{align*}
\dim_\field \big(M/\Sigma(\Theta;M)\big)_j
&= \dim_\field (M/\Theta M)_j - \dim_\field \big(\Sigma(\Theta;M)/\Theta M\big)_j.
\end{align*}
Since $H_\mideal^i(M)=(H_\mideal^i(M))_0\cong \widetilde H_{i-1}(\Delta,\Gamma)$ for $i<d$ (see \cite[Theorem 1.8]{AS}),
Theorem \ref{2.3}(i) implies
$$\dim_\field \big(\Sigma(\Theta;M)/\Theta M\big)_j= 
\left\{ \begin{array}{ll} 
{d \choose j} \tilde \beta_{j-1} (\Delta,\Gamma), & \mbox{ if $0 \leq j\leq d-1$},\\
0, & \mbox{ if $j\geq d$.}
\end{array}
\right.
$$
The desired statement then follows from Theorem \ref{3.2} asserting that $\dim_\field (M/\Theta M)_j= h_j'(\Delta,\Gamma)$ for all $j$.
\end{proof}

It was proved in \cite[Theorem 3.4]{NS} that 
the $j$th graded component of the socle of $\field[\Delta]/\Theta \field[\Delta]$ has dimension at least ${d \choose j} \tilde \beta_{j-1}(\Delta)$.
This implies that the $h''$-numbers of a Buchsbaum simplicial complex form the Hilbert function of some quotient of its Stanley--Reisner ring.
A new contribution and the significance of Theorem \ref{3.3} is that it provides an explicit algebraic interpretation of  $h''$-numbers via a submodule $\Sigma(\Theta;-)$.

In the rest of this section we discuss a few algebraic and combinatorial applications of our results.
As was proved by Kalai and, independently, Stanley (see \cite[Ch.~III, \S 9]{St} and \cite{St93}), if $\Delta \supseteq \Gamma$ are Cohen--Macaulay simplicial complexes of the same dimension, then $h_i(\Delta) \geq h_i(\Gamma)$ for all $i$.
The interpretation of the $h''$-numbers given in Theorem \ref{3.3} allows us to prove the following generalization of this fact.

\begin{theorem}
\label{3.7}
Let $\Delta \supseteq \Gamma$ be Buchsbaum simplicial complexes of the same dimension.
Then $h_i''(\Delta) \geq h_i''(\Gamma)$ for all $i$.
\end{theorem}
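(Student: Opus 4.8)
The plan is to deduce the monotonicity of $h''$-numbers from the algebraic interpretation in Theorem \ref{3.3} together with the natural short exact sequence relating the Stanley--Reisner modules of $\Delta$, $\Gamma$, and the pair $(\Delta,\Gamma)$. Since $\Delta$ and $\Gamma$ have the same dimension $d-1$, a generic l.s.o.p.\ $\Theta=\theta_1,\dots,\theta_d$ can be chosen so that it is simultaneously a linear system of parameters for $\field[\Delta]$, for $\field[\Gamma]$, and for $\field[\Delta,\Gamma]$; here one uses that $\field$ is infinite and that $\Gamma$ has dimension $d-1$, so $\field[\Delta,\Gamma]$ has Krull dimension $d$ and $\field[\Gamma]$ is a quotient of $\field[\Delta]$. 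First I would record the short exact sequence of graded $S$-modules
\begin{align*}
0 \longrightarrow \field[\Delta,\Gamma] \longrightarrow \field[\Delta] \longrightarrow \field[\Gamma] \longrightarrow 0,
\end{align*}
coming from $I_\Gamma/I_\Delta \hookrightarrow S/I_\Delta \twoheadrightarrow S/I_\Gamma$. By Schenzel's theorem (Theorem \ref{3.1}) all three modules are Buchsbaum, so Theorem \ref{3.3} applies to each.

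Next I would pass to the quotients by the modules $\Sigma(\Theta;-)$. The key point is that the inclusion $\Sigma(\Theta;\field[\Delta])\cdot$ behaves well under the maps of the exact sequence: the image of $\Sigma(\Theta;\field[\Delta])$ in $\field[\Gamma]$ lands inside $\Sigma(\Theta;\field[\Gamma])$ (directly from the definition of $\Sigma$, since colon ideals and the submodule $\Theta M$ map into their counterparts), and similarly $\Sigma(\Theta;\field[\Delta,\Gamma])$ is contained in the preimage of $\Sigma(\Theta;\field[\Delta])$. Applying the snake lemma, or simply right-exactness of the quotient construction, yields a surjection
\begin{align*}
\field[\Delta]/\Sigma(\Theta;\field[\Delta]) \longrightarrow \field[\Gamma]/\Sigma(\Theta;\field[\Gamma]) \longrightarrow 0.
\end{align*}
Taking graded dimensions and invoking Theorem \ref{3.3} gives $h_i''(\Delta)=\dim_\field(\field[\Delta]/\Sigma(\Theta;\field[\Delta]))_i \geq \dim_\field(\field[\Gamma]/\Sigma(\Theta;\field[\Gamma]))_i = h_i''(\Gamma)$ for all $i$, which is exactly the assertion. (For $i>d$ both sides vanish, and for $i\le d$ this is Theorem \ref{3.3}.)

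I expect the main obstacle to be the second step: verifying that the maps in the exact sequence are compatible with the submodules $\Sigma(\Theta;-)$, so that one genuinely gets a surjection on the quotients rather than just a map with uncontrolled kernel and cokernel. Concretely, one must check that for $\phi\colon \field[\Delta]\to\field[\Gamma]$ one has $\phi\big(\Sigma(\Theta;\field[\Delta])\big)\subseteq \Sigma(\Theta;\field[\Gamma])$; the $\Theta M$ part is immediate, while for a colon term one needs that if $\theta_i m \in (\theta_1,\dots,\widehat{\theta_i},\dots,\theta_d)\field[\Delta]$ then $\theta_i\phi(m) \in (\theta_1,\dots,\widehat{\theta_i},\dots,\theta_d)\field[\Gamma]$, which follows by applying $\phi$. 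One also has to be slightly careful that the chosen $\Theta$ is a valid l.s.o.p.\ for all three modules at once; a standard genericity argument handles this. Once these compatibilities are in place, the inequality falls out directly from Theorem \ref{3.3}, with no further computation needed.
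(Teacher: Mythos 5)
Your argument is correct and is essentially the paper's: a common l.s.o.p.\ exists because $\field$ is infinite and $\dim\Delta=\dim\Gamma$, and the surjection $\field[\Delta]\to\field[\Gamma]$ carries $\Sigma(\Theta;\field[\Delta])$ into $\Sigma(\Theta;\field[\Gamma])$ (the paper phrases the same fact by writing both quotients as $S$ modulo ideals and noting that $I_\Gamma\supseteq I_\Delta$ forces containment of those ideals). The only imprecision is your aside that $\field[\Delta,\Gamma]$ is Buchsbaum by Schenzel's theorem: that does not follow from $\Delta$ and $\Gamma$ each being Buchsbaum, since Schenzel's criterion for the pair concerns the relative homology of links, which need not vanish in the right degrees. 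Fortunately this claim is never used — your argument only needs the induced surjection on quotients by $\Sigma(\Theta;-)$, together with Theorem \ref{3.3} applied to $\Delta$ and $\Gamma$ separately — so the proof stands once that sentence and the reliance on the short exact sequence are dropped.
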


\begin{proof}
We may assume that $\Delta$ and $\Gamma$ are simplicial complexes on $[n]$
and that $\field$ is infinite.
Let $d=\dim \Delta +1$.
Then there is a common linear system of parameters $\Theta=\theta_1,\dots,\theta_d$ for $\field[\Delta]$ and $\field [\Gamma]$. By the definition of $\Sigma(\Theta;-)$, 
$$\textstyle
\field[\Delta]/\Sigma(\Theta;\field[\Delta]) = S/ \left((\Theta)+ \big(\sum_{k=1}^d \big( (\theta_1,\dots,\hat \theta_k,\dots,\theta_d)+ I_\Delta\big):_{S}\theta_k\big) \right)$$
and an analogous formula holds for $\field[\Gamma]/\Sigma(\Theta;\field[\Gamma]) $.
Since $I_\Gamma \supseteq I_\Delta$,
the above formula implies that $\field[\Gamma]/\Sigma(\Theta;\field[\Gamma])$
is a quotient ring of $\field[\Delta]/\Sigma(\Theta;\field[\Delta]) $.
The desired statement then follows from Theorem \ref{3.3}.
\end{proof}

\begin{corollary}
\label{4.2}
Let $\Delta$ be a Buchsbaum simplicial complex and let $\tau \in \Delta$ be any non-empty face. Then
$h''_i(\Delta) \geq h_i(\lk_\Delta(\tau))$ for all $i$.
\end{corollary}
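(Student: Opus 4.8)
The plan is to reduce the statement to Theorem \ref{3.7} by finding a Buchsbaum subcomplex of $\Delta$, on the same vertex set and of the same dimension, whose $h''$-numbers equal the $h$-numbers of $\lk_\Delta(\tau)$. The natural candidate is the closed star of $\tau$, namely $\st_\Delta(\tau)=\{\sigma\in\Delta:\sigma\cup\tau\in\Delta\}$, or rather a carefully chosen subcomplex built from it. First I would recall that $\st_\Delta(\tau)$ is a cone (with apex any vertex of $\tau$, or with apex set $\tau$ iterated), hence Cohen--Macaulay, and that its $h$-vector coincides with the $h$-vector of $\lk_\Delta(\tau)$ shifted appropriately; in fact, since $\st_\Delta(\tau)\cong\lk_\Delta(\tau)\Ast 2^\tau$ (the join with the full simplex on $\tau$), a cone has $h$-vector equal to that of its base, so $h_i(\st_\Delta(\tau))=h_i(\lk_\Delta(\tau))$ for all $i$, once dimensions are matched. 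Being Cohen--Macaulay, $\st_\Delta(\tau)$ is Buchsbaum and its $h''$-numbers equal its $h$-numbers, i.e.\ $h''_i(\st_\Delta(\tau))=h_i(\lk_\Delta(\tau))$.

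Next I would address the dimension and vertex-set bookkeeping: to apply Theorem \ref{3.7} we need a containment $\Delta\supseteq\Gamma$ of Buchsbaum complexes \emph{of the same dimension}. The subcomplex $\st_\Delta(\tau)$ has dimension equal to $\dim\Delta$ precisely when $\tau$ lies in a facet of maximal dimension, which holds because $\Delta$, being Buchsbaum, is pure (a Buchsbaum complex is pure by Schenzel's criterion, Theorem \ref{3.1}, applied to facets). So $\st_\Delta(\tau)$ is a pure subcomplex of the same dimension $d-1$ as $\Delta$; moreover it can be regarded as a complex on the same ground set $[n]$ (vertices of $[n]$ not in $\st_\Delta(\tau)$ are simply absent, which does not change the Stanley--Reisner ring up to the irrelevant polynomial variables, and one checks the relative $h''$-numbers are unaffected). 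Thus Theorem \ref{3.7} applies with $\Gamma=\st_\Delta(\tau)$, giving $h''_i(\Delta)\ge h''_i(\st_\Delta(\tau))=h_i(\lk_\Delta(\tau))$ for all $i$.

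The step I expect to require the most care is the identification $h''_i(\st_\Delta(\tau))=h_i(\lk_\Delta(\tau))$, together with the claim that passing to a complex on a possibly smaller vertex set does not alter the relevant invariants. For the first point one uses that a cone is acyclic, so all reduced Betti numbers $\tilde\beta_j(\st_\Delta(\tau))$ vanish, whence $h''=h'=h$ for the star; and the cone identity for $h$-vectors then finishes it. For the second point, if $\st_\Delta(\tau)$ actually uses only vertices in a subset $W\subsetneq[n]$, one notes that adjoining the "missing" variables $x_i$, $i\notin W$, as new parameters does not change the quotient $\field[\st_\Delta(\tau)]/\Sigma(\Theta;\cdot)$ in the relevant degrees, so the $h''$-numbers computed on $[n]$ and on $W$ agree. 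Once these two routine verifications are in place, Corollary \ref{4.2} follows immediately from Theorem \ref{3.7} and Theorem \ref{3.3}.
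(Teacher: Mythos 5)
Your proof is correct and follows essentially the same route as the paper: take $\Gamma=\st_\Delta(\tau)$, note it is Cohen--Macaulay of the same dimension as $\Delta$ (using that Buchsbaum complexes are pure), invoke Theorem \ref{3.7}, and finish with the cone identity $h_i(\st_\Delta(\tau))=h_i(\lk_\Delta(\tau))$ together with the fact that $h''=h$ for Cohen--Macaulay complexes. The extra bookkeeping you flag (purity of $\Delta$, passing to a possibly smaller vertex set) is handled correctly and is merely made implicit in the paper's terser write-up.
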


\begin{proof}
Consider the star of $\tau$, $\st_\Delta(\tau)=\{ \sigma \in \Delta: \sigma \cup \tau  \in \Delta\}$.
Then  $\st_\Delta(\tau)$ is Cohen--Macaulay (by Theorem \ref{3.1} and Reisner's criterion) and has the same dimension as $\Delta$, and so by Theorem \ref{3.7}, $h''_i(\Delta)\geq h''_i(\st_\Delta(\tau))$ for all $i$. The result follows since
$\lk_\Delta(\tau)$ and $\st_\Delta(\tau)$ have the same $h$-numbers (this is because $\st_\Delta(\tau)$ is just a cone over $\lk_\Delta(\tau)$, cf.~\cite[Corollary III.9.2]{St}) and  since for Cohen--Macaulay simplicial complexes the $h''$-numbers coincide with the  $h$-numbers.
\end{proof}

Let $R=S/I$ be a graded $\field$-algebra. The \textit{$a$-invariant} of $R$ is the number
$$a(R)=-\min\{ k: (\omega_R)_k \ne 0\}.$$
This number is an important invariant in commutative algebra.
When $R$ is Cohen--Macaulay, it is well-known that 
$a(R)=\max\{i:h_i(R)\ne 0\}-\dim R$, where $h_i(R)$ is the $i$th $h$-number of $R$.
(See \cite[\S 4.1]{BH} for the definition of $h$-numbers for modules.)
The following result provides a generalization of this fact.

\begin{theorem}
\label{ainvariant}
Let $R=S/I$ be a Buchsbaum graded $\field$-algebra of Krull dimension $d$ with $\depth(R) \geq 2$ and let $\Theta=\theta_1,\dots,\theta_d$ be an l.s.o.p.~for $R$.
Then
$$
\textstyle a(R)= \max\{k: (R/\Sigma(\Theta;R))_k \ne 0\} - d .$$
In particular, for any connected Buchsbaum simplicial complex $\Delta$ of dimension $d-1$,
$a(\field[\Delta])=\max\{k: h''_k(\Delta) \ne 0\} -d$.
\end{theorem}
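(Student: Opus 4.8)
The plan is to compute $a(R)$ directly from the definition $a(R) = -\min\{k : (\omega_R)_k \neq 0\}$ by relating the bottom of $\omega_R$ to the top of $R/\Sigma(\Theta;R)$, using the duality of Theorem \ref{BBMdual}. First I would apply Theorem \ref{2.3}(i) to $\omega_R$ (which is Buchsbaum by Lemma \ref{2.2}(i)): this gives $\Sigma(\Theta;\omega_R)/\Theta\omega_R \cong \bigoplus_{C \subsetneq [d]} H_\mideal^{|C|}(\omega_R)(-\ddd_C)$, where here $\ddd_i = 1$ for all $i$ since $\Theta$ is linear, so $\ddd_C = |C|$ and $\ddd = \ddd_{[d]} = d$. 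Since $\depth(R) \geq 2$, Lemma \ref{2.2}(ii) identifies $H_\mideal^i(\omega_R) \cong (H_\mideal^{d-i+1}(R))^\vee$ for $2 \leq i \leq d-1$, and $H_\mideal^0(\omega_R) = H_\mideal^1(\omega_R) = 0$ because $\depth(\omega_R) \geq 2$ always. The key point is that each $H_\mideal^i(R)$ for $i < d$ is concentrated in a single degree (for Stanley--Reisner modules it sits in degree $0$; in general for Buchsbaum algebras all lower local cohomology is finite-dimensional, but one needs to be a little careful about which degrees appear — see below).

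Next I would track degrees carefully. The module $R/\Sigma(\Theta;R)$ is a quotient of $R$, hence concentrated in nonnegative degrees, and Theorem \ref{BBMdual} gives $\omega_R/\Sigma(\Theta;\omega_R) \cong (R/\Sigma(\Theta;R))^\vee(-d)$. Taking Matlis dual reverses and negates degrees, so if $\max\{k : (R/\Sigma(\Theta;R))_k \neq 0\} = m$, then $(R/\Sigma(\Theta;R))^\vee$ lives in degrees $-m$ up to $0$, and $(R/\Sigma(\Theta;R))^\vee(-d)$ lives in degrees $d-m$ up to $d$; in particular $\min\{k : (\omega_R/\Sigma(\Theta;\omega_R))_k \neq 0\} = d - m$. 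It then remains to argue that $\min\{k : (\omega_R)_k \neq 0\} = \min\{k : (\omega_R/\Sigma(\Theta;\omega_R))_k \neq 0\}$, i.e. that passing to the quotient by $\Sigma(\Theta;\omega_R)$ does not kill the socle-most (lowest-degree) part of $\omega_R$. Granting this, $a(R) = -(d-m) = m - d$, which is the claim. The ``in particular'' then follows from Theorem \ref{3.3}, which identifies $(R/\Sigma(\Theta;R))_k$ with $h''_k(\Delta)$ for $R = \field[\Delta]$.

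For the remaining point, I would argue that $\Sigma(\Theta;\omega_R) \subseteq \mideal \cdot \omega_R$ automatically (it is contained in $\Theta\omega_R + (\text{colon submodules})$, and the colon submodules $(\theta_1,\dots,\hat\theta_i,\dots,\theta_d)\omega_R :_{\omega_R} \theta_i$ are contained in $\mideal\omega_R$ since $\omega_R$ has no lower local cohomology in degrees $0,1$ — equivalently, $\Theta$ is a weak $\omega_R$-sequence of ``length $\geq 2$ worth of depth'', so the relevant colon ideals are killed by $\mideal$ and lie inside $\mideal\omega_R$). Therefore the lowest-degree component of $\omega_R$ survives in the quotient $\omega_R/\Sigma(\Theta;\omega_R)$, giving the needed equality of minima. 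Alternatively, and perhaps more cleanly, I can avoid this altogether: from Theorem \ref{2.3}(ii) there is an injection $\omega_R/\Sigma(\Theta;\omega_R) \hookrightarrow H_\mideal^d(\omega_R)(-d)$, whose Matlis dual $R(+d) \cong (H_\mideal^d(\omega_R)(-d))^\vee \twoheadrightarrow (\omega_R/\Sigma(\Theta;\omega_R))^\vee$ is exactly the surjection built in the proof of Theorem \ref{BBMdual}; so I already know $\omega_R/\Sigma(\Theta;\omega_R) \hookrightarrow H_\mideal^d(\omega_R)(-d)$, and since $\depth(R)\geq 2$ we have $(H_\mideal^d(\omega_R))^\vee \cong R$ by Lemma \ref{2.2}(iii), giving $\min\{k:(H_\mideal^d(\omega_R))_k\neq 0\} = \min\{k:(\omega_R)_k\neq 0\} = -a(R)$; combining the degree shift with the $\mideal$-primary nature of $R/\Sigma(\Theta;R)$ pins down that the bottom degree of $\omega_R/\Sigma(\Theta;\omega_R)$ equals the bottom degree of $\omega_R(-d)$ shifted appropriately. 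I expect the main obstacle to be precisely this last bookkeeping step — ensuring that quotienting by $\Sigma(\Theta;-)$ on the $\omega_R$ side does not raise the bottom degree — and I would handle it via the containment $\Sigma(\Theta;\omega_R)\subseteq\mideal\omega_R$ together with Nakayama's lemma.
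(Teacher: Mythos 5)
Your setup is exactly the paper's: use Theorem~\ref{BBMdual} to see that $\min\{k:(\omega_R/\Sigma(\Theta;\omega_R))_k\neq 0\}=d-m$ where $m=\max\{k:(R/\Sigma(\Theta;R))_k\neq 0\}$, then reduce the claim to showing that quotienting $\omega_R$ by $\Sigma(\Theta;\omega_R)$ does not raise the bottom degree. That last reduction, though, is precisely where the content of the paper's proof lives, and your proposal does not close it.

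The step you lean on --- the containment $\Sigma(\Theta;\omega_R)\subseteq\mideal\omega_R$ --- is not established, and your justification does not hold up. You argue that the colon submodules $(\theta_1,\dots,\hat\theta_i,\dots,\theta_d)\omega_R:_{\omega_R}\theta_i$ lie inside $\mideal\omega_R$ because $H^0_\mideal(\omega_R)=H^1_\mideal(\omega_R)=0$; but the vanishing of low local cohomology of $\omega_R$ says nothing a priori about whether a minimal generator of $\omega_R$ can land in one of these colons. Being killed by $\mideal$ modulo $(\theta_1,\dots,\hat\theta_i,\dots,\theta_d)\omega_R$ and lying in $\mideal\omega_R$ are different conditions, and nothing in Lemma~\ref{2.2} rules out a minimal generator $y$ of $\omega_R$ (in some degree above the bottom) satisfying $\theta_i y\in(\theta_1,\dots,\hat\theta_i,\dots,\theta_d)\omega_R$. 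Your ``cleaner'' alternative via the injection of Theorem~\ref{2.3}(ii) also stalls: that injection and Lemma~\ref{2.2}(iii) just reproduce the duality of Theorem~\ref{BBMdual}, and the vague appeal to ``the $\mideal$-primary nature of $R/\Sigma(\Theta;R)$'' is not a degree argument.

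What is actually needed --- and what the paper proves --- is only a degree bound, not the full containment: it suffices to show $\big(\Sigma(\Theta;\omega_R)/\Theta\omega_R\big)_k=0$ for all $k\leq d-m-1$, since then the surjection $\omega_R/\Theta\omega_R\to\omega_R/\Sigma(\Theta;\omega_R)$ is an isomorphism in those degrees and Nakayama (applied to $\Theta$) identifies the bottom degree of $\omega_R$ with $d-m$. The paper gets this bound by a feedback argument: from $\mideal\cdot(\Sigma(\Theta;R)/\Theta R)=0$ and the fact that $(R/\Theta R)_k=(\Sigma(\Theta;R)/\Theta R)_k$ for $k\geq m+1$, one deduces $(\Sigma(\Theta;R)/\Theta R)_k=0$ for $k\geq m+2$, hence via Theorem~\ref{2.3}(i) that $H^i_\mideal(R)_j=0$ for $i\leq d-1$ and $j\geq m+2-i$; dualizing through Lemma~\ref{2.2}(ii) and re-expanding $\Sigma(\Theta;\omega_R)/\Theta\omega_R$ by Theorem~\ref{2.3}(i) (with only $|C|\geq 2$ contributing because $\depth\omega_R\geq 2$) yields exactly the required vanishing in degrees $\leq d-m-1$. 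Your proposal gestures at ``one needs to be careful about which degrees appear,'' but that caution is the whole proof; as written, the argument has a genuine gap.
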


\begin{proof}
Let $m=\max\{ k: (R/\Sigma(\Theta;R))_k \ne 0\}$.
Then by Theorem \ref{BBMdual},
$$\min\{k: ( \omega_R/\Sigma(\Theta;\omega_R) )_k \ne 0\} =d - m.$$
As $\min\{k : (\omega_R) _k \ne 0\}=\min\{k : (\omega_R/\Theta \omega_R) _k \ne 0\}$, the theorem would follow if we prove that 
$\min\{k : (\omega_R/\Theta \omega_R) _k \ne 0\} = d -m.$
To this end, it is enough to show that
\begin{align}
\label{ainv1}
\big( \Sigma(\Theta;\omega_R)/\Theta \omega_R \big)_k =0 \ \ \ \mbox{ for } k \leq d - m-1.
\end{align}

Since $\mideal H_\mideal^i(R)=0$ for $i <d$ (see \cite[Proposition I.2.1]{SV}), we conclude from Theorem~\ref{2.3}(i) that
$\mideal (\Sigma(\Theta;R)/\Theta R)=0$.
Furthermore, since $(R/\Theta R)_k = (\Sigma(\Theta;R)/\Theta R)_k$ for $k \geq m+1$, it follows that for $k \geq m+2$,
$$
\big(\Sigma(\Theta;R)/\Theta R\big)_{k}
= (R/\Theta R)_{k}
=\big(\mideal(R/\Theta R)\big)_{k}
=\big(\mideal \big(\Sigma(\Theta;R)/\Theta R \big)\big)_{k}=0.$$
The isomorphism 
$$\Sigma(\Theta;R)/\Theta R \cong \bigoplus_{C \subsetneq [d]} H_\mideal^{|C|} (R) (-|C|)$$
established in Theorem \ref{2.3}(i) then implies that 
$$H_\mideal^{i}(R)_j=0 \quad \mbox{for all $i \leq d-1$ and $j\geq m+2-i$}.$$
Therefore, 
$$\big( H_\mideal^{d-i+1}(R)^\vee (-i) \big)_k=0 \quad
\mbox{for all $2 \leq i \leq d-1$ and $-(k- i)\geq m+2-(d-i+1) $},$$
and so 
\begin{align}
\label{ainv2}
\big( H_\mideal^{d-i+1}(R)^\vee (-i) \big)_k=0 \quad
\mbox{for all $2 \leq i \leq d-1$ and $k \leq  d - m -1$}.
\end{align}

Finally, since $\depth(\omega_R) \geq 2$, we infer from Theorem \ref{2.3}(i) and Lemma \ref{2.2}(ii) that
$$ \Sigma(\Theta;\omega_R)/\Theta \omega_R \cong \bigoplus_{C \subsetneq [d],\ \! |C| \geq 2}H_\mideal^{|C|} (\omega_R) (-|C|) \cong \bigoplus_{C \subsetneq [d],\ \! |C| \geq 2} H_\mideal^{d-|C|+1}(R)^\vee (-|C|).$$
The above isomorphisms and \eqref{ainv2} then yield the
desired property \eqref{ainv1}.
\end{proof}

Observe that by Hochster's formula on local cohomology \cite[Theorem 5.3.8]{BH}, if $\Delta$ is a $(d-1)$-dimensional Buchsbaum simplicial complex, then $-a(\FF[\Delta])$ equals the minimum cardinality of a face whose link has a non-vanishing top homology. Thus the ``in particular" part of Theorem \ref{ainvariant} can be equivalently restated as follows: if for every face $\tau\in\Delta$ of dimension $< d-k$, the link of $\tau$ has vanishing top homology, then $h''_{k}(\Delta)=0$. For the case of homology manifolds with boundary, faces with vanishing top homology are precisely the boundary faces; in this case, the above statement reduces to \cite[Theorem 3.1]{MNe}.

A sequence $h_0,h_1,\dots,h_m$ of numbers is said to be \textit{unimodal} if there is an index $p$ such that $h_0 \leq h_1\leq \cdots \leq h_p \geq \cdots \geq h_m$. It was proved in \cite{Mu} that the $h''$-numbers of the barycentric subdivision of any connected Buchsbaum simplicial complex form a unimodal sequence. Here we use the Matlis duality established in Theorem  \ref{BBMdual} to generalize this result to Buchsbaum polyhedral complexes.
(We refer our readers to \cite{Mu} for the definition of barycentric subdivisions.)

\begin{theorem}  \label{baryc-subdiv}
Let $\Delta$ be the barycentric subdivision of a connected polyhedral complex $\Gamma$ of dimension $d-1$.
Suppose that the characteristic of $\field$ is zero
and $\Delta$ is Buchsbaum. Then, for a generic choice of linear forms $\Theta=\theta_1,\dots,\theta_d$, and  $\theta_{d+1} \in \field[\Delta]$, the multiplication
$$\times \theta_{d+1}: \big(\field[\Delta]/\Sigma(\Theta;\field[\Delta])\big)_i
\to \big(\field[\Delta]/\Sigma(\Theta;\field[\Delta])\big)_{i+1}$$
is injective for $i \leq {\frac{d}{2}-1}$
and is surjective for $i \geq {\frac {d} 2}$.
In particular, the sequence $h_0''(\Delta),h_1''(\Delta),\dots,h_d''(\Delta)$ is unimodal.
\end{theorem}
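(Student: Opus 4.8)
The plan is to reduce the statement to the known Lefschetz-type property of barycentric subdivisions and then transport it through the Matlis duality of Theorem \ref{BBMdual}. First I would recall from \cite{Mu} that when the characteristic of $\field$ is zero and $\Gamma$ is a connected polyhedral complex with $\Delta$ its barycentric subdivision and $\Delta$ Buchsbaum, one has a weak-Lefschetz-type statement for the $h''$-vector: for a generic l.s.o.p.\ $\Theta$ and a generic linear form $\theta_{d+1}$, multiplication by $\theta_{d+1}$ on $A:=\field[\Delta]/\Sigma(\Theta;\field[\Delta])$ is injective in degrees $i\le \frac d2-1$ and surjective in degrees $i\ge\frac d2$. (If the cited paper only states unimodality, I would instead invoke the underlying weak Lefschetz statement there, or re-derive it via the semigroup/edgewise-subdivision machinery used in \cite{Mu}; in any case, by Theorem \ref{3.3}, $\dim_\field A_i=h_i''(\Delta)$, so this is precisely the statement for $i$ in the stated ranges.) Given this, the injectivity-then-surjectivity pattern immediately yields unimodality of $(h_0'',\dots,h_d'')$: injectivity forces $h_i''\le h_{i+1}''$ for $i\le\frac d2-1$, and surjectivity forces $h_i''\ge h_{i+1}''$ for $i\ge\frac d2$.

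The one genuinely new ingredient is the symmetry that makes the ``first half injective, second half surjective'' dichotomy consistent, and this is where Theorem \ref{BBMdual} enters. Since $\Delta$ is a connected Buchsbaum complex, $R:=\field[\Delta]$ satisfies $\depth R\ge 2$ and is Buchsbaum, so Theorem \ref{BBMdual} gives a graded isomorphism $\omega_R/\Sigma(\Theta;\omega_R)\cong \big(R/\Sigma(\Theta;R)\big)^\vee(-d)=A^\vee(-d)$. Multiplication by $\theta_{d+1}$ on $A$ is dual to multiplication by $\theta_{d+1}$ on $A^\vee$, so surjectivity of $\times\theta_{d+1}$ in the top half of $A$ is equivalent to injectivity of $\times\theta_{d+1}$ in the bottom half of $A^\vee(-d)\cong\omega_R/\Sigma(\Theta;\omega_R)$, and conversely. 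Thus it suffices to prove injectivity in degrees $\le\frac d2-1$ on one of the two modules: the duality supplies the surjectivity half of the other for free, and, when combined with the Lefschetz property for $\Delta$ itself, it pins down that the two halves glue at the middle degree $\lceil d/2\rceil$. Concretely, I would establish injectivity of $\times\theta_{d+1}$ on $A$ in degrees $i\le\frac d2-1$ directly from \cite{Mu} (this is the part that genuinely uses the barycentric-subdivision hypothesis and $\mathrm{char}\,\field=0$), and then obtain surjectivity in degrees $i\ge\frac d2$ by dualizing via Theorem \ref{BBMdual}, using that a generic $\theta_{d+1}$ remains generic for the dual module.

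The main obstacle is the first step: producing the injectivity of $\times\theta_{d+1}$ in the lower half for $\Delta$ the barycentric subdivision of a merely \emph{polyhedral} (not simplicial) Buchsbaum complex $\Gamma$. In \cite{Mu} the argument for simplicial $\Gamma$ passes through a flag-complex/order-complex description and an explicit combinatorial l.s.o.p.\ coming from the colouring by rank, together with a decomposition of $A$ as a direct sum indexed by faces of $\Gamma$ whose summands are (shifted) Artinian reductions of Cohen--Macaulay order complexes, on which the hard-Lefschetz-type input is available in characteristic zero. I would need to check that this decomposition and the genericity argument survive verbatim when $\Gamma$ is polyhedral: the barycentric subdivision of a polyhedral complex is still the order complex of its face poset, hence a flag (balanced) simplicial complex, so the rank colouring and the associated special l.s.o.p.\ still exist, and the Buchsbaum hypothesis on $\Delta$ lets us apply Schenzel's formula and Theorem \ref{3.3} as before; the summands are again order complexes of intervals $[\hat 0,\sigma]$ in the face poset, which are Cohen--Macaulay, so the same characteristic-zero Lefschetz input applies. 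Once that carries over, the duality step and the unimodality conclusion are formal.
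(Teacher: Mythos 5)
Your proposal correctly identifies the two ingredients---a Lefschetz-type statement coming from barycentric subdivisions in characteristic zero, combined with the Matlis duality of Theorem~\ref{BBMdual}---but the way you propose to wire them together has two genuine gaps.

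First, the duality of Theorem~\ref{BBMdual} relates $A:=\field[\Delta]/\Sigma(\Theta;\field[\Delta])$ to a \emph{different} module, $B:=\omega_{\field[\Delta]}/\Sigma(\Theta;\omega_{\field[\Delta]})$, not to $A$ itself. Injectivity of $\times\theta_{d+1}$ on $A$ in the lower half dualizes to surjectivity on $B$ in the upper half; it does \emph{not} give you surjectivity on $A$. Your claim that ``it suffices to prove injectivity in degrees $\le d/2-1$ on one of the two modules: the duality supplies the surjectivity half of the other for free'' is therefore not enough: after proving injectivity on $A$ and dualizing, you still have no surjectivity statement about $A$ (only about $B$) and no injectivity statement about $B$ (only about $A$). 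You need a direct argument on \emph{both} $R$ and $\omega_R$, or on one of them in \emph{both} halves. The appeal to ``the Lefschetz property for $\Delta$ itself'' pinning things down at the middle is too vague to close this.

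Second, and more subtly, you propose to get the injectivity on $A$ directly from the machinery of \cite{Mu}. Whatever Lefschetz statement is available there is naturally proved for the ordinary Artinian reduction $\field[\Delta]/\Theta\field[\Delta]$, and injectivity does \emph{not} descend from $\field[\Delta]/\Theta\field[\Delta]$ to the further quotient $A=\field[\Delta]/\Sigma(\Theta;\field[\Delta])$. This is exactly why the paper's proof runs the argument in the opposite direction: it uses the squarefree $P$-module theorem \cite[Theorem~6.2(ii)]{MY} to get \emph{surjectivity} on both $\field[\Delta]/\Theta\field[\Delta]$ and $\omega_{\field[\Delta]}/\Theta\omega_{\field[\Delta]}$ (both $\field[\Delta]$ and $\omega_{\field[\Delta]}$ being squarefree $P$-modules over the face poset $P$ of $\Gamma$, by \cite[Theorem~3.1]{MY}); surjectivity trivially survives the further quotient by $\Sigma(\Theta;\cdot)\supseteq\Theta\cdot(\cdot)$, giving surjectivity in the top half for both $A$ and $B$; and \emph{then} Theorem~\ref{BBMdual} converts the surjectivity on $B$ into the injectivity on $A$ in the lower half. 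Note also that the squarefree $P$-module framework handles the polyhedral (non-simplicial) $\Gamma$ automatically, which is cleaner than re-running the decomposition from \cite{Mu} as you suggest, and it is essential that the framework applies uniformly to $R$ \emph{and} to $\omega_R$---your sketch never addresses why the canonical module admits the needed structure.
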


\begin{proof}
By genericity of linear forms, $\Theta=\theta_1,\dots,\theta_d$
is a common l.s.o.p.\ for $\field[\Delta]$ and $\omega_{\field[\Delta]}$.
Let $P$ be the face poset of $\Gamma$.
Then the Stanley--Reisner ring $\FF[\Delta]$ is a squarefree $P$-module (a notion introduced in \cite[Definition 2.1]{MY}); furthermore, by \cite[Theorem 3.1]{MY} $\omega_{\field[\Delta]}$ is also a squarefree $P$-module.
It then follows from \cite[Theorem 6.2 (ii)]{MY} that the multiplication maps
\begin{equation} \label{surj-mult}
\times \theta_{d+1}: \big(\field[\Delta]/\Theta\field[\Delta]\big)_i
\to \big(\field[\Delta]/\Theta\field[\Delta]\big)_{i+1} 
\end{equation}
and
$$\times \theta_{d+1}: \big(\omega_{\field[\Delta]}/(\Theta\omega_{\field[\Delta]})\big)_i
\to \big(\omega_{\field[\Delta]}/(\Theta \omega_{\Theta\field[\Delta]})\big)_{i+1}.$$
are surjective for {$i \geq \frac {d} 2$}.
(While Cohen--Macaulayness was assumed in \cite[Theorem 6.2]{MY}, 
one can see from the proof given in \cite{MY} that this assumption was used only in the proof of part (i) and is unnecessary to derive surjectivity.)
Since $\Theta\field[\Delta]$ is contained in $\Sigma(\Theta;\field[\Delta])$,
the map in \eqref{surj-mult} remains surjective if we replace $\Theta\field[\Delta]$ with $\Sigma(\Theta;\field[\Delta])$,
and a similar statement holds for $\omega_{\field[\Delta]}$.
These surjectivities and the Matlis duality 
$\field[\Delta]/\Sigma(\Theta;\field[\Delta]) \cong
(\omega_{\field[\Delta]}/\Sigma(\Theta;\omega_{\field[\Delta]}))^\vee ({d})$ of Theorem \ref{BBMdual} yield the desired statement.
\end{proof}

In fact, in view of results from \cite{KM}, it is tempting to conjecture that if $\Delta$ is a barycentric subdivision of a Buchsbaum regular CW-complex of dimension $d-1$, then even the sequence $h''_0(\Delta)/{d \choose 0}, h''_1(\Delta)/{d \choose 1}, h''_2(\Delta)/{d \choose 2}, \ldots, h''_d(\Delta)/{d \choose d}$ is unimodal.

We close this section with a couple of remarks.

\begin{remark}
Our results on $h''$-vectors can be generalized to the following setting.
Consider a finitely generated graded $S$-module $M$ of Krull dimension $d$
such that 
\begin{align}
\label{condition}
H^i_\mideal(M)=\big(H^i_\mideal(M)\big)_0\ \  \mbox{ for all $0 \leq i < d$}.
\end{align}
Define the $h'$- and $h''$-numbers of $M$ in the same way as for the Stanley--Reisner modules but with $\dim_\field (H_\mideal^{j}(M))$ used as a replacement for $\tilde \beta_{j-1}(\Delta,\Gamma)$. Then suitably modified statements of Theorems \ref{3.3} and \ref{3.2} continue to hold for such an $M$. 
Furthermore, if $M$ satisfies \eqref{condition}, then $M$ must be Buchsbaum (see \cite[Proposition I.3.10]{SV}), in which case $\omega_M$ also satisfies \eqref{condition} by Lemma \ref{2.2}(ii).
In particular, if $\Delta$ is an arbitrary connected Buchsbaum simplicial complex of dimension $d-1$, then
$$h_i''(\omega_{\field[\Delta]})=h_{d-i}''(\field[\Delta]) \quad \mbox{for all $i=0,1,\ldots,d$}.$$
\end{remark}

\begin{remark}  \label{without-bdry}
A statement analogous to Corollary \ref{MFDdual} also holds for homology manifolds without boundary. Indeed, if $\Delta$ is an orientable homology manifold without boundary, then by Gr\"abe's result \cite{Gr84}, 
$\field[\Delta]$ is isomorphic to its own canonical module, and hence, by Theorem \ref{BBMdual}, $\field[\Delta]/\Sigma(\Theta;\field[\Delta])$ is an Artinian Gorenstein algebra.
This fact was essentially proved in \cite[Theorem 1.4]{NS2}.

 Let us also point out that if  $\Delta$ is a connected orientable homology manifold with or without boundary, then for $M=\FF[\Delta]$, the statement of part (ii) of Lemma \ref{2.2}  is a simple consequence of the Poincar\'e--Alexander--Lefschetz duality along with Gr\"abe's result \cite{Gr84} that $\omega_M\cong \FF[\Delta,\partial\Delta]$. 
\end{remark}

\section{Applications to the manifold $g$-conjecture}

In this section we discuss connected orientable homology manifolds without boundary. One of the most important open problems in algebraic combinatorics is the algebraic $g$-conjecture; it asserts that every homology sphere has the weak Lefschetz property (the WLP, for short).  Kalai proposed a far-reaching generalization of this conjecture \cite[Conjecture 7.5]{No} to homology manifolds. Using the $\Sigma(\Theta,-)$ module allows to restate Kalai's conjecture as follows.

\begin{conjecture}
\label{mfdgcong}
Let $\Delta$ be a connected orientable $\FF$-homology $(d-1)$-manifold without boundary.
Then, for a generic choice of linear forms $\Theta=\theta_1,\dots,\theta_d$,
the ring $\FF[\Delta]/\Sigma(\Theta; \FF[\Delta])$ has the WLP, that is, for a generic linear form $\omega$, the multiplication 
\[
\times \omega \, : \, \big(\FF[\Delta]/\Sigma(\Theta; \FF[\Delta])\big)_{\lfloor d/2 \rfloor} \to \big(\FF[\Delta]/\Sigma(\Theta; \FF[\Delta])\big)_{\lfloor d/2 \rfloor + 1}
\]
is surjective.
\end{conjecture}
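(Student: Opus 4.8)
The plan is to reduce Conjecture \ref{mfdgcong} to the algebraic $g$-conjecture for homology spheres of dimension $d-2$; I do not expect to settle it outright, since that sphere case is precisely the input that remains open. The first step is to place the ring $A:=\FF[\Delta]/\Sigma(\Theta;\FF[\Delta])$ in the Gorenstein setting. Since $\Delta$ is a connected orientable homology $(d-1)$-manifold without boundary, Gr\"abe's result gives $\omega_{\FF[\Delta]}\cong\FF[\Delta]$ (Remark \ref{without-bdry}), so $\FF[\Delta]$ satisfies the hypotheses of Theorem \ref{BBMdual} and therefore $A$ is an Artinian Gorenstein $\FF$-algebra. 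By Theorem \ref{3.3} its Hilbert function is $(h_0''(\Delta),\dots,h_d''(\Delta))$, and by Theorem \ref{ainvariant}, together with $a(\FF[\Delta])=0$ (which holds because $\omega_{\FF[\Delta]}\cong\FF[\Delta]$), its socle is concentrated in degree $d$. By the self-duality of such an $A$, the surjectivity of $\times\omega\colon A_{\lfloor d/2\rfloor}\to A_{\lfloor d/2\rfloor+1}$ asserted in the conjecture is equivalent to the injectivity of $\times\omega$ from degree $\lceil d/2\rceil-1$ to degree $\lceil d/2\rceil$, i.e.\ to the ``middle'' Lefschetz condition forcing $h''(\Delta)$ to be unimodal and to satisfy the $g$-inequalities; in this shape it coincides with Kalai's conjecture \cite[Conjecture 7.5]{No}. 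I would also record that Theorem \ref{baryc-subdiv} already confirms the unimodality of $h''(\Delta)$ --- and, when $d$ is even, the full surjectivity above --- for $\Delta$ a barycentric subdivision over a field of characteristic zero, which is evidence that $A$ is the right Artinian reduction to work with.

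The substantive step is an induction on $d$ through vertex links. For a vertex $v$ of $\Delta$, $\lk_\Delta(v)$ is a connected orientable homology $(d-2)$-sphere, for which the algebraic $g$-conjecture predicts that its generic Artinian reduction has the WLP. The idea is to transport this local information to $A$: using the cone and Mayer--Vietoris decompositions of $\FF[\Delta]$ in terms of $\st_\Delta(v)$ and $\FF[\Delta\setminus v]$ (the same mechanism underlying Corollary \ref{4.2}), together with the description of $\Sigma(\Theta;-)$ from Theorem \ref{2.3}, one would propagate maximal rank of $\times\omega$ in the critical degree from the links to $A$ itself, extending the local-to-global Lefschetz arguments surveyed in \cite{Sw,KN} from the numerical $h''$-inequalities to the full Lefschetz map.

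The main obstacle is the base case of this induction, namely the algebraic $g$-conjecture for homology $(d-2)$-spheres, which is known only for polytopal spheres and a handful of further families (see \cite{Sw,KN}); absent that input, Conjecture \ref{mfdgcong} can be obtained only conditionally, or unconditionally for manifolds built by operations --- such as the bistellar flips and stellar subdivisions treated below --- that are shown to preserve the WLP. A secondary, purely bookkeeping, point is that one must argue throughout with the quotient by $\Sigma(\Theta;-)$ rather than by $\Theta$; here Theorems \ref{3.3} and \ref{BBMdual} do the work, ensuring that this passage loses no numerical or duality-theoretic information, so that the inductive machinery runs with $A$ in place of the usual Artinian reduction.
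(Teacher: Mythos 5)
You correctly recognize that the statement is a conjecture (Kalai's manifold $g$-conjecture, reformulated via $\Sigma(\Theta;-)$) and not a theorem of the paper, and your preliminary observations are accurate: by Remark \ref{without-bdry} the ring $A=\FF[\Delta]/\Sigma(\Theta;\FF[\Delta])$ is Artinian Gorenstein, by Theorem \ref{ainvariant} (or by the Dehn--Sommerville symmetry $h''_i(\Delta)=h''_{d-i}(\Delta)$) its socle sits in degree $d$, and hence the asserted surjectivity at $\lfloor d/2\rfloor$ is equivalent, via Gorenstein self-duality, to injectivity below the middle. Your reading of Theorem \ref{baryc-subdiv} is also right. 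None of that, however, constitutes a proof, and the paper does not claim one; its contribution on this front is Theorems \ref{flips} and \ref{stellar-subdiv}, which show that certain PL operations preserve the WLP.

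The genuine gap is in the \emph{local-to-global} step of your proposed induction, and it is not a bookkeeping matter. There is no known mechanism that transports the WLP from the vertex links $\lk_\Delta(v)$ to $A$. Corollary \ref{4.2} and the Kalai--Stanley monotonicity only propagate \emph{numerical} inequalities, not surjectivity of a multiplication map. The only exact sequence for $\Sigma$-quotients available in the paper is Lemma \ref{exact}, and combined with Lemma \ref{snake} it transfers WLP information across the middle degree only when the excised homology ball has $h$-vector constant near $\lfloor d/2\rfloor$ --- as happens for $\overline{A}\ast\partial\overline{B}$ with $p$ away from the middle in Theorem \ref{flips}, but not for $\Gamma=\st_\Delta(v)$, whose $h$-vector equals that of $\lk_\Delta(v)$ and is in general strictly increasing up to degree $\lfloor(d-1)/2\rfloor$, so the bijectivity hypothesis of Lemma \ref{snake} fails at exactly the degree that matters. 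Consequently your concluding claim --- that absent the sphere $g$-conjecture the manifold conjecture "can be obtained only conditionally" --- is too optimistic: granting the WLP for \emph{every} homology sphere does not, by the route you sketch, close the induction, and in fact Conjecture \ref{mfdgcong} was open at the time of this paper even for manifolds all of whose vertex links are polytope boundaries (where the sphere WLP was already known). What the paper delivers unconditionally is precisely the preservation of the WLP under the controlled moves of Theorems \ref{flips} and \ref{stellar-subdiv}, and that works only because the excised pieces there have the special $h$-vectors required by Lemma \ref{snake}.
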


It is worth mentioning that while Kalai's original statement of the conjecture did not involve $\Sigma(\Theta,-)$, the two statements are equivalent  (see \cite{NS2} and Remark \ref{without-bdry} above). Somewhat informally, we say that $\Delta$ (or $\FF[\Delta]$) has the WLP if $\Delta$ satisfies the conclusions of the above conjecture.
Enumerative consequences of this conjecture are discussed in \cite[\S1]{NS2}. 

Given a finite set $A$, we denote by $\overline{A}$ the simplex on $A$, i.e., the simplicial complex whose set of faces consists of all subsets of $A$.  When $A=\{a\}$ consists of a single vertex, we write $\overline{a}$ to denote the vertex $a$, viewed as a $0$-dimensional simplex.  If $\Delta$ and $\Gamma$ are two simplicial complexes on disjoint vertex sets, then the {\em join} of $\Delta$ and $\Gamma$, $\Delta \ast \Gamma$, is the simplicial complex defined by
\[\Delta \ast \Gamma := \{ \sigma\cup \tau \,:\, \sigma\in \Delta, \tau\in\Gamma\}.\]
Finally, if $\Delta$ is a simplicial complex and $W$ is a set, then $\Delta_W=\{\sigma\in \Delta \, : \, \sigma\subseteq W\}$ is the subcomplex of $\Delta$ induced by $W$.

Let $\Delta$ be a $(d-1)$-dimensional homology manifold, and let $A$ and $B$ be disjoint 
subsets such that $|A|+|B|=d+1$. If $\Delta_{A\cup B}=\overline{A}\ast \partial\overline{B}$, then the operation of removing $\overline{A}\ast \partial\overline{B}$ from $\Delta$ and replacing it with $\partial\overline{A}\ast\overline{B}$ is called a {\em $(|B|-1)$-bistellar flip}. (For instance, a $0$-flip is simply a stellar subdivision at a facet.) The resulting complex is a homology manifold homeomorphic to the original complex.

The following surprising property was proved by Pachner: if $\Delta_1$ and $\Delta_2$ are two PL homeomorphic combinatorial manifolds without boundary, then they can be connected by a sequence of bistellar flips (see \cite{Pachner-87,Pachner-91}). Since the boundary complex of a simplex has the WLP, Pachner's result suggests the following inductive approach to the algebraic $g$-conjecture for PL spheres: prove that bistellar flips applied to PL spheres preserve the WLP. 
In \cite[\S 3]{Sw}, Swartz showed that most of bistellar flips applied to homology {\em spheres} preserve the WLP. Here we extend Swartz's results to the generality of orientable homology manifolds without boundary:

\begin{theorem} \label{flips}
Let $\Delta$ be a $(d-1)$-dimensional, connected, orientable homology manifold without boundary. Suppose that $\Delta'$ is obtained from $\Delta$ via a $(p-1)$-bistellar flip with $p\neq (d+1)/2$ if $d$ is odd and with $p\notin\{d/2, (d+2)/2\}$ if $d$ is even, and let $\Theta$ be a common l.s.o.p.\ for $\FF[\Delta]$ and $\FF[\Delta']$. Then $\FF[\Delta']/\Sigma(\Theta; \FF[\Delta'])$ has the WLP if and only if $\FF[\Delta]/\Sigma(\Theta; \FF[\Delta])$ has the WLP.
\end{theorem}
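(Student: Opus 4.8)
The plan is to reduce Theorem~\ref{flips} to a purely local comparison between $\FF[\Delta]/\Sigma(\Theta;\FF[\Delta])$ and $\FF[\Delta']/\Sigma(\Theta;\FF[\Delta'])$ supported near the faces affected by the flip. Write $A,B$ for the disjoint vertex sets with $|A|+|B|=d+1$ and $|B|=p$, so that $\Delta_{A\cup B}=\overline A\ast\partial\overline B$ is replaced by $\partial\overline A\ast\overline B$ to produce $\Delta'$. First I would record that $\FF[\Delta]$ and $\FF[\Delta']$ share the monomial-free part: they differ only in that $I_{\Delta'}=I_\Delta+(x^A)$ loses the generator $x^B$ and gains $x^A$ (writing $x^A=\prod_{i\in A}x_i$). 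The strategy of Swartz, which I would follow, is to pass to Artinian reductions and then to exhibit short exact sequences relating the two Artinian algebras to a common ``intermediate'' object, namely the Artinian reduction of the Stanley--Reisner ring of the complex $\Delta$ with the face $A$ removed (equivalently, $\Delta$ with $\overline A\ast\overline B$ removed). The key point is that, modulo a generic l.s.o.p.\ $\Theta$, these sequences let one compare Hilbert functions and multiplication maps degree by degree, and the WLP of one side is equivalent to that of the other precisely in the range of degrees where the flip does not straddle the ``middle'' degree $\lfloor d/2\rfloor$ --- this is where the hypothesis $p\neq(d+1)/2$ (resp.\ $p\notin\{d/2,(d+2)/2\}$) enters.

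The new ingredient compared to \cite[\S3]{Sw} is that we must run this argument with $\Sigma(\Theta;-)$ in place of $\Theta(-)$, since for manifolds (as opposed to spheres) $\FF[\Delta]/\Theta\FF[\Delta]$ is not the right object. Here I would use Theorem~\ref{3.3}: the Hilbert function of $\FF[\Delta]/\Sigma(\Theta;\FF[\Delta])$ is the $h''$-vector, and by Theorem~\ref{2.3}(i) the submodule $\Sigma(\Theta;\FF[\Delta])/\Theta\FF[\Delta]$ is a direct sum of (shifted) copies of the lower local cohomology modules $H^i_\mideal(\FF[\Delta])\cong\widetilde H_{i-1}(\Delta)$, each annihilated by $\mideal$. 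Because a bistellar flip does not change the homeomorphism type, the Betti numbers $\tilde\beta_{i-1}(\Delta)=\tilde\beta_{i-1}(\Delta')$ agree, so $\Sigma(\Theta;\FF[\Delta])/\Theta\FF[\Delta]$ and $\Sigma(\Theta;\FF[\Delta'])/\Theta\FF[\Delta']$ have the same Hilbert function; moreover, since these ``extra socle'' pieces sit in degrees $\le d-1$ and are killed by $\mideal$, passing from $\FF[\Delta]/\Theta\FF[\Delta]$ to $\FF[\Delta]/\Sigma(\Theta;\FF[\Delta])$ only truncates low-degree socle and does not disturb the multiplication maps $\times\omega$ in the relevant middle degrees. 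Thus a generic Lefschetz element for $\FF[\Delta]/\Theta\FF[\Delta]$ in the middle degree induces one for $\FF[\Delta]/\Sigma(\Theta;\FF[\Delta])$ and conversely, and the flip-comparison sequences descend from the $\Theta$-quotients to the $\Sigma$-quotients.

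Concretely, the steps would be: (1) set up the common l.s.o.p.\ $\Theta$ and the generic Lefschetz element $\omega$, and reformulate the WLP of each side in terms of surjectivity of $\times\omega$ from degree $\lfloor d/2\rfloor$ to $\lfloor d/2\rfloor+1$ on the $\Sigma$-quotient; (2) establish, via Theorem~\ref{2.3}(i), Theorem~\ref{3.3}, and the topological invariance of Betti numbers under bistellar flips, that one may equivalently test surjectivity on $\FF[\Delta]/\Theta\FF[\Delta]$ and $\FF[\Delta']/\Theta\FF[\Delta']$ --- i.e.\ that the $\Sigma$-correction is invisible in the middle; (3) produce the exact sequences relating $\FF[\Delta]/\Theta\FF[\Delta]$ and $\FF[\Delta']/\Theta\FF[\Delta']$ to the Artinian reduction of the Stanley--Reisner ring of the complex obtained by deleting $\overline A\ast\overline B$ (these are the standard ``Mayer--Vietoris'' type sequences $0\to (x^A)\to \FF[\Delta']\to \FF[\Delta'']\to 0$ and its analogue for $\Delta$ after the flip); (4) chase these sequences together with the genericity of $\omega$ to conclude that surjectivity in the middle degree for one quotient forces it for the other, using the degree hypothesis on $p$ to guarantee that the shifts introduced by the ideals $(x^A)$, $(x^B)$ place the critical comparison strictly away from the middle so that nothing is lost. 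The main obstacle is step~(4): keeping track of exactly which degrees the generators $x^A$ (of degree $|A|=d+1-p$) and $x^B$ (of degree $p$) affect, and verifying that under the stated numerical restriction on $p$ the maps one needs to be iso/surjective in the diagram chase really are forced by genericity --- this is precisely the delicate bookkeeping that made \cite[\S3]{Sw} nontrivial, and the only genuinely new check here is confirming that replacing $\Theta$ by $\Sigma(\Theta;-)$ throughout (via Theorems~\ref{3.3} and~\ref{2.3}) leaves that bookkeeping intact.
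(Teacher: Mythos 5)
Your step (2) is where the argument breaks. You propose to replace the $\Sigma(\Theta;-)$-quotients by the ordinary Artinian reductions $\field[\Delta]/\Theta\field[\Delta]$ and $\field[\Delta']/\Theta\field[\Delta']$, on the grounds that the extra socle $\Sigma(\Theta;\field[\Delta])/\Theta\field[\Delta]$ is $\mideal$-torsion concentrated in degrees $\le d-1$ and therefore ``invisible in the middle.'' This is not an equivalence. Quotienting $N=\field[\Delta]/\Theta\field[\Delta]$ by the submodule $K=\Sigma(\Theta;\field[\Delta])/\Theta\field[\Delta]$ can only make it \emph{easier} for $\times\omega$ to be surjective from degree $\lfloor d/2\rfloor$ to $\lfloor d/2\rfloor+1$; the reverse implication is what you would need and it is false in general. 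By Theorem~\ref{2.3}(i) together with Hochster/Gr\"abe, $K_{\lfloor d/2\rfloor+1}\cong\widetilde H_{\lfloor d/2\rfloor}(\Delta)^{\binom{d}{\lfloor d/2\rfloor+1}}$, which is typically nonzero for a manifold with middle homology, and there is nothing forcing these socle classes to lie in $\omega\cdot N_{\lfloor d/2\rfloor}$. (Socle elements \emph{can} lie in $\mideal N$ --- think of $x\in\field[x]/(x^2)$ --- but that is a coincidence, not a theorem.) Indeed, the whole reason the manifold $g$-conjecture is stated in terms of the $\Sigma$-quotient and not $\Theta$-quotient is precisely that the two Lefschetz questions are \emph{not} interchangeable for manifolds; taking them to be equivalent would be assuming the hard part.

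The paper's proof never makes this reduction. Its key new ingredient is Lemma~\ref{exact}, which produces a short exact sequence \emph{directly at the level of $\Sigma$-quotients}:
\[
0 \to \field[D,\partial D]/\Sigma\big(\Theta;\field[D,\partial D]\big) \to \field[\Delta]/\Sigma\big(\Theta;\field[\Delta]\big) \to \field[\Gamma]/\Theta\field[\Gamma] \to 0,
\]
with $\Gamma$ one of the full-dimensional homology balls $\overline{A}\ast\partial\overline{B}$ or $\partial\overline{A}\ast\overline{B}$, and $D$ the homology manifold with boundary obtained by removing the interior of $\Gamma$ from $\Delta$. Two features of this should be contrasted with your step (3). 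First, the middle object is the \emph{relative} Stanley--Reisner module $\field[D,\partial D]$, not the Stanley--Reisner ring $\field[D]$; it is $\field[D,\partial D]$, not $\field[D]$, that sits as an ideal inside $\field[\Delta]$ and makes the sequence exact. Second, proving exactness of the displayed sequence is \emph{not} routine bookkeeping: one needs both an algebraic containment $\Sigma(\Theta;\field[D,\partial D])\subseteq \field[D,\partial D]\cap\Sigma(\Theta;\field[\Delta])$ (to get a surjection onto the kernel) and a dimension count matching $h''_j(D,\partial D)$ against $h''_j(\Delta)-h_j(\Gamma)$, which in turn uses Theorem~\ref{3.3}, the identity $h_j(D,\partial D)=h_j(\Delta)-h_j(\Gamma)$, and the excision isomorphism $\widetilde H_i(D,\partial D)\cong\widetilde H_i(\Delta)$. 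With Lemma~\ref{exact} in hand, the snake-lemma chase and the computation $\field[\Gamma_1]/\Theta\cong\field[x]/(x^p)$, $\field[\Gamma_2]/\Theta\cong\field[x]/(x^{d-p+1})$ --- giving bijectivity of $\times\omega$ in the middle degree under the hypothesis on $p$ --- is indeed the part of your step (4) that is correct in spirit. One smaller slip: $I_{\Delta'}\neq I_\Delta+(x^A)$; the flip adds $x^A$ as a minimal generator but also removes $x^B$ (since $B\in\Delta'\setminus\Delta$), so the two Stanley--Reisner ideals are incomparable.
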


If $\Delta$ is a simplicial complex and $\sigma$ is a face of $\Delta$, then the \textit{stellar subdivision} of $\Delta$ at $\sigma$ consists of (i) removing $\sigma$ and all faces containing it from $\Delta$, (ii) introducing a new vertex $a$, and (iii) adding new faces in $\overline{a}\ast\partial\overline{\sigma}\ast\lk_\Delta(\sigma)$ to $\Delta$:
\[
\sd_\sigma(\Delta):= \big(\Delta \setminus \st_\Delta(\sigma)\big) \cup \big(\overline{a}\ast\partial\overline{\sigma}\ast\lk_\Delta(\sigma)\big).
\] 
A classical result due to Alexander \cite{Alexander} asserts that two simplicial complexes are PL homeomorphic if and only if they are stellar equivalent, that is, one of them can be obtained from another by a sequence of stellar subdivisions and their inverses. Thus, a different approach to the algebraic $g$-conjecture (at least for PL manifolds) is to show that the WLP is preserved by stellar subdivisions and their inverses.
B\"ohm and Papadakis \cite[Corollary 4.5]{BP} proved that this is the case if one applies stellar subdivisions at faces of sufficiently large dimension (or their inverses) to homology spheres. We extend their result to the generality of orientable homology manifolds without boundary:

\begin{theorem} \label{stellar-subdiv}
Let $\Delta$ be a $(d-1)$-dimensional, connected, orientable $\FF$-homology manifold without boundary and let $\sigma$ be a face of $\Delta$ with $\dim\sigma>d/2$.
Then $\FF[\Delta]$ has the WLP if and only if $\FF[\sd_\sigma(\Delta)]$ has the WLP.
\end{theorem}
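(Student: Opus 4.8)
\textbf{Proof proposal for Theorem \ref{stellar-subdiv}.}
The plan is to reduce a stellar subdivision at a large face to a sequence of bistellar flips of large type, and then invoke Theorem \ref{flips}. First I would recall the standard fact (see \cite{Pachner-87} or the combinatorial topology literature) that a stellar subdivision $\sd_\sigma(\Delta)$ at a face $\sigma$ with $\dim\sigma=p-1$ can be realized as a composition of bistellar flips; more precisely, writing $\lk_\Delta(\sigma)$ as built up from the empty complex, one sees that introducing the new vertex $a$ in $\overline{a}\ast\partial\overline{\sigma}\ast\lk_\Delta(\sigma)$ amounts to performing, for each facet $\rho$ of $\lk_\Delta(\sigma)$, a bistellar flip that exchanges $\overline{\sigma}\ast\{\rho\}$-type data. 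The key numerical point is that each of these flips is a $q$-bistellar flip with $q-1$ at least $\dim\sigma$, hence with $q>d/2$, which for $d$ even avoids $\{d/2,(d+2)/2\}$ and for $d$ odd avoids $(d+1)/2$ once $\dim\sigma>d/2$ is in force; thus every flip in the sequence falls under the hypothesis of Theorem \ref{flips}.

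Next I would choose a single common linear system of parameters $\Theta$ for all the Stanley--Reisner rings arising along this sequence of flips, which is possible by genericity since $\field$ is infinite and only finitely many complexes are involved. Theorem \ref{flips} then applies to each consecutive pair and tells us that $\field[\Delta_k]/\Sigma(\Theta;\field[\Delta_k])$ has the WLP if and only if $\field[\Delta_{k+1}]/\Sigma(\Theta;\field[\Delta_{k+1}])$ does. Chaining these equivalences from $\Delta$ to $\sd_\sigma(\Delta)$ gives the desired conclusion. One must also check that all intermediate complexes are connected orientable homology manifolds without boundary of dimension $d-1$: connectivity and the manifold/boundaryless property are preserved by bistellar flips by construction, and orientability is a topological invariant preserved under the PL-homeomorphisms that bistellar flips induce, so this is routine.

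The main obstacle will be the first step: giving a clean description of the stellar subdivision as an \emph{explicit} sequence of bistellar flips and verifying that each flip has the asserted large dimension. The subtlety is that the flips must be performed inside a simplicial complex that is a genuine homology manifold at each stage, so one has to introduce the new vertex $a$ incrementally against the facets of $\lk_\Delta(\sigma)$ in an order that keeps the intermediate objects homology manifolds; the bookkeeping here is exactly what B\"ohm and Papadakis carry out in \cite[\S 4]{BP} for spheres, and since their argument is combinatorial and local (it only ever inspects links of faces, which are spheres or balls in a homology manifold just as in a homology sphere) it transfers verbatim to the manifold setting. I would therefore structure the proof so that the homology-manifold bookkeeping is quoted from \cite{BP} and only the final chaining of WLP-equivalences via Theorem \ref{flips} is spelled out.
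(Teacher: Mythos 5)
Your approach is genuinely different from the paper's, so let me first describe the paper's route before explaining where yours runs into trouble. The paper applies Lemma~\ref{exact} twice: once with the homology ball $\Gamma_1=\st_\Delta(\sigma)$ inside $\Delta$, and once with the ball $\Gamma_2=\overline{a}\ast\partial\overline{\sigma}\ast\lk_\Delta(\sigma)$ inside $\sd_\sigma(\Delta)$, both having the same complement $D$. By Lemma~\ref{snake} the task then reduces to showing that the multiplication $\times w$ on $\FF[\Gamma_i]/\Theta\FF[\Gamma_i]$ in degree $\lfloor d/2\rfloor$ is bijective. For $\Gamma_1$ this is trivial because $h_j(\st_\Delta(\sigma))=h_j(\lk_\Delta(\sigma))=0$ once $j\geq d-|\sigma|+1$; for $\Gamma_2$ the paper invokes that $\partial\overline{\sigma}\ast\lk_\Delta(\sigma)$ is a $(d-|\sigma|)$-stacked homology sphere, hence has the WLP by \cite[Corollary 6.3]{Sw} and satisfies $h_{\lfloor d/2\rfloor}=h_{\lfloor d/2\rfloor+1}$ by \cite[Theorem 2]{MW}. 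This works for arbitrary $\FF$-homology manifolds and needs no shelling of the link.

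Your route --- decomposing the stellar subdivision into a chain of bistellar flips and citing Theorem~\ref{flips} at each step --- has a genuine gap at the very first move. The decomposition of $\sd_\sigma(\Delta)$ into bistellar flips is not a ``standard fact'' in the generality you need. The natural procedure (introduce $a$ by a $0$-flip at one facet of $\st_\Delta(\sigma)$, then sweep $a$ across the remaining facets) requires that at each stage the relevant induced subcomplex be exactly of the form $\overline{A}\ast\partial\overline{B}$; when you work through this you find that the sweep must follow a \emph{shelling order} of $\lk_\Delta(\sigma)$. For an $\FF$-homology manifold the link of a face is merely an $\FF$-homology sphere, which need not be shellable (indeed it need not be PL), so the sweep can get stuck, and no valid flip sequence is guaranteed. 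Pachner's theorem relates stellar and bistellar moves as generators of the same equivalence relation on PL manifolds but does not produce an explicit flip decomposition of a single stellar subdivision with controlled types, and I do not believe B\"ohm--Papadakis proceed via such a decomposition either.

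Your numerology is also off, though it happens not to be fatal in the cases where the decomposition does exist. When the link is shellable, the facet-by-facet flips have type $q$ bounded \emph{above} by $\operatorname{codim}\sigma = d-|\sigma|$ (one flip per shelling step, with $q$ equal to the size of the restriction set). You claim instead that $q-1\geq\dim\sigma$, which is essentially the description of the \emph{inverse} flips (from $\sd_\sigma(\Delta)$ back to $\Delta$), whose types are $d-1-q\geq\dim\sigma$. Both regimes do avoid the forbidden middle once $\dim\sigma>d/2$, but as written the bound is stated incorrectly and in the wrong direction. Even if one corrects the bound, the shellability issue in the previous paragraph remains the fundamental obstruction; the paper's proof is structured precisely so that it never needs to traverse $\lk_\Delta(\sigma)$ facet by facet.
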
 

The structure of the proofs of both theorems is similar to the proof of \cite[Theorem 3.1]{Sw}. The new key ingredient is given by the following lemma.
Recall that $\Delta$ is an {\em $\FF$-homology $(d-1)$-sphere} if $\Delta$ is an $\FF$-homology manifold whose homology over $\FF$ coincides with that of $\mathbb{S}^{d-1}$. Similarly, $\Delta$ is an {\em $\FF$-homology ball} if (i) $\Delta$ is an $\FF$-homology manifold with boundary, (ii) the homology of $\Delta$ (over $\FF$) vanishes, and (iii) the boundary of $\Delta$ is an $\FF$-homology sphere.

\begin{lemma} \label{exact} Let $\Delta$ be a  $(d-1)$-dimensional $\FF$-homology manifold without boundary, $\Gamma$ a full-dimensional subcomplex of $\Delta$, and $\Theta$  an l.s.o.p.\ for $\FF[\Delta]$. Assume further that $\Gamma$ is an $\FF$-homology ball, and let $D$ be the simplicial complex obtained from $\Delta$ by removing the interior faces of $\Gamma$. Then $D$ is an $\FF$-homology manifold with boundary and the natural surjection $\FF[\Delta] \to \FF[\Gamma]$ induces the following short exact sequence
\[
0 \to \FF[D,\partial D]/\Sigma(\Theta;\FF[D,\partial D]) \to \FF[\Delta]/
\Sigma(\Theta; \FF[\Delta]) \to \FF[\Gamma]/\big(\Theta \FF[\Gamma]\big) \to 0.
\]
\end{lemma}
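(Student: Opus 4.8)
The plan is to first establish the topological claim that $D$ is an $\FF$-homology manifold with boundary, and then build the short exact sequence algebraically. For the topological part, I would argue that removing the interior faces of a full-dimensional homology ball $\Gamma\subseteq\Delta$ does not disturb the link condition: the link of a face not in $\Gamma$ is unchanged, the link of an interior face of $\Gamma$ disappears along with the face, and for a face $\tau$ on $\partial\Gamma$ one checks via a Mayer--Vietoris / excision argument that $\lk_D(\tau)$ has the homology of a ball whose boundary sphere is $\lk_{\partial\Gamma}(\tau)$; moreover $\partial D=(\partial\Delta)\cup(\partial\Gamma)=\partial\Gamma$ (since $\Delta$ has empty boundary), which is a homology sphere, hence $\partial D$ is a homology manifold without boundary. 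This gives that $(D,\partial D)$ is a Buchsbaum pair via Schenzel's criterion, Theorem \ref{3.1}.

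Next, on the level of modules, the natural surjection $\FF[\Delta]\to\FF[\Gamma]$ (induced by the inclusion $I_\Delta\subseteq I_\Gamma$) has kernel $I_\Gamma/I_\Delta=\FF[\Delta,\Gamma]$. The key combinatorial identity I would use is that the faces of $\Delta$ not in $\Gamma$ are exactly the faces of $D$ not in $\partial D$: a face of $\Delta$ is killed in $\FF[\Gamma]$ precisely when it lies outside $\Gamma$, and the faces of $\Delta$ lying outside $\Gamma$ together with those on $\partial\Gamma$ are exactly the faces of $D$, with $\partial\Gamma=\partial D$ being the ones retained. Hence $\FF[\Delta,\Gamma]\cong\FF[D,\partial D]$ as $\FF$-vector spaces graded by squarefree monomials, and one checks this identification is $S$-linear since both are subquotients of $S$ cut out by the same monomials. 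This yields the short exact sequence of $S$-modules
\[
0 \to \FF[D,\partial D] \to \FF[\Delta] \to \FF[\Gamma] \to 0.
\]

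It remains to pass to the $\Sigma(\Theta;-)$ quotients. Since $\Gamma$ is an $\FF$-homology ball, it is Cohen--Macaulay, so $\Sigma(\Theta;\FF[\Gamma])=\Theta\FF[\Gamma]$; this explains why the right-hand term is $\FF[\Gamma]/\Theta\FF[\Gamma]$ rather than a $\Sigma$-quotient. Applying $-\otimes_S S/(\Theta)$ to the short exact sequence and using that $\FF[\Gamma]$ has $\Theta$ as a regular sequence (Cohen--Macaulayness again, so $\mathrm{Tor}_1^S(S/(\Theta),\FF[\Gamma])=0$), I get a short exact sequence
\[
0 \to \FF[D,\partial D]/\Theta\FF[D,\partial D] \to \FF[\Delta]/\Theta\FF[\Delta] \to \FF[\Gamma]/\Theta\FF[\Gamma] \to 0.
\]
Now I would pass to the further quotient by the images of $\Sigma(\Theta;-)$. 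Because $\Theta$ is already a regular sequence on $\FF[\Gamma]$, no further quotienting happens on the right. On the left, I need that the map $\FF[D,\partial D]/\Theta \to \FF[\Delta]/\Theta$ carries $\Sigma(\Theta;\FF[D,\partial D])/\Theta$ isomorphically onto the kernel of $\FF[\Delta]/\Theta \to \FF[\Delta]/\Sigma(\Theta;\FF[\Delta])$ intersected with the image. The cleanest route is Theorem \ref{2.3}(i): $\Sigma(\Theta;M)/\Theta M\cong\bigoplus_{C\subsetneq[d]}H^{|C|}_\mideal(M)(-|C|)$, together with the long exact sequence in local cohomology associated to $0\to\FF[D,\partial D]\to\FF[\Delta]\to\FF[\Gamma]\to0$. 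Since $\FF[\Gamma]$ is Cohen--Macaulay of dimension $d$, $H^i_\mideal(\FF[\Gamma])=0$ for $i<d$, so the connecting maps give $H^i_\mideal(\FF[D,\partial D])\cong H^i_\mideal(\FF[\Delta])$ for all $i\le d-1$ and an exact sequence $0\to H^{d-1}_\mideal(\FF[\Delta])=0\to H^d_\mideal(\FF[D,\partial D])\to H^d_\mideal(\FF[\Delta])\to H^d_\mideal(\FF[\Gamma])\to 0$; in particular the lower local cohomologies (which are what $\Sigma(\Theta;-)/\Theta$ sees, by Theorem \ref{2.3}(i)) agree. Combining this with a diagram chase on the three-term sequence above gives exactly the asserted short exact sequence
\[
0 \to \FF[D,\partial D]/\Sigma(\Theta;\FF[D,\partial D]) \to \FF[\Delta]/\Sigma(\Theta;\FF[\Delta]) \to \FF[\Gamma]/\Theta\FF[\Gamma] \to 0.
\]

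The main obstacle I anticipate is not the homological bookkeeping but the first step: verifying carefully that $D$ is a homology manifold with boundary and identifying $\partial D$ with $\partial\Gamma$, i.e. checking the link conditions for faces lying on $\partial\Gamma$. This requires a localized excision argument (computing $\widetilde H_*(\lk_D(\tau))$ from $\widetilde H_*(\lk_\Delta(\tau))$, $\widetilde H_*(\lk_\Gamma(\tau))$, and $\widetilde H_*(\lk_{\partial\Gamma}(\tau))$ via Mayer--Vietoris), and one must be attentive to the edge cases $\tau=\emptyset$ and $\tau$ a facet of $\partial\Gamma$. Everything downstream — the module identification, flatness over $S/(\Theta)$ for the Cohen--Macaulay piece, and the $\Sigma(\Theta;-)$ comparison via local cohomology — is then routine given the results already available in the excerpt.
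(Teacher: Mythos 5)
Your proof is correct, and it reaches the result by a route that differs in its last step from the paper's. Both arguments share the same topological preliminaries (Mayer--Vietoris for $D$, $\partial D = \partial\Gamma$, the identification $\FF[D,\partial D]=\FF[\Delta,\Gamma]$ as $S$-modules, and Cohen--Macaulayness of $\FF[\Gamma]$), and both ultimately hinge on a dimension count coming out of Theorem~\ref{2.3}(i). Where you diverge is in how the kernel of $\phi$ is pinned down: the paper computes $\Ker\phi=(I+\Theta R)/\Sigma(\Theta;R)=I/(I\cap\Sigma(\Theta;R))$ directly as ideals of $R=\FF[\Delta]$, observes $\Sigma(\Theta;I)\subseteq I\cap\Sigma(\Theta;R)$ to get a surjection $I/\Sigma(\Theta;I)\twoheadrightarrow\Ker\phi$, and then matches dimensions via Theorem~\ref{3.3} and the combinatorial identities $h_j(D,\partial D)=h_j(\Delta)-h_j(\Gamma)$, $\tilde\beta_i(D,\partial D)=\tilde\beta_i(\Delta)$. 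You instead tensor the module short exact sequence by $S/(\Theta)$, invoke $\mathrm{Tor}_1$-vanishing over the Cohen--Macaulay piece to keep left-exactness, and then show the restricted map $\Sigma(\Theta;N)/\Theta N\to\Sigma(\Theta;R)/\Theta R$ is an isomorphism (injective because $N/\Theta N\hookrightarrow R/\Theta R$, and the local cohomology long exact sequence together with Theorem~\ref{2.3}(i) gives equality of dimensions, since $H^i_\mideal(\FF[\Gamma])=0$ for $i<d$); the snake lemma then yields the claimed sequence. The two finishes are equivalent in content --- Theorem~\ref{3.3} is exactly the bridge between $h''$-numbers and the local cohomology data you use --- but yours stays inside local cohomology and avoids the explicit $h''$-number bookkeeping, at the modest cost of having to make the ``diagram chase'' in your last paragraph into an honest snake-lemma argument with the injectivity and dimension-matching of $\Sigma(\Theta;N)/\Theta N\to\Sigma(\Theta;R)/\Theta R$ stated explicitly. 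One small point worth spelling out if you write this up: you should note that $\Theta$ restricts to an l.s.o.p.\ for $\FF[\Gamma]$ precisely because $\Gamma$ is a full-dimensional subcomplex of $\Delta$, as this is what makes $\Theta$ a regular sequence on $\FF[\Gamma]$.
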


\begin{proof}
The fact that $D$ is a homology manifold with boundary follows by a standard Mayer-Vietoris argument (note that $\partial D=\partial\Gamma$ is a homology $(d-2)$-sphere).
Furthermore, by excision and since $\Gamma$ has vanishing homology,
\begin{equation} \label{Betti}
\tilde{\beta}_i(D, \partial D)=\tilde{\beta}_i(\Delta, \Gamma)=\tilde{\beta}_i(\Delta) \quad \mbox{for all $i$}.
\end{equation}

Now, using that the homology ball $\Gamma$ is a full-dimensional subcomplex of $\Delta$, we conclude as in the proof of Theorem \ref{3.7} that there is a natural surjection 
\begin{equation} \label{surjection}
\phi: \FF[\Delta]/\Sigma\big(\Theta;\FF[\Delta]\big) \to \FF[\Gamma]/ \big(\Theta \FF[\Gamma]\big)= \FF[\Gamma]/\Sigma\big(\Theta;\FF[\Gamma]\big).
\end{equation}
 Thus, to finish the proof of the lemma, it suffices to show that the kernel of $\phi$ is $\FF[D,\partial D]/\Sigma\big(\Theta; \FF[D,\partial D]\big)$. This, in turn, would follow if we verify the following two facts:
\begin{enumerate}
\item[(i)]  $\Ker(\phi)$ is a quotient of $\FF[D,\partial D]/\Sigma\big(\Theta; \FF[D,\partial D]\big)$, and
\item[(ii)]  $\dim_{\FF}(\Ker(\phi))_j= \dim_{\FF}\big(\FF[D,\partial D]/\Sigma\big(\Theta; \FF[D,\partial D]\big)\big)_j$ for all $j$.
\end{enumerate}

For brevity, let $R=\FF[\Delta]$ and let $I=\FF[D,\partial D]$. {Since $(D,\partial D)=(\Delta,\Gamma)$ as relative simplicial complexes, it follows that} $I$ is an ideal of $R$, and $R/I$ is $\FF[\Gamma]$. Thus our surjection $\phi$ is the projection $\phi:  R/\Sigma(\Theta;R) \to R/(I+\Theta R)$. Hence 
\[ \Ker(\phi)=
(I+\Theta R)/\Sigma(\Theta;R) = (I+ \Sigma(\Theta;R))/\Sigma(\Theta;R) = I/(I \cap \Sigma(\Theta;R)),
\]
which together with an observation that $I \cap \Sigma(\Theta;R)$ contains $\Sigma(\Theta;I)$ yields assertion (i). To see  that $I \cap \Sigma(\Theta;R)\supseteq \Sigma(\Theta;I)$, note that since $I$ is a subset of $R$, $\Theta I$ is contained in $\Theta R$, and $(\theta_1,...,\hat \theta_i,\dots,\theta_d)I :_I \theta_i$  is contained in $(\theta_1,...,\hat \theta_i,\dots,\theta_d)R :_R \theta_i$ for all $i\in[d]$; therefore $\Sigma(\Theta;R) \supseteq \Sigma(\Theta;I)$.
 
As for assertion (ii), the dimension of the kernel of $\phi$ can be computed as follows: by \eqref{surjection} and by Theorem \ref{3.3}, 
\begin{eqnarray} 
\dim_{\FF}(\Ker(\phi))_j = h''_j(\Delta)-h_j(\Gamma)
\label{Kernel}
\end{eqnarray}
Now, since each face of $\Delta$ is either a face of $(D,\partial D)$ or a face of $\Gamma$ (but not of both), $f_i(\Delta)=f_i(D,\partial D)+ f_i(\Gamma)$ for all $i\geq -1$, and so 
\begin{equation} \label{h's}
h_j(D,\partial D)=h_j(\Delta)-h_j(\Gamma) \quad \mbox{for all $j\geq 0$}.
\end{equation}
 Equations \eqref{Betti}, \eqref{Kernel} and \eqref{h's} yield
\[
\dim_{\FF}(\Ker(\phi))_j =h''_j(D, \partial D)= \dim_{\FF}\big(\FF[D,\partial D]/\Sigma\big(\Theta; \FF[D,\partial D]\big)\big)_j \quad \mbox{for all $j\geq 0$},
\]
where the last equality is another application of Theorem \ref{3.3}. The assertion follows.\end{proof}

Another ingredient needed for both proofs is the following  immediate consequence of the snake lemma.

\begin{lemma} \label{snake}
Let $0\to L \to N \to M\to 0$ be an exact sequence of graded $S$-modules, let $\omega\in S$ be a linear form, and let $k$ be a fixed integer. Assume also that the map  $\times \omega: M_k\to M_{k+1}$ is bijective. Then the map $\times \omega: N_k\to N_{k+1}$ is surjective if and only if the map $\times \omega: L_k\to L_{k+1}$ is surjective.
\end{lemma}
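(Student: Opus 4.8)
The plan is to prove Lemma \ref{snake} by a direct diagram chase, essentially a special case of the snake lemma. First I would set up the commutative diagram with exact rows
\[
\begin{array}{ccccccccc}
0 & \to & L_k & \to & N_k & \to & M_k & \to & 0 \\
 & & \downarrow{\scriptstyle \times\omega} & & \downarrow{\scriptstyle \times\omega} & & \downarrow{\scriptstyle \times\omega} & & \\
0 & \to & L_{k+1} & \to & N_{k+1} & \to & M_{k+1} & \to & 0,
\end{array}
\]
where the rows are exact because taking graded pieces is exact. The snake lemma gives an exact sequence
\[
0 \to \Ker(\times\omega|_{L_k}) \to \Ker(\times\omega|_{N_k}) \to \Ker(\times\omega|_{M_k}) \to \Coker(\times\omega|_{L_k}) \to \Coker(\times\omega|_{N_k}) \to \Coker(\times\omega|_{M_k}) \to 0,
\]
and the hypothesis that $\times\omega: M_k \to M_{k+1}$ is bijective forces both $\Ker(\times\omega|_{M_k})=0$ and $\Coker(\times\omega|_{M_k})=0$.

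From this the connecting map vanishes, so the tail of the snake sequence reduces to
\[
0 \to \Coker(\times\omega|_{L_k}) \to \Coker(\times\omega|_{N_k}) \to 0,
\]
i.e.\ $\Coker(\times\omega|_{L_k}) \cong \Coker(\times\omega|_{N_k})$. Hence $\times\omega: N_k \to N_{k+1}$ is surjective (cokernel zero) if and only if $\times\omega: L_k \to L_{k+1}$ is surjective, which is exactly the assertion.

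Since the lemma is called an ``immediate consequence of the snake lemma,'' I expect no genuine obstacle; the only mild care needed is to confirm that the short exact sequence of graded $S$-modules restricts to a short exact sequence of $\FF$-vector spaces in each degree (true, as exactness is checked degreewise) so that the classical snake lemma applies verbatim. Alternatively, one could avoid invoking the full snake lemma and argue directly: given $n \in N_{k+1}$, its image $\bar n \in M_{k+1}$ is $\omega m$ for some $m \in M_k$ by surjectivity on $M$; lift $m$ to $n' \in N_k$, so that $n - \omega n'$ maps to $0$ in $M_{k+1}$ and thus lies in $L_{k+1}$; then $n$ is hit from $N_k$ precisely when $n - \omega n'$ is hit from $L_k$, and combined with bijectivity (not just surjectivity) of $\times\omega$ on $M_k$ one checks this is well-defined independently of the choices, giving the equivalence. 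Either route is short; I would present the snake-lemma version for brevity.
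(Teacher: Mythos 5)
Your proof is correct and is precisely the approach the paper has in mind: the paper gives no proof, labeling the lemma an immediate consequence of the snake lemma, and your diagram chase via the snake lemma (using that bijectivity on $M$ kills both the kernel and cokernel terms, yielding $\mathrm{Coker}(\times\omega|_{L_k}) \cong \mathrm{Coker}(\times\omega|_{N_k})$) is exactly that.
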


We are now ready to prove both of the theorems.

\begin{proof}[Proof of Theorem \ref{flips}]
 We are given that $\Delta'=\big(\Delta \setminus (\overline{A}\ast\partial\overline{B})\big) \cup \big(\partial \overline{A}\ast \overline{B}\big)$, where $|B|=p$.
Let $D$ be the simplicial complex obtained from $\Delta$ by removing the interior faces of the ball $\Gamma_1=\overline{A}\ast\partial\overline{B}$; equivalently, $D$ is obtained from $\Delta'$ by removing the interior faces of  $\Gamma_2=\partial \overline{A}\ast \overline{B}$.
Since $\Gamma_1$ and $\Gamma_2$ are full-dimensional subcomplexes of $\Delta$ and $\Delta'$, and $\Theta$ is an l.s.o.p.\ for both $\FF[\Delta]$ and $\FF[\Delta']$, it is also an l.s.o.p.~for both $\FF[\Gamma_1]$ and $\FF[\Gamma_2]$.
Hence
$$\dim_\FF (\FF[\Gamma_1]/\Theta \FF[\Gamma_1])_j=h_j(\overline{A}\ast \partial\overline{B})=h_j(\partial\overline{B})=\left\{\begin{array}{ll} 1, & \mbox{ if $j \leq p-1,$}\\ 0, & \mbox{ if $j > p-1,$} \end{array}\right.$$
which implies that $\FF[\Gamma_1]/\Theta \FF[\Gamma_1] \cong \FF[x]/(x^p)$.
Similarly, $\FF[\Gamma_2]/\Theta \FF[\Gamma_2] \cong \FF[x]/(x^{d-p+1})$.
These isomorphisms together with the assumption $p \not \in \{ \frac d 2, \frac {d+1} 2, \frac {d+2} 2\}$
yield that,
for a generic choice of a linear form $w$,
the multiplication map
$$\times w : (\FF[\Gamma_i]/\Theta \FF[\Gamma_i])_{\lfloor \frac d 2 \rfloor}
\to (\FF[\Gamma_i]/\Theta \FF[\Gamma_i])_{\lfloor \frac d 2 \rfloor +1}$$
is bijective for $i \in \{1,2\}$.

Applying Lemma \ref{exact} to $\Delta$ and $\Gamma_1$, we conclude from Lemma \ref{snake} that
$\FF[\Delta]/\Sigma(\Theta;\FF[\Delta])$ has the WLP if and only if, for a generic linear form $w$, the multiplication
$$\times w : \big(\FF[D,\partial D]/\Sigma(\Theta;\FF[D,\partial D]) \big)_{\lfloor \frac d 2 \rfloor} 
\to \big(\FF[D,\partial D]/\Sigma(\Theta;\FF[D,\partial D])\big)_{\lfloor \frac d 2 \rfloor+1}$$
is surjective.
On the other hand, by Lemma \ref{exact} applied to $\Delta'$ and $\Gamma_2$, the latter condition is equivalent to the WLP of $\FF[\Delta']/\Sigma(\Theta;\FF[\Delta'])$. 
\end{proof}

\begin{proof}[Proof of Theorem \ref{stellar-subdiv}]
Let $D$ be the homology manifold obtained from $\Delta$ by removing the interior faces of the homology ball $\Gamma_1=\st_\Delta(\sigma)$;
equivalently, $D$ is obtained from $\sd_\sigma(\Delta)$ by removing the interior faces of the homology ball $\Gamma_2=\overline{a}\ast\partial\overline{\sigma}\ast\lk_\Delta(\sigma)$.
As the proof of Theorem \ref{flips} shows, to verify Theorem \ref{stellar-subdiv}, it suffices to check that, for a generic choice of linear forms $\Theta=\theta_1,\dots,\theta_d$ and another generic linear form $w$,
the multiplication
\begin{align}
\label{bijection}
\times w : \big(\FF[\Gamma_i]/\Theta \FF[\Gamma_i]\big)_{\lfloor \frac d 2 \rfloor}
\to \big(\FF[\Gamma_i]/\Theta \FF[\Gamma_i]\big)_{\lfloor \frac d 2 \rfloor +1}
\end{align}
is bijective for each $i \in \{1,2\}$.

In the case of $i=1$, the desired bijection is an immediate consequence of the fact that
$$\dim_\FF (\FF[\Gamma_1]/\Theta \FF[\Gamma_1])_j=h_j(\st_\Delta(\sigma))=h_j(\lk_\Delta(\sigma))=0 \quad
\mbox{for }  j \geq d - |\sigma|+1$$
and the assumption $|\sigma| > \frac d 2 +1$.
We now prove that the map in \eqref{bijection} is also bijective for $i=2$.
To this end, observe that the homology $(d-2)$-sphere $\partial \overline \sigma * \lk_\Delta(\sigma)$ is the boundary of the homology $(d-1)$-ball $\overline \sigma * \lk_\Delta(\sigma)$, and hence
$\partial \overline \sigma * \lk_\Delta(\sigma)$ is ($d-|\sigma|)$-stacked. (A homology $(d-2)$-sphere  is called $(d-k)$-stacked if it is the boundary of a homology ball that has no interior faces of size $\leq k-1$.)
It then follows from \cite[Corollary 6.3]{Sw} that $\partial \overline \sigma * \lk_\Delta(\sigma)$ has the WLP. This, in turn, implies that the map in \eqref{bijection} is surjective as $\FF[\Gamma_2]$ is a polynomial ring over $\FF[\partial \overline \sigma * \lk_\Delta(\sigma)]$.
Also, since $|\sigma|> \frac{d}{2}+1$, we infer from \cite[Theorem 2]{MW} and the $(d-|\sigma|)$-stackedness of $\partial \overline \sigma * \lk_\Delta(\sigma)$ that
$h_{\lfloor \frac d 2 \rfloor}(\partial \overline \sigma * \lk_\Delta(\sigma))
=h_{\lfloor \frac d 2 \rfloor+1}(\partial \overline \sigma * \lk_\Delta(\sigma))$. As $\Gamma_2$ and $\partial \overline \sigma * \lk_\Delta(\sigma)$ have the same $h$-vector, we conclude that
$h_{\lfloor \frac d 2 \rfloor}(\Gamma_2)=h_{\lfloor \frac d 2 \rfloor+1}(\Gamma_2)$,
and therefore that the map in \eqref{bijection} is bijective.
\end{proof}

\smallskip
Let $\Delta$ be a triangulation of a closed surface and assume that one of the vertices of $\Delta$ is connected to all other vertices of $\Delta$. Then by  \cite[Theorem 1.6]{NS2}, $\Delta$ has the WLP. However, the problem of whether {\em every} triangulation of a closed surface other than the sphere possesses the WLP is at present wide open.       

\begin{remark} Note that 
if $\Delta$ is an arbitrary {\em odd-dimensional} 
Buchsbaum complex (e.g., a homology manifold), then there exists a simplicial complex $\Gamma$ that is PL homeomorphic to $\Delta$ and has the WLP in characteristic $0$. Simply take $\Gamma$ to be the barycentric subdivision of $\Delta$: the WLP of $\Gamma$ is guaranteed by Theorem~\ref{baryc-subdiv}.
\end{remark}


\section*{Appendix. Proof of Theorem \ref{2.3}}
\renewcommand{\thesection}{\Alph{section}}
\setcounter{section}{1}
\setcounter{theorem}{0}

The goal of this Appendix is to verify Theorem \ref{2.3}.
Our proof is based on the proof of \cite[Theorem 2.2]{NS}, and so some details are omitted. Let $M$ be a finitely generated Buchsbaum graded $S$-module of Krull dimension $d$,
and let $\Theta=\theta_1,\dots,\theta_d$ be an h.s.o.p.\ for $M$ with $\deg \theta_i=\ddd_i$.
We write $\ddd_C=\sum_{i \in C} \ddd_i$ for $C \subseteq [d]$.
We need the following result (see \cite[Lemma II.4.14$'$]{SV}).

\begin{lemma}
\label{A.1} There is an isomorphism
$$H_\mideal^i(M/((\theta_1,\dots,\theta_j)M))\cong \bigoplus_{C \subseteq [j]} H_\mideal^{|C|+i}(M)(-\ddd_C) \quad
\mbox{for all $i,j$ with $i+j <d$}.$$
\end{lemma}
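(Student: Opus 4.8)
The statement to prove is Lemma \ref{A.1}: an isomorphism
$$H_\mideal^i\big(M/((\theta_1,\dots,\theta_j)M)\big)\cong \bigoplus_{C \subseteq [j]} H_\mideal^{|C|+i}(M)(-\ddd_C)$$
for all $i,j$ with $i+j<d$, where $M$ is a finitely generated Buchsbaum graded $S$-module of Krull dimension $d$ and $\Theta=\theta_1,\dots,\theta_d$ is an h.s.o.p.\ for $M$ with $\deg\theta_i=\ddd_i$.

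\bigskip

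The plan is to induct on $j$, the number of parameters quotiented out. The base case $j=0$ is the trivial identity $H_\mideal^i(M)\cong H_\mideal^i(M)$. For the inductive step, write $M_j := M/((\theta_1,\dots,\theta_j)M)$, so $M_0=M$ and $M_{j+1}=M_j/\theta_{j+1}M_j$. The crucial structural input is that $M_j$ inherits good homological behavior from $M$: since $M$ is Buchsbaum and $\theta_1,\dots,\theta_j$ is (part of) an h.s.o.p., hence a weak $M$-sequence, the module $M_j$ is again Buchsbaum of Krull dimension $d-j$, and moreover $\theta_{j+1},\dots,\theta_d$ is an h.s.o.p.\ for $M_j$. (These facts are standard for Buchsbaum modules; see \cite{SV}.) In particular, for $i<d-j$ the local cohomology $H_\mideal^i(M_j)$ is annihilated by $\mideal$, i.e.\ it is a finite-dimensional $\field$-vector space concentrated in the appropriate degrees, and multiplication by the linear/homogeneous element $\theta_{j+1}$ acts as zero on $H_\mideal^i(M_j)$ for every $i<d-j$.

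\bigskip

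With this in hand, the inductive step runs as follows. From the short exact sequence
$$0 \to (0:_{M_j}\theta_{j+1}) \to M_j \xrightarrow{\ \theta_{j+1}\ } M_j(\ddd_{j+1}) \to M_{j+1}(\ddd_{j+1}) \to 0,$$
or more conveniently by splitting it into two short exact sequences through the image, one obtains the long exact sequence in local cohomology relating $H_\mideal^\bullet(M_j)$ and $H_\mideal^\bullet(M_{j+1})$. Because $\theta_{j+1}$ kills $H_\mideal^i(M_j)$ for all $i<d-j$ (weak-sequence/Buchsbaum property), the connecting maps degenerate: for $i+(j+1)<d$, i.e.\ $i<d-j-1$, the long exact sequence breaks into split short exact sequences
$$0 \to H_\mideal^i(M_j) \to H_\mideal^i(M_{j+1})(\ddd_{j+1}) \to H_\mideal^{i+1}(M_j) \to 0,$$
giving (after accounting for the degree shift) $H_\mideal^i(M_{j+1}) \cong H_\mideal^i(M_j) \oplus H_\mideal^{i+1}(M_j)(-\ddd_{j+1})$. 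One must check that this sequence genuinely splits as graded modules; this follows because all the modules involved are $\mideal$-torsion and in fact $\mideal$-annihilated (hence semisimple as graded modules, being direct sums of copies of $\field(-a)$'s), so every short exact sequence of them splits. Substituting the inductive hypothesis $H_\mideal^{i'}(M_j)\cong\bigoplus_{C\subseteq[j]}H_\mideal^{|C|+i'}(M)(-\ddd_C)$ for $i'=i$ and $i'=i+1$, and combining the two families of subsets of $[j]$ — those not containing $j+1$ (from the first summand) and those containing $j+1$ (from the second summand, whose shift $-\ddd_{j+1}$ combines with $-\ddd_C$ to give $-\ddd_{C\cup\{j+1\}}$) — yields exactly $\bigoplus_{C\subseteq[j+1]}H_\mideal^{|C|+i}(M)(-\ddd_C)$, completing the induction.

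\bigskip

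The main obstacle, and the point needing the most care, is the degeneration of the connecting homomorphisms and the splitting: one needs that $\theta_{j+1}$ annihilates $H_\mideal^i(M_j)$ in the relevant range, and that the resulting exact sequences of $\mideal$-torsion graded modules split. The first is precisely where the Buchsbaum hypothesis is used (via $M_j$ being Buchsbaum and $\theta_{j+1},\dots,\theta_d$ being an h.s.o.p.\ for it, so that $\theta_{j+1}$ is the start of a weak $M_j$-sequence), and one must verify the range of validity carefully: the isomorphism for $M_{j+1}$ is only claimed for $i<d-(j+1)$, and it is exactly in this range that $H_\mideal^i(M_j)$ and $H_\mideal^{i+1}(M_j)$ are both $\mideal$-annihilated (note $i+1\le d-j-1<d-j$). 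Since this argument closely parallels the proof of \cite[Lemma II.4.14$'$]{SV} and of \cite[Theorem 2.2]{NS}, the remaining details are routine and can be omitted.
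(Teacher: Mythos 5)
Your proposal is a correct and self-contained proof of Lemma~\ref{A.1}. The paper itself does not prove this lemma — it simply cites \cite[Lemma~II.4.14$'$]{SV} — so strictly speaking there is no "paper proof" to compare against; but your inductive argument via the long exact sequence of local cohomology and the weak-sequence property is precisely the standard route (and is presumably how the result is obtained in the reference). The key ingredients you identify are the right ones: $(0:_{M_j}\theta_{j+1})=H^0_\mideal(M_j)$ (from the weak $M$-sequence/Buchsbaum property), $\mideal\cdot H^i_\mideal(M_j)=0$ for $i<d-j$ (which, incidentally, you could also obtain directly from the inductive hypothesis plus $\mideal H^k_\mideal(M)=0$ for $k<d$, without needing to quote Buchsbaumness of $M_j$), and the automatic splitting because everything in the relevant range is a graded $\field$-vector space.

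One small slip: the intermediate short exact sequence you display has a missing degree shift. From the long exact sequence one actually obtains
\[
0 \to H_\mideal^i(M_j)(\ddd_{j+1}) \to H_\mideal^i(M_{j+1})(\ddd_{j+1}) \to H_\mideal^{i+1}(M_j) \to 0,
\]
not $0 \to H_\mideal^i(M_j) \to H_\mideal^i(M_{j+1})(\ddd_{j+1}) \to H_\mideal^{i+1}(M_j) \to 0$ as you wrote; the left-hand term also carries the $(\ddd_{j+1})$ shift, since it comes from $H^i_\mideal\big((M_j/H^0_\mideal(M_j))(-\ddd_{j+1})\big)$ only after identifying $H^i_\mideal(M_j/H^0_\mideal(M_j))\cong H^i_\mideal(M_j)$ and reading off the shift from the source of the $\theta_{j+1}$-map. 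With the shift corrected, unshifting by $(-\ddd_{j+1})$ gives exactly $H^i_\mideal(M_{j+1})\cong H^i_\mideal(M_j)\oplus H^{i+1}_\mideal(M_j)(-\ddd_{j+1})$, which is the conclusion you state and use in the subset bookkeeping — so the final argument is right, only the displayed sequence has the shift in the wrong place. The range check ($i<d-j-1$ guarantees both $H^i_\mideal(M_j)$ and $H^{i+1}_\mideal(M_j)$ are covered by the inductive hypothesis and are $\mideal$-annihilated) is handled correctly.
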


We use the following notation: if $C \subseteq [d]$, then $M\langle C\rangle$ is defined by
$$M\langle C\rangle = M/\big((\theta_i: i \not \in C)M\big).$$
By \cite[Proposition I.2.1]{SV}, if 
 $C \subsetneq [d]$ and $s \in [d]\setminus C$,
then $H_\mideal^0(M\langle C \cup \{s\}\rangle ) = 0:_{M\langle C \cup \{s\}\rangle} \theta_s$. This leads to the following short exact sequence
\begin{align}
\label{short}
0 \longrightarrow M\langle C\cup\{s\}\rangle/H_\mideal^0( M\langle C\cup\{s\}\rangle) (-\ddd_s) \stackrel{\times \theta_s\ }  \longrightarrow M\langle C \cup \{s\}\rangle \stackrel{\pi_s} \longrightarrow M\langle C\rangle \longrightarrow 0,
\end{align}
where $\pi_s$ is a natural projection.
This short exact sequence gives rise to exact sequences
\begin{align}
\label{A-1}
0 \longrightarrow H_\mideal^k (M\langle C\cup\{s\}\rangle) \stackrel {\pi^*_s} \longrightarrow H_\mideal^k(M\langle C\rangle ) \stackrel {\varphi_s^*} \longrightarrow H_\mideal^{k+1} (M\langle C\cup\{s\}\rangle) (-\ddd_s) \longrightarrow 0
\end{align}
for $k< |C|$, and
\begin{align}
\label{A-2}
0 \longrightarrow H_\mideal^{|C|} (M\langle C\cup\{s\}\rangle) \stackrel {\pi^*_s} \longrightarrow H_\mideal^{|C|} (M\langle C\rangle ) \stackrel {\varphi_s^*} \longrightarrow H_\mideal^{|C|+1} (M\langle C\cup\{s\}\rangle) (-\ddd_s),
\end{align}
where we denote by $\varphi_s^*$ the connecting homomorphism.

Similarly, if $C \subsetneq [d]$, $|C| \leq d-2$, and  $s,t \in [d]\setminus C$,
we obtain the following commutative diagram
{\small
\begin{align*}
\begin{array}{cccccccccc}
0  &\to& M\langle C\cup \{s\}\rangle /H_\mideal^0(M\langle C\cup \{s\}\rangle){(-\ddd_s)}  &\stackrel {\times \theta_s} \to& M\langle C\cup \{s\}\rangle &\stackrel{\pi_s} \to & M\langle C\rangle & \to 0\\
& & \uparrow\pi_t & & \uparrow \pi_t & & \uparrow  \pi _t\\
0  &\to& M\langle C\cup\{s,t\}\rangle /H_\mideal^0(M\langle C\cup\{s,t\}\rangle){(-\ddd_s)}  &\stackrel {\times \theta_s} \to& M\langle C\cup\{s,t\}\rangle &\stackrel{\pi_s}  \to & M \langle C \cup \{t\} \rangle & \to 0.
\end{array}
\end{align*}
}This diagram, in turn, induces the commutative diagram
\begin{eqnarray}
\label{A-3}
\begin{array}{ccccccc}
 H_\mideal^i(M \langle C\rangle) & \stackrel {\varphi^*_s} \longrightarrow & H_\mideal^{i+1} (M\langle C \cup \{s\}\rangle )(-\ddd_s)\\
\uparrow \pi_t^* & & \uparrow \pi_t^*\\
H_\mideal^i(M\langle C\cup \{t\}\rangle) & \stackrel {\varphi^*_s} \longrightarrow & H_\mideal^{i+1}(M\langle C \cup \{s,t\}\rangle) (-\ddd_s).
\end{array}
\end{eqnarray}

Now, for $k=2,3,\dots,d$, define the maps $\phi_k$ and $\psi_k$ as compositions
$$\phi_k: H_\mideal^0(M\langle \emptyset\rangle) \stackrel {\varphi_1^*} \to H_\mideal^1 \big(M\big\langle [1]\big\rangle\big) (-\ddd_{[1]}) \stackrel {\varphi_2^*} \to \cdots
\stackrel {\varphi_{k-1}^*} \to H_\mideal^{k-1} \big(M\big\langle [k-1]\big\rangle\big) (-\ddd_{[k-1]})
$$
and
{\small
$$\psi_k: H_\mideal^0(M\langle\{k\}\rangle) \stackrel {\varphi_1^*} \to H_\mideal^1 \big(M\big\langle [1] \cup \{k\} \big\rangle\big) (-\ddd_{[1]}) \stackrel {\varphi_2^*} \to \cdots
\stackrel {\varphi_{k-1}^*} \to H_\mideal^{k-1} \big(M\big\langle[k]\big\rangle\big) (-\ddd_{[k-1]}).
$$
}
Then the commutativity of \eqref{A-3} implies the commutativity of
\begin{eqnarray}
\label{A-4}
\begin{array}{ccccccc}
H_\mideal^0 (M\langle\emptyset\rangle) & \stackrel {\phi_k} \longrightarrow & H_\mideal^{k-1}\left(M\big\langle[k-1]\big\rangle\right) (-\ddd_{[k-1]})\medskip\\
\uparrow \pi_k^* & & \uparrow \pi_k^*\medskip\\
H_\mideal^0(M\langle\{k\}\rangle)  & \stackrel {\psi_k} \longrightarrow & 
H_\mideal^{k-1}\left(M\big\langle [k]\big\rangle\right)(-\ddd_{[k-1]}).
\end{array}
\end{eqnarray}

\bigskip

To prove part (ii) of Theorem \ref{2.3}, we consider the following diagram
\begin{eqnarray*}
\begin{array}{cccclcc}
& &H_\mideal^0(M\langle\{1\}\rangle) & \stackrel{\pi^*_1} \longrightarrow &H_\mideal^0(M\langle\emptyset\rangle)\smallskip\\
& & & & \hspace{30pt}\downarrow \varphi_1^*\smallskip\\
& H_\mideal^0\big(M\langle\{2\}\rangle\big) \stackrel {\psi_2} \longrightarrow &H_\mideal^1 \big(M\big\langle[2]\big\rangle\big) (-\ddd_{[1]}) & \stackrel{\pi^*_2}  \longrightarrow &H_\mideal^1\big(M\big\langle[1]\big\rangle\big)(-\ddd_{[1]})\smallskip\\
& & & & \hspace{30pt}\downarrow \varphi_2^*\smallskip \\
& & & & \hspace{32pt} \vdots \smallskip \\
& & & & \hspace{30pt} \downarrow \varphi_{d-1}^*\smallskip \\
& H_\mideal^0\big(M\langle\{d\}\rangle\big) \stackrel {\psi_d} \longrightarrow &H_\mideal^{d-1} \big(M\big\langle[d]\big\rangle\big)(-\ddd_{[d-1]})  & \stackrel{\pi^*_d} \longrightarrow &H_\mideal^{{d-1}}\big(M\big\langle[d-1]\big\rangle\big) (-\ddd_{[d-1]})\smallskip\\
& & & & \hspace{30pt}\downarrow \varphi_{d}^* \medskip \\
& & &  &H_\mideal^d\left(M\big\langle[d] \big\rangle\right) (-\ddd_{[d]}).
\end{array}
\end{eqnarray*}
The surjectivity of $\varphi_s^*$ in \eqref{A-1}
implies that $\psi_k$ is surjective. Hence in each horizontal line of the diagram,
\begin{align}
\label{A-5}
\Image (\pi_k^* \circ \psi_k) = \Image (\pi_k^*).
\end{align}
 Also, since the sequence in \eqref{A-2} is exact, it follows that in the diagram,
\begin{align}
\label{A-6}
\Ker (\varphi_{k}^*) = \Image (\pi_k^*).
\end{align}

Define $\phi_{d+1}= \varphi_d^* \circ \cdots \circ\varphi_1^*$ to be the composition of the vertical maps in the diagram. By \eqref{A-2}, each $\pi_k^*$ (in the diagram) is injective, and we conclude that
\begin{align}
\label{A-7}
\Ker (\phi_{d+1}) \cong \bigoplus_{k=1}^d \Image (\pi_k^*) \cong \bigoplus_{k=1}^d H_\mideal^{k-1}\left( M\big\langle [k] \big\rangle \right) (-\ddd_{[k-1]})
\end{align}
as $\field$-vector spaces.
In addition, using \eqref{A-5} and the commutativity of \eqref{A-4}, we obtain that $\Ker( \phi_{d+1})$ is the sum of the images of
$$\pi_k^* : H_\mideal^0(M\langle\{k\}\rangle) \to H_\mideal^0(M\langle\emptyset\rangle)=M/\Theta M.$$
Finally, since 
$$H_\mideal^0(M\langle\{k\}\rangle) = \big( (\theta_1,\dots,\hat \theta_k,\dots,\theta_d)M:_M \theta_k \big) / (\theta_1,\dots,\hat \theta_k,\dots,\theta_d)M$$
and since $\pi_k^*$ is a natural projection, it follows that
$$\Ker (\phi_{d+1}) = \Sigma(\Theta;M)/\Theta M.$$
This proves part (ii) of the statement since $\phi_{d+1}$ is a map from $M/\Theta M$ to $H_\mideal^d(M) (-\ddd_{[d]})$.

It remains to verify part (i). Recall that $\Ker (\phi_{d+1})$ is the sum of images of $H_\mideal^0(M\langle\{k\}\rangle)$ and that by \cite[Proposition I.2.1]{SV}, $\mideal \cdot H_\mideal^0(M\langle\{k\}\rangle)=0$.
Hence the modules in \eqref{A-7} are in fact isomorphic as $S$-modules (since they are direct sums of copies of $\field$). Thus we infer from
\eqref{A-7} and Lemma \ref{A.1} that
\begin{align*}
\Sigma(\Theta;M) /\Theta M 
= \Ker(\phi_{d+1}) 
&\cong \bigoplus_{k=1}^d H_\mideal^{k-1} \left(M\big\langle[k]\big\rangle\right)(-\ddd_{[k-1]})\\
&\cong \bigoplus_{k=1}^d \left[ \bigoplus_{C \subseteq [d]\setminus [k]} H_\mideal^{|C|+k-1} (M) (-\ddd_{C \cup [k-1]}) \right]\\
&\cong \bigoplus_{C \subsetneq [d]} H_\mideal^{|C|} (M) (-\ddd_C),
\end{align*}
as desired.
\hfill $\square$

\begin{remark2}
As the proof in this Appendix is based on the proof of \cite[Theorem 2.2]{NS}, it is worth pointing out that there is a minor mistake in \cite{NS}.
Indeed, the short ``exact" sequence that appears three lines after the statement of Theorem 2.4 in \cite{NS} is not necessarily exact.
However, this mistake can be easily corrected by replacing this sequence with the short exact sequence of equation \eqref{short}.
\end{remark2}
\bigskip

\noindent\textbf{Acknowledgments}:
We thank Ed Swartz for raising a question of whether there is an algebraic version of Theorem \ref{MuraiNovik} and for bringing the result of  B\"ohm and Papadakis to our attention.
We are also grateful to Shiro Goto for helpful conversations.

{\small
\bibliography{BBMduality-biblio}
\bibliographystyle{plain}
}
\end{document}